\numberwithin{equation}{section}
\definecolor{purple}{rgb}{0.9,0,0.8}
\definecolor{gray}{rgb}{0.7,0.7,0.7}
\newcommand{\abbr}[1]{{\sc\lowercase{#1}}}
\newtheorem{thm}{Theorem}[section]
\newtheorem{lem}[thm]{Lemma}
\newtheorem{ppn}[thm]{Proposition}
\newtheorem{conj}[thm]{Conjecture}
\theoremstyle{definition}
\newtheorem{defn}[thm]{Definition}
\newtheorem{ex}[thm]{Example}
\newtheorem{remark}[thm]{Remark}
\newtheorem{assumption}[thm]{Assumption}
\newcommand{\beq}{\begin{equation}}
\newcommand{\eeq}{\end{equation}}
\newcommand{\bbeta}{\beta}
\newcommand{\ep}{\epsilon}
\newcommand{\bpi}{{\boldsymbol \pi}}
\newcommand{\B}{\mathbb{B}}
\newcommand{\bC}{\mathbb{C}}
\newcommand{\E}{\mathbb{E}}
\newcommand{\bE}{\mathbb{E}}
\newcommand{\bG}{\mathbb{G}}
\newcommand{\blG}{\boldsymbol{G}}
\newcommand{\G}{\mathbb{G}}
\newcommand{\bH}{\mathbb{H}}
\newcommand{\M}{\mathbb{M}}
\newcommand{\N}{\mathbb{N}}
	\renewcommand{\P}{\mathbb{P}}	
\newcommand{\bP}{\mathbb{P}}
\newcommand{\bPi}{\boldsymbol{\Pi}}
\newcommand{\R}{\mathbb{R}}
\newcommand{\bS}{\mathbb{S}}
\newcommand{\bT}{\mathbb{T}}
\newcommand{\Z}{\mathbb{Z}}
\newcommand{\bZ}{\mathbb{Z}}
\newcommand{\cC}{\mathcal{C}}
\newcommand{\cE}{\mathcal{E}}
\newcommand{\cL}{\mathcal{L}}
\newcommand{\cM}{\mathcal{M}}
\newcommand{\cN}{\mathcal{N}}
\newcommand{\sss}{\mathsf{s}}
\newcommand{\st}{\mathsf{t}}
\newcommand{\wt}{\widetilde}
\newcommand{\wh}{\widehat}
\newcommand{\grad}{\nabla}
\newcommand{\I}{\mathbf{1}}
\newcommand{\CN}{\mathsf{C}_{\mathsf n}}
\newcommand{\CV}{\mathsf{C}_{\mathsf v}}
\newcommand{\Cd}{\mathsf{C}_{\mathsf D}}
\newcommand{\CP}{\mathsf{C}_{\mathsf P}}
\newcommand{\CPw}{\mathsf{C}'_{\mathsf P}}
\newcommand{\CB}{\mathsf{C_0}}
\newcommand{\Cbu}{\mathsf{C}_\star}
\begin{document}
\author[A.\ Dembo]{Amir Dembo}
\author[R.\ Huang]{Ruojun Huang}
\author[T.\ Zheng]{Tianyi Zheng}
\address{Department of Mathematics, Stanford University. Building 380, 
Sloan Hall, Stanford, CA 94305, USA.}
\address{Department of Statistics, Stanford University. Sequoia Hall, 390 Serra Mall, Stanford, CA 94305, USA.}
\address{Department of Mathematics, University of California, San Diego. 9500 Gilman Dr, La Jolla, CA 92093, USA.}

\date{\today}
\thanks{This research was supported in part by NSF grant DMS-1613091.}

\subjclass[2010]{Primary 60J35; Secondary 60J05,  60J25, 60J45.}

\keywords{Heat kernel estimates, parabolic Harnack inequality, time-dependent random walks, conductance models, stability}

\title[Random walks among time increasing conductances: heat kernel estimates]{Random Walks Among Time Increasing Conductances: \\
Heat Kernel Estimates}
\maketitle

\begin{abstract}
For any graph having a suitable uniform Poincar\'e inequality
and volume growth regularity, we establish two-sided Gaussian 
transition density estimates and parabolic Harnack inequality, 
for constant speed continuous time random walks evolving via
time varying, uniformly elliptic conductances, provided the 
vertex conductances (i.e. reversing measures), increase in time.
Such transition density upper bounds apply for
discrete time uniformly lazy walks, with the matching lower 
bounds holding once the parabolic Harnack inequality is proved.
\end{abstract}

\begin{section}{Introduction} 
One of the most studied models for random walks in disordered 
media is the (random) conductance model, based on a locally 
finite, connected, non-oriented, graph $\bG=(V,E)$ equipped with a (random) collection 
of symmetric \emph{strictly positive}, edge conductances 
$\bPi := \{ \pi(x,y) > 0 : (x,y) \in E\}$ 
(for example, see \cite{MB,Kum} and references therein).
We consider here random walks 
among non-random but time-varying, edge-wise 
Borel measurable,
conductances $\bPi_t$.
In particular, taking $\bPi_t=\bPi_{[t]}$,
the discrete time (simple) random walk (\abbr{dtrw}) $\{X_{n}\}$ 
on the corresponding sequence of weighted graphs 
$\blG_{n}=(\bG,\bPi_n)$ is
constructed as a \emph{time-in-homogeneous} $V$-valued 
Markov chain, having for $t \in \N$ the transition probabilities
\begin{align}
K_t(x,y):=\frac{\pi_t(x,y)}{\pi_t (x)},\quad 
(x,y)\in E,\,t \ge 0,\label{n-kernel}
\end{align}
each of which is reversible with respect to the vertex conductances
\begin{align}\label{dfn:v-cond}
\pi_{t}(x):=\sum_{\{y: (x,y) \in E\}}\pi_{t}(x,y) \,.
\end{align}
Any such $\pi_t(\cdot)$ is a 
$\sigma$-finite measure on $V$
and 
we denote by $\cM_+(V)$ those 
$\sigma$-finite measures which are
bounded away from zero,  
namely functions $\mu: V \mapsto (0,\infty)$ with 
$\underline{\mu}:=\inf_x \mu(x) > 0$.

More generally, starting at $K_{n,n} = I$, any 
transition probabilities $\{K_\ell\}$ on $V$, 
induce the time-inhomogeneous transitions 
\begin{align}\label{def:k-mn}
K_{k-1,n}(x,z) := \bP(X_n=z|X_{k-1}=x) = 
\sum_{y\in V} K_{k}(x,y)K_{k,n}(y,z),\ k \le n\,.\quad 
\end{align}
Similarly, for $\{K_t\}$ of \eqref{n-kernel},
the constant speed random walk (\abbr{csrw}),
is the $V$-valued stochastic process of \abbr{RCLL} 
sample path $t\mapsto Y_{t}$ that at the arrival times
$\{T_n\}$ of an auxiliary unit rate Poisson process, 
jumps to $y \ne x$ according to  
\begin{align}
\bP(Y_{T_n} = y | Y_{T_n^{-}}=x) = K_{T_n}(x,y) \,,\qquad
(x,y)\in E,\,n \in \N \,.\label{def:csrw} 
\end{align}
The \abbr{csrw} is thus a 
time-inhomogeneous Markov process. Its transition probabilities 
$K_{s,t}(x,z) = \bP(Y_t=z|Y_s=x)$ satisfy the inhomogeneous semi-group property 
$K_{s,u}K_{u,t}=K_{s,t}$ for any $s<u<t$. Furthermore, such $\{K_{s,t}(x,z)\}$
solve with (initial) condition 
$u(t,x)= {\bf 1}_{\{z=x\}}$, the backward equation
\begin{align}
u(s,x)& := \E [u(t,Y_t) | Y_s=x] = e^{-(t-s)} u(t,x) 
+ \int_s^t e^{-(\xi-s)} d\xi \sum_{y \in V} K_\xi(x,y) u(\xi,y) \,, 
\label{def:k-st}
\end{align}
which upon setting 
$(\cL_s f) (x) := \sum_{y \in V} (f(y) - f(x)) K_s(x,y)$, amounts 
to the distributional solution of $\partial_{-s} u = \cL_s u(s,\cdot)$ 
(see \eqref{dfn:sol-cyl}).

Any conductance model, having $\bPi$ independent of $t$, is a reversible, 
time-homogeneous network, of reversing measure 
$\{ \pi(x): x \in V \}$. Time varying $\bPi_t$ for which $\pi_t(x)=\pi(x)$ 
are independent of $t$, retain this reversibility (even though they form
time-inhomogeneous transitions $K_{s,t}$). In contrast, 
as soon as $\{ \pi_t(x): x \in V\}$ changes with time ($t$), 
the dynamics associated with \eqref{def:k-mn} or with \eqref{def:k-st}
become genuinely non-reversible. Nevertheless, it has been suggested in \cite{ABGK} that some universality applies
for the recurrence versus transience of such dynamics.
Specifically, \cite[Conj. 7.1]{ABGK} conjectures that 
if both conductance models 
corresponding to $\blG_0$ and $\blG_\infty$ are recurrent, 
or alternatively, both $\blG_0$ and $\blG_\infty$ are transient,
then the same holds for the dynamic of non-decreasing 
$\{n \mapsto \blG_n\}$, namely the \abbr{DTRW} evolving according 
to \eqref{def:k-mn}. Indeed, using flows to construct 
suitable sub or super-harmonic functions, such universality 
is established in \cite[Sec. 5]{ABGK} when $\bG=\bT$ is a tree 
in the recurrent case, and when $\G=\N$ in the transient case, 
even allowing for conductances $\bPi_n$ adapted to the path
$\{X_k, k \le n\}$. In contrast, \cite[Sec. 6]{ABGK} shows
that such universality fails for randomly adapted, increasing in 
time, conductances on $\bG=\bZ^2$, whereas \cite[Ex. 3.5 and 3.6]{ABGK}
demonstrates such failure in the non-adapted and non-monotone setting
(even on the trivial tree $\bG=\bZ$).

The intuition behind \cite[Conj. 7.1]{ABGK} owes to the equivalence
between conductance models and electrical networks, yielding key 
comparisons such as Rayleigh monotonicity principle (due to which
the random walk on any sub-graph $\bG'$ of a recurrent $\bG$ 
must also be recurrent). Lacking such comparisons for the
time-varying conductances of \eqref{n-kernel}, 
we instead 
seek alternative analytic tools, such as, establishing  
the relevant (two-sided) Gaussian 
transition density estimate under certain geometric 
assumptions on the underlying graphs. 
That is, to show that for some $C$ finite, suitable measures
$\mu_{s,t}(\cdot)$, all $x,y \in V$ and $t-s \ge d(x,y)$,
\begin{align}
\text{\abbr{(GHKU)}}\quad  \quad K_{s,t}(x,y) &\le
\frac{C \, \mu_{s,t}(y)}{\pi_s(\B(y,\sqrt{t-s}))}\exp\left(-  \frac{d(x,y)^{2}}{C(t-s)}\right),\label{ghk-ubd}\\
\text{\abbr{(GHKL)}} \quad \quad    K_{s,t}(x,y) &\ge 
\frac{C^{-1} \, \mu_{s,t}(y)}{\pi_s(\B(y,\sqrt{t-s}))}\exp\left(- \frac{C d(x,y)^{2}}{t-s}\right)
\,,
\label{ghk-lbd}
\end{align} 
where
$d(x,y)=d_{\bG}(x,y)$ and
$\B(y,r):=\{z\in V:\,d(y,z)\le r\}$ denote the 
graph distance in $\bG$ and the corresponding 
$\bG$-ball, respectively.
Compared to classical Gaussian heat kernel estimates (\abbr{GHKE}), the novel feature in \eqref{ghk-ubd} and \eqref{ghk-lbd} is the presence of the time dependent reference measures $\mu_{s,t}$. Indeed, a key difficulty for applying analytic methods to the time-varying conductance model is that the time-inhomogeneous transition probabilities $\{K_{s,t}\}$ do not admit any common invariant measure. 
While in principle one has the freedom to choose $\mu_{s,t}(\cdot)$,
we shall take
\begin{align} 
\mu_{s,t}(y):=(\pi_sK_{s,t})(y)=\sum_{x\in V}\pi_s(x)K_{s,t}(x,y), 
\qquad t\ge s\ge 0 \,.
\label{evol-meas}
\end{align}
These measures play the role of invariant measures
by satisfying the relation
\[
\mu_{s,t}(y) = \sum_{x \in V} \mu_{s,v}(x) K_{v,t} (x,y) 
\,, \qquad \forall s \le v \le t \,,
\]
with the \abbr{GHKU} and \abbr{GHKL} thus viewed as Gaussian heat kernel estimates of $\{K_{s,t}\}$ with respect to such evolving reference measure. 
We say that $\{\blG_t\}$ satisfies the \emph{uniform volume doubling (\abbr{vd})} condition, 
if 
\begin{equation}
\sup_{t,r \ge 0} \; \sup_{x \in V} \Big\{ \; \frac{\pi_t(\B(x,2r))}{\pi_t(\B(x,r))} \; \Big\} \le \Cd < \infty \,,
\label{vd}
\end{equation}
and further motivate our specific choice \eqref{evol-meas} by noting that for
$\{\blG_t\}$ satisfying such \abbr{VD} condition, if both \eqref{ghk-ubd} 
and \eqref{ghk-lbd} are to hold, then one \emph{must
take} for $\mu_{s,t}$ in the
\abbr{GHKU} and \abbr{GHKL}, 
up to a universal constant, the measures from \eqref{evol-meas}. 
(Indeed, for $t=s$ compare \eqref{ghk-ubd} and
\eqref{ghk-lbd} at $x=y$, while for $t>s$
bound $\pi_s K_{s,t}$ below using \eqref{ghk-lbd}
and the fact that $d(x,y)^2/(t-s) \le 1$ on 
$\B(y,\sqrt{t-s})$, then rely on \eqref{vd} at
$r_k = 2^k \sqrt{t-s}$, $k \ge L$, when bounding $\pi_s K_{s,t}$ above 
via \eqref{ghk-ubd}, 
to deduce that necessarily 
$(\pi_s K_{s,t})/\mu_{s,t} \in [c_\star^{-1},c_\star]$ 
for some universal $c_\star<\infty$).

In this work we mainly focus on the case that for some 
$\pi_0 \in \cM_+(V)$, the reversing measures of $\{K_t\}$ form a
\emph{pointwise non-decreasing} sequence $t \mapsto \pi_t(x)$, 
of positive functions on $V$.
It is a delicate issue that one must impose certain constraints on the measures $\pi_t$: 
for if $t\mapsto\pi_{t}(x)$ are allowed to oscillate, 
then anomalous behavior may occur (cf. \cite{ABGK,HK,SZ3}). Our main result for \abbr{CSRW} is such two sided Gaussian estimates with respect to $\mu_{s,t}$, under uniform volume doubling and Poincar\'e inequalities. Necessary definitions are listed as follows.

\begin{defn}\label{def-const}
We say that the \emph{uniform Poincar\'{e} inequality} holds 
for $\pi_t$-reversible $K_t$, if  
\begin{align}
\inf_{f_\star \in \R} \Big\{ \sum_{x\in\B(x_{0},r)}|f(x) - f_{\star}
|^{2} 
\pi_t(x) \Big\} 
\le \CP\, r^{2}\sum_{x,y\in\B(x_{0},2r)}(f(x)-f(y))^{2}K_t(x,y)\pi_t(x)  \,,\label{poincare}
\end{align}
for some $\CP<\infty$ and all $f:V\to\R$, $x_{0}\in V$, $t,r \ge 0$. 
\newline
$\bullet$ The \emph{uniform volume growth with $v(r)$ doubling} applies
for $\bG=(V,E)$ and $\pi_t : V \to \R_+$, 
if for some $\CV<\infty$,
\begin{align}
\CV^{-1} \, \le \frac{\pi_t(\B(x,r))}{v(r)} \le \CV\,, \qquad 
\forall x\in V, \;\; \forall r, t \ge 0 \,,
\label{d-set}
\end{align}
where $v(\cdot)$ is non-decreasing, $v(2r) \le \CV \, v(r)$ 
and $v(0) = v(1)=1$.  In particular, then 
$\pi_t(x) \in [\CV^{-1},\CV]$ for all $t,x$ 
and  the uniform \abbr{vd} condition holds
(with $\Cd=\CV^3$ in \eqref{vd}).
\newline
$\bullet$ 
We call Markov kernels $\{K_t\}$  
\emph{uniformly lazy} and weighted graphs $\{\blG_t\}$ \emph{uniformly elliptic},
if respectively,
\begin{align}
\alpha_l &:= \inf_t \;\; \inf_{x \in V} \;\; \big\{ K_t(x,x) \big\} > 0 \,,
\label{dfn:unif-lazy}
\\
\label{dfn:unif-elliptic}
\alpha_e &:= \inf_t \inf_{(x,y) \in E} \big\{ K_t(x,y) \big\} > 0\,,
\end{align}
where if $\{\blG_t\}$ is uniformly lazy, as in \eqref{dfn:unif-lazy},
then in particular $(x,x) \in E$ for all $x \in V$. For
uniformly 
elliptic and lazy $\{\blG_t\}$, set $\bar \alpha:=\alpha_l \wedge \alpha_e$
(with $\bar \alpha=\alpha_e$ when \abbr{csrw} concerned).
\end{defn}

\begin{thm}\label{thm-main-CSRW} \emph{[two-sided Gaussian estimates for \abbr{CSRW}]}
\newline
Consider \abbr{CSRW} associated with \eqref{n-kernel} such that
$t \mapsto \pi_t(x) \in \cM_+(V)$ is non-decreasing. 
%
Assume, in the sense of Definition \ref{def-const}, that $\{\blG_t\}$ is uniformly elliptic with constant $\bar\alpha$, of uniform volume 
growth $v(r)$ with doubling constant $\CV$, satisfying the uniform Poincar\'{e} inequality with constant $\CP$. 
Then there exists
$\Cbu = \Cbu (
\CV,\bar \alpha,\CP)$ finite,
such that for $\mu_{s,t}$ of \eqref{evol-meas}, 
and all $t -s \ge d(x,y)$,
\begin{align}\label{ghke}
\frac{\Cbu^{-1} \mu_{s,t}(y)}{v(\sqrt{t-s})} \exp\Big(
-\frac{\Cbu d(x,y)^2}{t-s} \Big)
\le K_{s,t}(x,y) \le \frac{\Cbu \mu_{s,t}(y)}{v(\sqrt{t-s})}
\exp\Big( - \frac{d(x,y)^2}{\Cbu (t-s)}\Big) \,.
\end{align}
\end{thm}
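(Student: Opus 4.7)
My plan is to split \eqref{ghke} into three stages: (i) a Nash-type on-diagonal upper bound, (ii) a Davies perturbation argument upgrading it to the off-diagonal Gaussian upper bound \abbr{GHKU}, and (iii) a parabolic Harnack inequality (\abbr{PHI}) for non-negative solutions of $\pd_{-s}u=\cL_s u(s,\cdot)$, from which \abbr{GHKL} follows by a standard near-diagonal plus chaining argument. Throughout, the structural feature I would exploit is the monotonicity hypothesis $\pi_s\le \pi_t$ for $s\le t$: in every energy identity the time-derivative of the reference measure produces an extra term whose sign is dictated by monotonicity, and it is precisely this sign that substitutes for the conservation property afforded by a fixed invariant measure in the classical theory.

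For stage (i) I would work with a quadratic functional of $K_{s,t}(\cdot,y)$ weighted by $\pi_s$, whose derivative along \eqref{def:k-st} produces the standard Dirichlet energy dissipation together with an extra term proportional to $\pd_s\pi_s$; monotonicity renders the latter of the favourable sign. Meanwhile $\|K_{s,t}(\cdot,y)\|_{L^1(\pi_s)}=\mu_{s,t}(y)$ by \eqref{evol-meas}, providing the correct $L^1$ anchor for Nash. The needed uniform Nash inequality on each weighted graph $\blG_t$ is obtained from \eqref{poincare} and \eqref{d-set} by the standard Faber--Krahn / Nash implication with constants depending only on $\CP,\CV$. Integrating the resulting differential inequality yields the on-diagonal estimate $K_{s,t}(x,y)\le C\mu_{s,t}(y)/v(\sqrt{t-s})$. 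Stage (ii) is the Davies argument: conjugate $K_t$ by a Lipschitz exponential $e^{\psi}$, use uniform ellipticity \eqref{dfn:unif-elliptic} to control the edge-wise multiplicative error in the twisted Dirichlet form, rerun stage (i) on the twisted semigroup, and optimize $\psi$ against the graph distance to $y$.

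Stage (iii) is the hard part, and the main obstacle. The classical Moser proof of \abbr{PHI} uses that $\pi$ is invariant; here I would run the iteration on space-time cylinders $Q=[s_0,s_1]\times\B(x_0,r)$, testing the equation against $\eta(s,x)^2 u(s,x)^{p-1}$ for a space-time cutoff $\eta$ and varying $p$. The resulting identity splits into a controllable gradient term, a cutoff remainder absorbed via $\bar\alpha$, and a time-derivative-of-weight term of the form $\sum_x\int \pd_s\pi_s(x)\,(\eta u^{p/2})^2\,ds$ which, thanks to monotonicity, carries the right sign for both the sub-solution and the super-solution iterations. The uniform Poincar\'e inequality supplies the Sobolev step and volume doubling fixes the parabolic scaling, so one recovers, with constants depending only on $\CV,\CP,\bar\alpha$, both the $L^p$-mean-value estimates and the independent $\log u$ oscillation estimate of John--Nirenberg type. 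These combine in the usual way to give $\sup_{Q^-}u\le H\inf_{Q^+}u$ on appropriately time-offset sub-cylinders of the correct parabolic scaling.

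Granted \abbr{PHI}, the lower bound \abbr{GHKL} follows by applying the Harnack comparison to the non-negative solution $(s',x')\mapsto K_{s',t}(x',y)$ on well-chosen space-time cylinders: first a mass-calibration step, using $\sum_x K_{s,t}(x,y)\pi_s(x)=\mu_{s,t}(y)$ on a ball of radius $\sqrt{t-s}$, produces the near-diagonal lower bound of the correct order $\mu_{s,t}(y)/v(\sqrt{t-s})$; then chaining $N\asymp d(x,y)^2/(t-s)$ such applications along a near-geodesic via the semigroup identity $K_{s,u}K_{u,t}=K_{s,t}$ and the intertwining $\mu_{s,t}=\mu_{s,u}K_{u,t}$ transports the factor $\mu_{s,t}(y)$ through the chain and delivers the Gaussian factor. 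The most delicate quantitative point I anticipate is verifying that the time-derivative-of-weight term in the Moser iteration is absorbed with constants that are independent of the rate at which $\pi_t$ grows, so that $\Cbu$ in \eqref{ghke} depends only on $\CV,\CP,\bar\alpha$ as stated.
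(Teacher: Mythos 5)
Your three-stage road-map matches the paper's overall outline (on-diagonal Nash bound, off-diagonal upgrade, PHI, then GHKL), but there are substantive gaps in stages (i) and (iii), and stage (ii) silently changes methodology in a way the paper explicitly rejects.

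\textbf{Stage (i) has a genuine gap.} You run a Nash iteration for the quadratic functional $D_{s,t}(y):=\sum_x\pi_s(x)K_{s,t}(x,y)^2$, noting that $\|K_{s,t}(\cdot,y)\|_{L^1(\pi_s)}=\mu_{s,t}(y)$ provides the $L^1$ anchor, and state that integrating the differential inequality ``yields the on-diagonal estimate $K_{s,t}(x,y)\le C\mu_{s,t}(y)/v(\sqrt{t-s})$.'' The Nash ODE does control the $L^2$ quantity $D_{s,t}(y)$, but passing from $D_{s,t}(y)$ to pointwise control of $K_{s,t}(y,y)$ requires an identity of the form $K_{s,t}(y,y)\asymp \sum_x K_{s,u}(y,x)K_{u,t}(x,y)\asymp D_{u,t}(y)/\pi_u(y)$, which uses \emph{reversibility of $K_{s,u}$}. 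That reversibility fails here: the composed kernel $K_{s,t}$ is \emph{not} reversible for any measure when $\pi_s\ne\pi_t$. The paper sidesteps this by abandoning the pointwise kernel and instead bounding the operator norm $\|K_{s,t}\|_{L^1(\pi_t)\to L^\infty(\pi_s)}$ via a factorization through $L^2$ and duality (Lemmas~\ref{diff-eq}--\ref{A_n}), which delivers the upper bound $K_{s,t}(x,y)\le C\,\pi_t(y)/v(\sqrt{t-s})$ --- with reference $\pi_t$, \emph{not} $\mu_{s,t}$. The improved $\mu_{s,t}$-referenced bound is recovered only after the PHI is available (Lemmas~\ref{lem:csrw-sol}, \ref{lem:d-mgf} and Step~I of Section~\ref{sect:conseq}), precisely because the conversion from the $L^2$ quantity $D_{s,t}$ to the pointwise kernel needs a Harnack comparison as a surrogate for reversibility.

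\textbf{Stage (iii): the claimed two-sided sign is not automatic.} You assert the time-derivative-of-weight term ``carries the right sign for both the sub-solution and the super-solution iterations.'' Testing the backward equation against $\eta^2 u^{p-1}$ for $p>1$ versus against $\eta^2 u^{-p-1}$ for $p>0$ produces terms in $\partial_s\pi_s$ of \emph{opposite sign}, so monotonicity alone is not a two-sided gift. The paper's Theorem~\ref{harnack} in fact carries the extra hypothesis $\CB=\sup_{t,x}\pi_t(x)/\pi_0(x)<\infty$ precisely to absorb the unfavourable direction; under uniform volume growth this is automatic ($\CB\le\CV^2$), so the conclusion of Theorem~\ref{thm-main-CSRW} is unaffected, but your claimed sign argument does not stand on its own. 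The paper avoids the two-sided Moser iteration altogether by using Grigor'yan's route: a weighted Poincar\'e inequality applied to $v=-\log u$ (where concavity and monotonicity of $\pi_s$ cooperate; see \eqref{def:I-t}--\eqref{eq:bd-I-t}), a one-sided $L^2$-mean-value inequality for sub-solutions (Prop.~\ref{csrw-ML1}), and the growth lemmas \ref{first-gr}--\ref{second-gr}.

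\textbf{Stage (ii): Davies versus complex interpolation.} You propose Davies' twisted-semigroup method, but as classically formulated it leans on the symmetry of the Dirichlet form. The paper explicitly states it chooses the Hebisch--Saloff-Coste complex-interpolation method (Propositions~\ref{prop:hs}, \ref{prop:gauss-ubd}) because ``it never invokes reversibility and incorporates well having time dependent reference measures $\pi_s$,'' combining the $2\to\infty$ Nash output with a Gaffney $2\to2$ bound (Lemma~\ref{gaffney}). If you prefer Davies, you must rerun the exponential-weight energy estimate on both $K^\theta_{s,t}$ and its adjoint $(K^\star_{s,t})^{-\theta}$ with the correct time-indexed reference measures (this is in effect what the paper does inside the interpolation argument). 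As written, the step is under-justified.

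In short: the skeleton is right, but the key missing idea is that without reversibility the $L^2$ Nash control does \emph{not} directly become a pointwise bound with reference $\mu_{s,t}$ --- one must first prove the weaker $\pi_t$-referenced GHKU and the PHI, and only then bootstrap to $\mu_{s,t}$, exactly as the paper's Theorem~\ref{thm-main}(a)--(b) is organised.
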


\medskip
Theorem \ref{thm-main-CSRW} is a direct consequence of 
Theorem \ref{harnack} and the detailed Theorem \ref{thm-main},
with the latter also providing 
our results in the more technically involved discrete time setting.
Note that if $\pi_t(\cdot)=\pi_0(\cdot)$ is  
independent of $t$, then also $\mu_{s,t}(y)=\pi_0(y)$ 
for all $t \ge s$ (see Remark \ref{mon-mu-m-n}), 
and our Gaussian transition density estimates 
take the usual form 
of the time-homogeneous setting. More generally, the same applies
whenever $\{\mu_{s,t}\}$ of \eqref{evol-meas} are $c$-stable 
with respect to the strictly positive 
$\sigma$-finite 
measures $\pi_{s}$ on $V$, as in \cite[Defn 1.10]{SZ2}. That is, 
whenever for some $c$ finite
\begin{equation}
c^{-1}\le\frac{\mu_{s,t}(y)}{\pi_{s}(y)}\le c \,, \qquad 
\forall t\ge s \ge 0, \quad 
y\in V\,.\label{eq:stab}
\end{equation}
Subject to such $c$-stability, considering \eqref{ghk-ubd}--\eqref{ghk-lbd} 
for $y=x$ yields that 
\begin{align}\label{rec:goe-cnd}
\int_s^\infty K_{s,t}(x,x) dt  < \infty \qquad  \Longleftrightarrow \qquad 
\int_{0}^\infty \frac{dt}{\pi_s(\B(x,\sqrt{t}))} \, < \infty \,.
\end{align} 
Starting at $X_s=x$, the \abbr{lhs} of \eqref{rec:goe-cnd} amounts 
to a finite expected total occupation time of state $x$, hence its transience 
for either \abbr{csrw} or \abbr{dtrw}
$\{t \mapsto \blG_t\}$. If in addition
$\sup_s \{h_s\} > 0$ implies $\inf_s \{h_s\} > 0$ for 
$h_s :=\P(X_{t} \ne x, \, \forall t>s | X_s =x)$,
it thereby answers the transience versus recurrence question raised in \cite{ABGK}.
We believe that 
$\{\mu_{s,t}\}$
of Theorem \ref{thm-main-CSRW} (and for \abbr{DTRW} as in Theorem \ref{thm-main})
are
all within a uniform constant of $\pi_{0}$. That is,
\begin{conj}\label{conj-stab}
If $\{\blG_t\}$, of non-decreasing 
$t \mapsto \pi_t(x)$, is uniformly elliptic, of
uniform volume growth $v(r)$ with $v(r)$ doubling, and satisfies the 
uniform Poincar\'{e} inequality, then for 
the corresponding \abbr{csrw} or uniformly lazy \abbr{DTRW}, 
$\inf_{t,x} \{\mu_{0,t}(x)\} \ge 1/c\left(\CP,\CV,\bar \alpha\right)>0$.
\end{conj}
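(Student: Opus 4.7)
The plan is to recast the conjecture as a PDE question about the ratio $h_t(x) := \mu_{0,t}(x)/\pi_t(x)$. A direct calculation using the forward Kolmogorov equation for $\mu_{0,t}$ together with the reversibility of $K_t$ with respect to $\pi_t$ shows
\[
\partial_t h_t(x) = (\cL_t h_t)(x) - \Phi(t,x)\, h_t(x), \qquad h_0 \equiv 1, \qquad \Phi(t,x) := \partial_t \log \pi_t(x) \ge 0,
\]
where non-negativity of the absorbing potential $\Phi$ is precisely the monotonicity hypothesis on $\pi_t$. The parabolic maximum principle then yields $h_t \le 1$, and since $\pi_t(x) \ge \CV^{-1}$ by \eqref{d-set}, the conjecture reduces to showing the uniform lower bound $h_t(x) \ge c(\CP,\CV,\bar\alpha) > 0$ for all $t \ge 0$ and all vertices. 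An analogous discrete-time identity, obtained via the instantaneous reversibility of each $K_n$, would reduce the uniformly lazy \abbr{dtrw} case to an entirely parallel problem.

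The key analytic leverage is that the two-sided bound $\pi_t(x) \in [\CV^{-1},\CV]$ and the monotonicity of $t \mapsto \pi_t(x)$ give
\[
\int_0^\infty \Phi(s,x)\, ds \;=\; \log\frac{\pi_\infty(x)}{\pi_0(x)} \;\le\; 2\log\CV
\]
pointwise in $x$. Duhamel's formula, combined with the conservation of mass $\sum_y K_{s,t}(x,y) \equiv 1$, then yields
\[
h_t(x) \;=\; 1 - \int_0^t \sum_{y} K_{s,t}(x,y)\, \Phi(s,y)\, h_s(y)\, ds \;\ge\; 1 - \int_0^t \sum_{y} K_{s,t}(x,y)\, \Phi(s,y)\, ds,
\]
so it suffices to bound the right-most integral uniformly away from $1$. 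I would substitute the \abbr{ghku} from Theorem~\ref{thm-main-CSRW}, swap $\int ds$ with $\sum_y$, and for each fixed $y$ integrate by parts in $s$ against the non-decreasing primitive $A_y(s) := \int_0^s \Phi(r,y)\,dr \le 2\log\CV$, finally summing in $y$ via volume doubling to absorb the Gaussian tail against the factor $v(\sqrt{t-s})^{-1}$.

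The main obstacle is circularity: the \abbr{ghku} carries the factor $\mu_{s,t}(y)$, which is precisely the object whose two-sided stability we wish to establish, so any direct substitution risks assuming what we are trying to prove. I would therefore propose a bootstrap: first establish the claim on a bounded window $t \le t_0 = t_0(\CV,\bar\alpha)$, on which a crude Duhamel expansion together with uniform ellipticity delivers $|\mu_{0,t} - \pi_0| \le \tfrac12 \pi_0$, and then extend to all $t$ via the semigroup identity $\mu_{0,t} = \mu_{0,s} K_{s,t}$ and iterated applications of Theorem~\ref{thm-main-CSRW} on successive windows of length $t_0$. An alternative route is to prove a Schr\"odinger-type parabolic Harnack inequality for $\partial_t u = \cL_t u - \Phi u$ with a non-negative potential of uniformly bounded total mass at each site, and to use it to propagate the initial value $h_0 \equiv 1$ through parabolic cylinders of scale $\sqrt{t}$, in the spirit of \cite{SZ2}. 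Adapting the Moser iteration underlying Theorem~\ref{harnack} to accommodate such an absorbing potential appears to be the main technical hurdle.
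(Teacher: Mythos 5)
This statement is Conjecture \ref{conj-stab}, which the paper leaves \emph{open} and does not prove; there is therefore no paper proof against which to compare, and what you have proposed is an approach to an open problem, not a reconstruction of a known argument.

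Your reduction to the absorbing Schr\"odinger equation for $h_t := \mu_{0,t}/\pi_t$ is correct: the forward Kolmogorov identity for $\mu_{0,t}=\pi_0K_{0,t}$ together with $\pi_t$-reversibility of $K_t$ give $\partial_t h_t = \cL_t h_t - \Phi_t h_t$ with $\Phi_t=\partial_t\log\pi_t\ge0$ and $h_0\equiv 1$, so $h_t\le 1$. The Duhamel formula you then write is incorrect, and this is the central gap. The kernel $K_{s,t}$ is the propagator of the \emph{backward} equation $\partial_{-s}u=\cL_s u$ (see \eqref{def:k-st}--\eqref{dfn:sol-cyl}), whereas $h$ obeys a \emph{forward} equation $\partial_t h_t = \cL_t h_t - \Phi_t h_t$ with data at $t=0$. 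The correct propagator for $\partial_t u = \cL_t u$ is the reversed-order composition $\check{P}_{s,t}:=K_t K_{t-1}\cdots K_{s+1}$ (and its \abbr{csrw} analogue), which satisfies the opposite Chapman--Kolmogorov law $\check{P}_{s,t}=\check{P}_{u,t}\check{P}_{s,u}$; your formula would require $[\cL_t,K_{s,t}]=0$, which fails whenever $K_t$ genuinely varies. As \eqref{eq:revers} makes explicit, $K_{s,t}(x,y)$ and $\check{P}_{s,t}(y,x)$ differ along each path by the factor $\prod_r\pi_r(x_r)/\pi_{r+1}(x_r)$, whose worst-case lower bound $e^{-(a_t-a_s)}$ can decay without any rate under the bare monotonicity hypothesis --- exactly the regime where the conjecture is hard --- so the substitution is not a harmless simplification.

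Even with the kernel corrected, the argument does not close. Integration by parts against $A_y(s)=\int_0^s\Phi(r,y)\,dr\le 2\log\CV$ produces the boundary term $A_x(t)\le 2\log\CV$, which already need not be $<1$, plus a remainder $\int_0^t\bigl(\cL_s\check{P}_{s,t}A_\cdot(s)\bigr)(x)\,ds$ that has neither a sign nor a $t$-uniform bound; the alternative crude estimate $1-h_t(x)\le 2\log\CV\sum_y\sup_{s\le t}\check{P}_{s,t}(x,y)$ involves a Green-function-type sum that diverges with $t$ for polynomial volume growth. Your bootstrap is also circular: the \abbr{ghku}/\abbr{ghkl} of \eqref{ghke} carry the factor $\mu_{s,t}(y)$ you are trying to bound from below, and even granting a one-window estimate $\sum_xK_{(k-1)t_0,kt_0}(x,y)\ge 1-\epsilon$, iterating over $k$ windows yields only $\mu_{0,kt_0}\ge(1-\epsilon)^k$, which degenerates as $k\to\infty$. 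The Schr\"odinger-\abbr{phi} idea you sketch last is, in our opinion, the most promising entry point, but it would require (for \abbr{dtrw}) the discrete-time \abbr{phi} not established here, an extension of Theorem \ref{harnack} to absorbing potentials, and a genuinely multi-scale way to exploit $\int_0^\infty\Phi(s,x)\,ds\le 2\log\CV$; none of these is supplied, which is why the statement remains a conjecture.
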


\medskip
In proving Theorem \ref{thm-main-CSRW}, our steps are {\bf (I)} first establishing an on-diagonal upper bound (i.e. \abbr{GHKU} for $x=y$), 
{\bf (II)} proving the full \abbr{GHKU}, but without the term $\mu_{s,t}$, {\bf (III)} establishing a parabolic Harnack inequality (see Theorem \ref{harnack}), and 
{\bf (IV)} obtaining the two sided estimates stated from the Harnack inequality 
and a-priori weaker Gaussian upper bound. 
This road-map is well-established in the literature of heat kernel estimates. However,
in the time-varying setting each step requires overcoming difficulties brought by the changing conductances, in particular, by the lack of reversibility. We discuss below in more details our methods for each step.

Our approach to on-diagonal upper bound is through the so called the Nash profile (for its definition see \eqref{nash-profile}). In the time homogeneous setting, Coulhon \cite{C1,C2, C3} systematically derives 
sharp upper bound on $\left\Vert K_{0,n}\right\Vert _{1\to\infty}$
out of Nash type inequalities.  
Our on-diagonal upper-bound, namely the \abbr{rhs} of \eqref{ghk-ubd}
for $x=y$, is a special case of the general framework of 
Section \ref{sect:nash}, where time dependent reference measures 
such as $\pi_n$ or $\mu_n:=\mu_{0,n}$ of \eqref{evol-meas},
are used to obtain these upper bounds from the corresponding Nash profiles. 
We present this method, which is of independent interest, for the case of 
\abbr{DTRW}. It can be worked out directly for \abbr{CSRW}. Alternatively, 
the \abbr{CSRW} results can be deduced from the corresponding ones for
\abbr{DTRW}, as we do here.

Recall that for Markov operator $K$ on $V$ and its 
invariant $\sigma$-finite measure $\pi$, the map
\begin{equation}\label{def:Kf}
f \mapsto (Kf)(x):=\sum_{y\in V} K(x,y)f(y) \,,
\end{equation}
satisfies $(K f)^2 \le K f^2$ for bounded $f$, thereby 
extending to non-expanding map on $L^2(\pi)$ having the 
non-negative definite Dirichlet form 
\begin{align*}
\cE_{K,\pi}(f,g):=\langle f- Kf,g\rangle_{\pi} 
\end{align*}
with $L^2(\pi) \subseteq \mbox{Dom} (\cE_{K,\pi})$. The Nash profile of such  
$(K,\pi)$ is 
\begin{align}
\cN_{K,\pi}(\st):=\sup\left\{ \frac{\|f\|_{L^{2}(\pi)}^{2}}{\cE_{K,\pi}(f,f)}:
0<\|f\|_{L^{1}(\pi)}^{2}\le \st \|f\|_{L^{2}(\pi)}^{2} < \infty \right\} .
\label{nash-profile}
\end{align}
Since $\st \mapsto \cN_{K,\pi}(\st)$ is non-decreasing,  
setting $\cN_{K,\pi}(\infty)=\infty$ yields Nash inequality 
\begin{align*}
\|f\|_{L^{2}(\pi)}^{2}\le\cN_{K,\pi}\left(\|f\|_{L^{1}(\pi)}^{2}/\|f\|_{L^{2}(\pi)}^{2}\right)\cE_{K,\pi}(f,f)\,,
\end{align*}
for any non-zero $f \in L^2(\pi)$. Further, by Cauchy-Schwarz inequality,
\[
\|f\|_{L^{1}(\pi)}^{2}\le\pi(\mbox{supp}f)\|f\|_{L^{2}(\pi)}^{2}
\]
so $\cN_{K,\pi}(\st)$ plays the role of $L^{2}$-isoperimetric profile, 
where $\st$ acts as the volume. 
 
With the uniform Poincar\'e inequality providing an explicit upper bound on 
the Nash profile of weighted graph $\blG_t$ in terms of the
doubling function $v(r)$ of 
\eqref{d-set} (see our derivation of \eqref{eq:nash-profile-vd}),
the application of Section \ref{sect:nash}
most relevant here is as follows
(c.f. Lemma \ref{P+VG-to-Nash}).
\begin{thm}\label{nash-inc} 
Suppose $K_t$ have reversible measures $\pi_t \in \cM_+(V)$
with $t \mapsto \pi_t(x)$ non-decreasing for each $x \in V$
and the non-decreasing $\sss \mapsto N(\sss)$ is such that 
for some finite $\CN$, $\sss_0$,
\begin{equation}\label{eq:nash-profile}
N(\sss) \ge \sup_t \{ \cN_{K_{t}^2,\pi_{t}}(\sss) \} \,, \qquad
\inf_{\sss \ge \sss_0} \Big\{ \frac{N(\CN \, \sss)}{N(\sss)} \Big\} \ge 2  \,.
\end{equation}
(a). For the dynamics \eqref{def:k-mn} and any $s \le t$ 
one has the on-diagonal upper bound 
\begin{equation}\label{eq:diag-ubd-nash}
\sup_{x,y\in V}\Big\{ 
\frac{K_{s,t}(x,y)}{\pi_{t}(y)} \Big\} \le \CN' \psi\Big(\frac{t-s}{3}\Big)\,,
\end{equation}
for $\psi(\st):=1/F^{-1}(\st;c_\star,N(\cdot))$, 
and some $\CN'=\CN'(\CN,\sss_0/c_\star) < \infty$, where
$c_\star = \underline{\pi_0}$ and 
\begin{equation}\label{eq:N-psi}
F(u;a,N(\cdot)) := \int_a^u \frac{N(\sss)}{\sss} \, d\sss \,.
\end{equation}
(b). For the 
dynamics \eqref{def:k-st} 
replace $\cN_{K_t^2,\pi_t}(\sss)$ by $2 \cN_{K_t,\pi_t}(\sss)$ 
in \eqref{eq:nash-profile}, with the \abbr{rhs} of 
\eqref{eq:diag-ubd-nash} having the expectation 
over $\frac{1}{3}$Poisson$(2(t-s))$ law of the corresponding $\psi(\cdot)$.
\end{thm}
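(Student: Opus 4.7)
My plan is a Coulhon-style $L^2$ energy analysis, adapted to the non-reversible time-varying setting by exploiting the monotonicity $t \mapsto \pi_t$. For fixed $x \in V$ and $s \in \N$, I track the energy
$$u_n := \sum_y \frac{K_{s,n}(x,y)^2}{\pi_n(y)} = \|k_n\|_{L^2(\pi_n)}^2, \qquad k_n(y) := K_{s,n}(x,y)/\pi_n(y),$$
noting $u_s = 1/\pi_s(x) \le 1/c_\star$ and $\|k_n\|_{L^1(\pi_n)} = 1$. The first goal is to derive a Nash-type difference inequality for $u_n$, and then to convert the resulting $L^2$ decay into the pointwise bound \eqref{eq:diag-ubd-nash} via Cauchy--Schwarz at a midpoint.

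The single-step evolution is the crux. Setting $\tilde k_n(y) := K_{s,n}(x,y)/\pi_{n+1}(y)$, reversibility of $K_{n+1}$ with respect to $\pi_{n+1}$ yields $k_{n+1} = K_{n+1} \tilde k_n$, so $u_{n+1} = \|K_{n+1}\tilde k_n\|^2_{L^2(\pi_{n+1})}$. The monotonicity $\pi_n \le \pi_{n+1}$ then supplies the two essential facts $w_n := \|\tilde k_n\|^2_{L^2(\pi_{n+1})} \le u_n$ and $\|\tilde k_n\|_{L^1(\pi_{n+1})} = 1$. Applying Nash for $(K_{n+1}^2,\pi_{n+1})$ as in \eqref{eq:nash-profile} gives
$$u_{n+1} = w_n - \cE_{K_{n+1}^2,\pi_{n+1}}(\tilde k_n,\tilde k_n) \le w_n\bigl(1 - 1/N(1/w_n)\bigr),$$
and a dichotomy on whether $w_n \ge u_n/2$ (direct Nash decrement) or $w_n < u_n/2$ (so $u_{n+1} \le w_n < u_n/2$ outright) yields the unified inequality $u_n - u_{n+1} \ge u_n/(2 N(2/u_n))$.

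Comparing with the ODE $-\dot u = u/(2N(2/u))$, the substitution $\sss = 2/u$ linearizes it into $dt = 2 N(\sss)/\sss\, d\sss$, and integrating against $F(\cdot\,;c_\star, N)$ from \eqref{eq:N-psi} gives $u_n \le C \psi((n-s)/2)$. Here the doubling hypothesis $N(\CN \sss)/N(\sss) \ge 2$ for $\sss \ge \sss_0$ is precisely what keeps the discrete-to-continuous comparison error, and the replacement of the true initial value $2\pi_s(x)$ by the uniform $c_\star$, within bounded multiplicative constants. To convert the $L^2$ control into the claimed pointwise bound, I then apply Cauchy--Schwarz at a midpoint $m \in [s,t]$:
$$\biggl(\frac{K_{s,t}(x,y)}{\pi_t(y)}\biggr)^2 \le u_{x,s}(m) \cdot v_{y,t}(m), \qquad v_{y,t}(m) := \sum_z \frac{\pi_m(z) K_{m,t}(z,y)^2}{\pi_t(y)^2},$$
and control $v_{y,t}(m)$ by a symmetric \emph{backward}-in-time analysis, for which the critical $L^1$-normalization uses $\mu_{m,t}(y) \le \pi_t(y)$ --- itself a consequence of $\pi_t \uparrow$ combined with step-wise reversibility applied to the adjoint. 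Taking $m \approx (s+t)/2$ produces the stated bound.

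The main obstacle is the absence of a common invariant measure, which normally underpins Coulhon's analysis via the self-adjointness of $K$ on $L^2(\pi)$. The monotonicity $t \mapsto \pi_t$ is precisely what converts both of the naturally occurring ``wrong-measure'' error terms --- the forward $w_n \le u_n$ and the backward $\mu_{m,t}(y) \le \pi_t(y)$ --- into the favorable direction, preserving the Nash-type ODE. Part (b) for CSRW then follows from (a) by the standard Poisson representation (conditioning on the arrival times of the jump clocks), with the factor $2$ in Poisson$(2(t-s))$ and the replacement $\cN_{K_t^2,\pi_t} \mapsto 2\cN_{K_t,\pi_t}$ reflecting that $\cE_{K,\pi} \approx \tfrac{1}{2}\cE_{K^2,\pi}$ on the slow spectral modes relevant to Nash.
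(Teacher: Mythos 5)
Your proof is correct and reaches the result, but it organizes the argument differently from the paper. The paper's Lemma \ref{diff-eq} is precisely your ``backward'' Nash iteration: with $g = \delta_y/\pi_n(y)$ (so $\|g\|_{L^1(\pi_n)}=1$), the quantity $J_n(n-m)=\|K_{m,n}g\|^2_{L^2(\pi_m)}$ is your $v_{y,t}(m)$, and the paper applies it to obtain the $L^1(\pi_t)\to L^2(\pi_m)$ operator-norm bound. The paper then upgrades to $L^1(\pi_t)\to L^\infty(\pi_s)$ not by a second, forward Nash iteration as you propose, but via the self-improving bootstrap of Lemma \ref{A_n}: factoring $K_{m,n}^\star = K_{\ell,n}^\star K_{m,\ell}^\star$, using $\|K_{m,\ell}^\star\|^2_{L^1\to L^2}\le\|K_{m,\ell}^\star\|_{L^1\to L^\infty}\|K_{m,\ell}^\star\|_{L^1\to L^1}\le\|K_{m,\ell}^\star\|_{L^1\to L^\infty}$, and closing a recursion in $M_N$. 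Your alternative --- a direct forward iteration on $u_{x,s}(n)=\sum_y K_{s,n}(x,y)^2/\pi_n(y)$ followed by Cauchy--Schwarz at an intermediate time $m$ --- is more elementary and equally valid. You correctly identify that monotonicity of $n\mapsto\pi_n$ supplies both $w_n\le u_n$ on the forward side and $\mu_{m,t}(y)\le\pi_t(y)$ on the backward side. Your dichotomy on $w_n\gtrless u_n/2$ is a sensible way to close the forward recursion, since $w\mapsto w-w/N(1/w)$ need not be monotone; the backward piece does not need this, as the measure change there lands on the output $L^2$-norm rather than producing an intermediate $w_n$, which is presumably why the paper works exclusively backward. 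The balancing of the intermediate time to yield $\psi((t-s)/3)$ ultimately relies on the same ratio bound the paper proves as \eqref{eq:psi-ratio-bd} from the doubling hypothesis on $N$.

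One concrete gap: your paragraph on part (b) is a heuristic, not a proof. After Poissonizing, the induced discrete-time kernels $K_{T'_n}$ need not be lazy, and without laziness $\cN_{K_{T'_n}^2,\pi_{T'_n}}$ is not controlled by $\cN_{K_{T'_n},\pi_{T'_n}}$ (a bipartite $K$ is the cautionary example: $K^2$ does not mix at all). The device the paper uses is to run the auxiliary clock at rate $2$ and independently censor each proposed jump with probability $1/2$, producing the $1/2$-lazy kernels $K_n^{(\omega)}=\tfrac12 K_{T'_n}+\tfrac12 I$; for these, laziness gives $\cE_{(K^{(\omega)})^2,\pi}\ge\tfrac12\cE_{K^{(\omega)},\pi}$, hence $\cN_{(K^{(\omega)})^2,\pi}\le 2\cN_{K^{(\omega)},\pi}=4\cN_{K_{T'_n},\pi}$, which the hypothesis on $2\cN_{K_t,\pi_t}$ (with the constants absorbed by $\CN'$) makes usable in part (a). Your spectral-mode heuristic is the right intuition for why $\cN_{K^2,\pi}$ and $\cN_{K,\pi}$ should be comparable, but turning it into a proof requires lazifying the kernels first.
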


We next remark on related works \cite{SZ1,SZ2} and \cite{DHMP}.
General time-inhomogeneous transitions \{$K_{k,n}(x,y)$\} 
that satisfy \eqref{def:k-mn} for some finite state space $V$,
are considered in \cite{SZ1,SZ2}. Aiming at merging 
for such transitions, namely the suitable convergence 
to zero of $|K_{0,n}(x,y)-K_{0,n}(x',y)|$ as $n \to \infty$,
\cite{SZ1,SZ2} develop in this context analytic tools 
such as the Nash and log-Sobolev inequalities, where a key 
assumption of \cite{SZ2} is that the Markov transitions $\{ K_{0,n} \}$ 
yield $\mu_{0,n}$ as in \eqref{evol-meas} which are $c$-stable 
with respect to some $\pi_0 \in \cM_+(V)$. 
The $c$-stability condition \eqref{eq:stab}
is in general difficult to verify, as it requires fine understanding 
of the transition probabilities $K_{s,t}$.
Theorem $\ref{nash-inc}$, and more generally Proposition \ref{nash-inc-gen}, are phrased in a general framework that allows more flexible choices of reference measures, see Example \ref{stab-ex} and \ref{inc-ex}. In particular, 
under the non-decreasing assumption on $t\mapsto\pi_t(x)$, we do not 
require $c$-stability. The method of Nash profiles applies also 
when $N(\cdot)$ 
grows too slowly for the \abbr{rhs} of \eqref{eq:nash-profile}, 
for example when $N(e^{\sss})$ is a doubling function
(see Remark \ref{regularity}).

Under the same assumption that $n\mapsto\pi_{n}(x)$ is non-decreasing, 
evolving sets are used in \cite{DHMP} for deriving the heat 
kernel on-diagonal upper bound (ie. \abbr{ghku} for $x=y$)
for uniformly lazy \abbr{dtrw}
from $L^{1}$-isoperimetry property of $\{\blG_n\}$. 
\noindent
Via a different approach, Theorem {\ref{nash-inc}} strengthens the main result 
of \cite{DHMP}, see Example \ref{inc-diag-ubd}. More precisely, we recover the same on-diagonal upper bound 
in the more general setting of \eqref{def:k-mn}, while  
replacing the assumed $\kappa_m \sss^{-1/d}$ lower bound on the 
$L^1$-isoperimetric profile for the weighted graph $\blG_m$,
with having only 
$\cN_{K_m^2,\pi_m}(\sss) \le \kappa_m^{-2} \sss^{2/d}$
(c.f. Lemma \ref{N-L} for a comparison between the 
Nash and $L^1$-isoperimetric 
profiles which follows from the Cheeger inequality).
We note in passing that Theorem \ref{nash-inc} can 
even be applied to certain non-local Markov transition 
kernels, as demonstrated by Example \ref{nonlocal}.

\medskip
Due to lack of reversibility for $K_{s,t}$ when $t>s$, 
the off-diagonal upper bound 
is technically more involved.
Nevertheless, in Section \ref{sect:gauss} we adapt the technique of \cite[Section 2]{HS} for deriving
off-diagonal Gaussian upper bounds via complex interpolation. 
The complex interpolation method requires two input bounds: a bound on the 
$2\to\infty$ norm of $K_{s,t}$ and a bound on the $2\to 2$ norm of the perturbed kernel $K^\theta_{s,t}$ (see \eqref{eq:Ks}) with respect to appropriate 
reference measures, see Proposition \ref{prop:hs}. The $2\to\infty$ norm bound is provided by the Nash profile method 
in Section \ref{sect:nash}. The bound on $2\to2$ norm is often referred to as the Gaffney lemma, which we prove 
for time non-decreasing $\pi_t$ in Lemma \ref{gaffney} (based on 
Lemma \ref{gaffney-ineq}). The complex interpolation method is especially 
suited for our purpose, since it never invokes reversibility and
incorporates well having time dependent reference measures $\pi_s$ (for each
term $K_s$).

\medskip
So far what we have discussed applies equally well to \abbr{CSRW} and \abbr{DTRW}. 
The \abbr{GHKL} turns out to be more difficult without reversibility, even if 
just to obtain an on-diagonal lower estimate. Specializing the setting 
of Theorem \ref{nash-inc}, back to that of weighted graphs $\{\blG_t\}$ 
that satisfy the uniform Poincar\'{e} inequality and uniform volume doubling,
our approach is thus to first
establish a Parabolic Harnack Inequality (\abbr{phi}), then derive the full two-sided \abbr{GHKE} from it.
To this end, we first introduce the notations needed for stating
such parabolic Harnack inequality. 
We call $u(\cdot,\cdot) \ge 0$ on a \emph{time-space cylinder}
\begin{equation}\label{dfn:cyl}
Q:= Q(t_{1},t_{2};z,R)=\left[t_{1},t_{2}\right]\times\B(z,R)
\end{equation}
a (non-negative) solution to the 
(backward) heat equation, if 
\begin{equation}\label{dfn:sol-cyl}
\partial_{-s} u(s,x)=\sum_{y} K_s(x,y) u(s,y) - u(s,x), 
\qquad \forall (s,x) \in Q \,,
\end{equation}
for some non-negative boundary values (for $u$) outside $Q$. For bounded
range $K_s$ we have that \eqref{dfn:sol-cyl} holds on 
$\B(z,R-r_0)$ even when restricting the sum to $\B(z,R)$.
Here $r_0=1$, so such solution is uniquely 
specified by 
$\{ u(s,x) : d(z,x)=R$ or $s=t_2\}$.
For \emph{discrete time} we take $s \in \N$ and 
$\partial_{-s} u(s,\cdot):=u(s-1,\cdot)-u(s,\cdot)$, whereas 
for \abbr{csrw} we assume \abbr{wlog} 
that $s \mapsto u(s,x)$ is absolutely continuous, so
$\partial_{-s} u$ 
exists a.e. and \eqref{dfn:sol-cyl} 
interpreted as a distributional identity 
via integration by parts.

We say that the \abbr{PHI} holds for \eqref{dfn:sol-cyl},
if for any $0<\theta_1 \le \theta_2<\theta_3 \le \theta_4$ 
some $\gamma=\gamma(\theta_i) \in (0,1)$,
any $T \ge (\theta_4 R)^2$ and 
solution $u$ of \eqref{dfn:sol-cyl} 
on time-space cylinder $Q(T-(\theta_4 R)^{2},T;z,8 R)$, we have 
\begin{equation}\label{eq:harnack}
(\theta_{2i-1} R)^2 <  \tau_i \le (\theta_{2i} R)^2, 
x_1,x_2 \in \B(z,R) \quad 
\Longrightarrow \quad 
u(T-\tau_2,x_2) \ge \gamma u(T-\tau_1,x_1) \,,
\end{equation}
further restricting \eqref{eq:harnack} in the discrete case 
to $\tau_2 \ge \tau_1 + d(x_1,x_2)$.
\begin{remark}\label{rmk:str-pos}
If $u(\cdot,\cdot)$ satisfies 
\eqref{dfn:sol-cyl} 
then so does $a u(\cdot,\cdot) + b$.
Considering $b \downarrow 0$ we deduce that it suffices 
to prove the \abbr{PHI} only for strictly positive solutions.
\end{remark}

Recall \cite{Del} that for \abbr{CSRW} on time-invariant conductances, 
the \abbr{PHI} is equivalent to uniformly elliptic conductances satisfying 
both the \abbr{PI} and \abbr{VD}. Our next result extends this to 
time-varying, non-decreasing vertex conductances $t \mapsto \pi_t(\cdot)$. 
\begin{thm}\label{harnack}\emph{[parabolic Harnack inequality]}
\newline
Suppose $\{\blG_t\}$ of 
non-decreasing $t \mapsto \pi_t(x) \in \cM_+(V)$ and 
$\CB := \sup_{t,x} \{\frac{\pi_t(x)}{\pi_0(x)}\}$ finite,
is uniformly elliptic, satisfying the uniform volume doubling 
condition and the uniform Poincar\'{e} inequality.
Then, the \abbr{PHI} holds for the 
continuous time heat equation 
\eqref{dfn:sol-cyl} and some $\gamma=\gamma(\CP, \Cd, \alpha_e, \CB)$ positive.
\end{thm}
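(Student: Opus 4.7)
My strategy is to run a Moser iteration adapted to the time-varying reversing measure $\pi_s$, relying on the monotonicity $t \mapsto \pi_t(x)$ to handle the extra terms produced by differentiating the measure. By Remark \ref{rmk:str-pos}, it suffices to establish \eqref{eq:harnack} for strictly positive solutions $u > 0$ of \eqref{dfn:sol-cyl} on the cylinder $Q(T - (\theta_4 R)^2, T; z, 8R)$. The argument proceeds in three steps: a weighted energy identity, two-sided Moser sup bounds, and a Bombieri--Giusti closing via a logarithmic estimate.

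\emph{Step 1 (weighted energy identity).} For a spatial cutoff $\phi$ supported in $\B(z, 8R)$ and an exponent $p \ne 0$, I test $\partial_{-s} u = \cL_s u$ against $\phi^2 u^{2p-1}$ in $\pi_s$. Using reversibility of $K_s$ with respect to $\pi_s$, differentiation of $s \mapsto \sum_x \phi^2(x)\, u^{2p}(s,x)\,\pi_s(x)$ yields, schematically,
\begin{equation*}
\partial_s \sum_x \phi^2\, u^{2p}\,\pi_s \;=\; -\, c_p\, \cE_{K_s,\pi_s}(\phi u^p,\phi u^p) + (\textup{cutoff errors}) + \sum_x \phi^2\, u^{2p}\,\partial_s \pi_s \,,
\end{equation*}
with $c_p > 0$ for $p > 1/2$ and for $p < 0$. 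The cutoff errors are absorbed via the Leibniz expansion and uniform ellipticity $\alpha_e$; the crucial new term $\sum_x \phi^2\, u^{2p}\,\partial_s \pi_s$ is \emph{non-negative}, giving a fixed sign that is exploitable in either direction of the time integration.

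\emph{Step 2 (two-sided sup bounds).} The identity of Step 1, combined with the uniform Nash inequality of Lemma \ref{P+VG-to-Nash}, drives the standard Moser chain on shrinking concentric space-time cylinders. For $p > 1/2$ this yields an upper bound of the form
\begin{equation*}
\sup_{Q_+} u \;\le\; C_1 \Bigl( \frac{1}{\pi_0(Q)} \iint_Q u^{2p}\,d\pi_s\,ds \Bigr)^{1/(2p)}
\end{equation*}
on the later-$s$ sub-cylinder $Q_+$. For $p < 0$, the analogous iteration (equivalently, applying the scheme to $u^{-1}$) produces the dual lower bound
\begin{equation*}
\inf_{Q_-} u \;\ge\; C_2 \Bigl( \frac{1}{\pi_0(Q)} \iint_Q u^{2p}\,d\pi_s\,ds \Bigr)^{1/(2p)}
\end{equation*}
on the earlier-$s$ sub-cylinder $Q_-$. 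Each replacement of $\pi_s$ by $\pi_0$, needed whenever one compares cylinders at different time sections, costs a factor of $\CB$, so the constants $C_i$ depend only on $(\CP, \Cd, \alpha_e, \CB)$.

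\emph{Step 3 (log estimate, Bombieri--Giusti, and main obstacle).} Testing the equation against $\phi^2/u$ and applying an elementary convexity inequality yields a Dirichlet-form bound on $\log u$; combined with the uniform Poincar\'e inequality this becomes a bounded mean oscillation bound for $\log u$ on $Q$, with the $\partial_s \pi_s$ contribution controlled by $\log \CB$. A parabolic Bombieri--Giusti self-improvement lemma then connects the two sup bounds of Step 2, applied with $|p|$ arbitrarily small, to give $u(T - \tau_2, x_2) \ge \gamma\, u(T - \tau_1, x_1)$ with $\gamma = \gamma(\CP, \Cd, \alpha_e, \CB) > 0$. The main obstacle I anticipate is the measure bookkeeping throughout the iteration: each differentiation in $s$ generates a $\partial_s\pi_s$ term, and each time comparison costs a factor $\CB$. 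The monotonicity hypothesis is essential precisely because it fixes the sign of the extra term, enabling its favorable absorption at each step; without it one would require a pointwise bound on $\partial_s \pi_s$ that is not available in the present setting.
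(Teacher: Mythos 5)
You take a Moser iteration plus Bombieri--Giusti route, which is genuinely different from the paper's. The paper instead follows Grigor'yan \cite{Gr}: after a weighted Poincar\'{e} inequality (Proposition \ref{weighted}) and an $L^2$ mean-value inequality for sub-solutions (Proposition \ref{csrw-ML1}), it proves a \emph{first growth lemma} (Lemma \ref{first-gr}) and a \emph{second growth lemma} (Lemma \ref{second-gr}), and then reaches the \abbr{PHI} by Grigor'yan's iterative argument. Your key observation — that monotonicity of $s\mapsto\pi_s(x)$ makes the extra term $\sum_x\phi^2 u^{2p}\,\partial_s\pi_s$ non-negative, hence droppable — is correct and is exactly the device the paper exploits, for instance in Proposition \ref{csrw-ML1}, where the $\partial_{-s}\langle\eta_s^2 u_s^2\rangle_{\pi_s}$ on the left-hand side differs from $\langle\partial_{-s}(\eta_s^2 u_s^2)\rangle_{\pi_s}$ precisely by such a term of fixed sign. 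So your Steps 1--2 broadly parallel Propositions \ref{weighted}--\ref{csrw-ML1}.

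The gap is in your Step 3. Testing against $\phi^2/u$, the quantity being differentiated is $\langle\eta^2(\log u)\rangle_{\pi_s}$ and the extra term is $\sum_x\eta^2(\log u)\,\partial_s\pi_s$. The Bombieri--Giusti closure requires a two-sided weak-$L^1$ estimate on $\log u$, hence a normalization under which $\log u$ takes both signs, so this extra term has \emph{no} fixed sign and cannot be dropped. The assertion that it is ``controlled by $\log\CB$'' is not justified: $\CB$ bounds $\int\partial_s\log\pi_s\,ds$, not the pointwise rate $\partial_s\pi_s$, and controlling $\int\langle\eta^2(\log u)\,\partial_s\pi_s\rangle\,ds$ would require an a-priori bound on $\log u$ --- exactly what the log estimate is meant to establish. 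The paper circumvents this by proving only a \emph{one-sided} level-set statement: Lemma \ref{first-gr} works with $v=-\log\widetilde u$ for $\widetilde u=1\wedge u$, so $v\ge 0$ and the extra term $\langle\eta v\,\partial_s\pi_s\rangle\ge 0$ has the favorable sign for the single inequality direction required. Grigor'yan's growth-lemma iteration then yields the \abbr{PHI} with no Bombieri--Giusti closure, hence no two-sided log estimate. The paper's remark following Lemma \ref{first-gr2} records a closely related obstruction for the Fabes--Stroock route (lack of an a-priori kernel mass lower bound), further evidence that Moser-style closings face concrete obstacles in this time-varying setting that the growth-lemma approach is designed to avoid.
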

 
\smallskip
For time-invariant conductances the \abbr{PHI} implies 
H\"older regularity of 
(non-negative) solutions of the heat equation
(see \cite[Pages 227-228]{Del}). This extends to our setting, 
yielding the 
H\"older regularity of $(s,x) \mapsto K_{s,t}(x,z)$ 
under the conditions of Theorem \ref{harnack}.
\begin{ppn}\label{ppn:holder}
The \abbr{PHI} implies existence of 
$h(\gamma)>0$ such that for any 
$z\in V$, $R \ge 1$, $T \ge 4 R^2$ and 
solution $u \ge 0$ of
\eqref{dfn:sol-cyl} on $Q=Q(T-4R^2, T; z, 8R)$, 
if $y_j \in \B(z,R)$ and $(T-s_j) \in [R^2,4R^2]$, $j=1,2$, then  
\begin{align}\label{eq:holder}
|u(s_2,y_2)-u(s_1,y_1)|\le (4/R)^{h}
(|s_2-s_1|^{1/2} \vee d(y_1,y_2))^h 
\sup_Q \, \{u\}
\,.
\end{align}
\end{ppn}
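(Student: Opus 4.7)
My plan is the classical Moser oscillation-decay scheme, adapted to the backward heat equation \eqref{dfn:sol-cyl}. By Remark \ref{rmk:str-pos}, for any subregion $Q' \subset Q$ both $v_{Q'} := u - \inf_{Q'} u$ and $w_{Q'} := \sup_{Q'} u - u$ are non-negative solutions of \eqref{dfn:sol-cyl} on $Q'$, with $v_{Q'} + w_{Q'} \equiv \osc(u, Q') := \sup_{Q'} u - \inf_{Q'} u$.

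The key step is an oscillation-decay lemma: after fixing $0 < \theta_1 \le \theta_2 < \theta_3 \le \theta_4$, there exist $L > 1$ and $\delta := 1 - \gamma/2 \in (0,1)$ such that one can build a nested sequence of time-space cylinders $Q^{(k)} \subset Q(T-4R^2, T; z, 8R)$ of spatial radius proportional to $R/L^k$ with $\osc(u, Q^{(k+1)}) \le \delta \, \osc(u, Q^{(k)})$. To justify one step, inside $Q^{(k)}$ place a Harnack cylinder (of size compatible with $L$), split into a later and an earlier piece in the sense of \eqref{eq:harnack}. Since $v_{Q^{(k)}} + w_{Q^{(k)}} \equiv \osc(u, Q^{(k)})$, some point in the later piece satisfies $v_{Q^{(k)}} \ge \osc(u,Q^{(k)})/2$ or $w_{Q^{(k)}} \ge \osc(u,Q^{(k)})/2$; in either case \eqref{eq:harnack} lower-bounds the same function by $\gamma \, \osc(u, Q^{(k)})/2$ throughout the earlier piece, which converts back to $\osc(u, \text{earlier piece}) \le \delta \, \osc(u, Q^{(k)})$, and hence to the claimed decay upon taking $Q^{(k+1)}$ inside the earlier piece. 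Iterating yields $\osc(u, Q^{(k)}) \le \delta^k \sup_Q u$. Given $(s_j, y_j)$ as in \eqref{eq:holder} with $\rho := |s_2 - s_1|^{1/2} \vee d(y_1, y_2)$, choose $k$ so that $R/L^k$ is of order $\rho$ and center the $\{Q^{(k)}\}$ so that both target points lie in $Q^{(k)}$, yielding
\[
|u(s_2, y_2) - u(s_1, y_1)| \le \osc(u, Q^{(k)}) \le \delta^k \sup_Q u \le C (\rho/R)^h \sup_Q u,
\]
for $h := \log(1/\delta)/\log L > 0$, matching \eqref{eq:holder} after adjusting constants.

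The main obstacle is geometric bookkeeping. Because \eqref{eq:harnack} is one-sided in time (the inf over the earlier slice controls the sup over the later slice, and not conversely), and because the four parameters $\theta_i$ rigidly prescribe the relative positions of the earlier and later pieces within an $8R$-ball, one must fix $L$, the $\theta_i$, and the centers of $\{Q^{(k)}\}$ once and for all so that (i) every $Q^{(k)}$ lies inside $Q(T - 4R^2, T; z, 8R)$ where the \abbr{PHI} applies, (ii) the earlier piece of each $Q^{(k)}$ contains $Q^{(k+1)}$ together with the later piece to be used at step $k+1$, and (iii) the terminal cylinder still encloses both $(s_1, y_1)$ and $(s_2, y_2)$, which is feasible since \eqref{eq:holder} restricts both points to an interior region of $Q$. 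Once these choices are made the iteration is routine, and $h$ depends only on $\gamma$ (through $\delta$) and on the universal constant $L$, matching the claim $h = h(\gamma)$.
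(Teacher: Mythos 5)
Your proposal is correct and follows essentially the same route as the paper: nested time-space cylinders shrinking geometrically around $(s_1,y_1)$, oscillation decay at each step by applying the \abbr{PHI} to the non-negative solutions $u-\inf u$ and $\sup u - u$, then iterating to the scale $\rho=|s_2-s_1|^{1/2}\vee d(y_1,y_2)$. The only cosmetic difference is in the one-step decay: you argue by dichotomy (at a test point one of the two functions is $\ge \osc/2$) to get the factor $1-\gamma/2$, whereas the paper evaluates both $u-m(i)$ and $M(i)-u$ at the \emph{same} intermediate point and adds the two resulting \abbr{PHI} inequalities to obtain the slightly sharper $w(i-1)\le(1-\gamma)w(i)$, with explicit choices $R_i=2^i$, $\theta_1=\theta_2=1/\sqrt2$, $\theta_3=\sqrt3/2$, $\theta_4=1$ doing the bookkeeping you defer.
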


Parabolic Harnack inequalities, Gaussian estimates and H\"older regularity of solutions of the heat equation have a long history.
Aiming at a-priori H\"{o}lder continuity for solutions of the heat equation
\begin{equation}
\label{eq:he-rd}
\partial_{t}u(t,x)=\cL u(t,x), \qquad t \ge 0, x \in \M \subseteq \R^d \,,
\end{equation}
on a Riemannian manifold $\M$, with a divergence form operator 
$$
\cL u :=\sum_{i,j=1}^{d}
\partial_{x_{i}} \Big( a_{ij}(t,x) \partial_{x_{j}} u \, \Big) 
$$
having symmetric, measurable, uniformly elliptic matrix of 
coefficients $\{ a_{ij}(\cdot) \}$,
the study of heat kernel asymptotics for the corresponding diffusion on $\M$,  
goes back at least to works of De Giorgi, Nash, Moser in mid-century.
The characterization of two-sided Gaussian Heat Kernel 
Estimates (\abbr{GHKE}), for the solutions of \eqref{eq:he-rd} 
(namely, the diffusion analog of \abbr{vsrw}),
in terms of Poincar\'e Inequality (\abbr{PI}), plus the volume doubling (\abbr{vd})
property, and their equivalence
to the Parabolic Harnack Inequality (\abbr{PHI}),
are established independently by \cite{Gr,SC}. Such results 
have later been derived in 
\cite{St1,St2} for time-dependent, strongly local Dirichlet 
forms on metric measure spaces (subject to the existence of a 
time-invariant Radon measure in the underlying topological space).
However, strongly local Dirichlet forms as in \cite{St1,St2} can have 
no jumps (nor killing). In particular, this assumption excludes
the uniformly elliptic (and lazy)
random walks on a (static) graph $\blG_0$, for which such
equivalence between \abbr{GHKE}, \abbr{PHI} and \abbr{PI+VD}
is proved in \cite{Del}. See also \cite{BC}, which proves a 
similar equivalence for \abbr{csrw} on non-elliptic (static) 
graph $\blG_0$, when the \abbr{GHKE}, \abbr{PHI} and 
\abbr{PD+VI} are suitably restricted (to large balls).
One direction we pursue here, is to extend this
graph part of the theory, by obtaining the
\abbr{ghku} (with $\mu_{s,t}$ replaced by 
$\pi_t$), for both dynamics of \eqref{def:k-mn}
and \eqref{def:k-st},
allowing for genuinely time-varying, non-decreasing
$\{\pi_t(x): x \in V\}$.
In a related context, 
the two-sided Gaussian heat kernel estimates are already provided 
in \cite{DD,GOS} for continuous-time symmetric rate random walks 
on $\bZ^d$ having time-dependent, uniformly elliptic 
jump rates $c_t(x,y)$ (i.e. the so-called variable 
speed random walk \abbr{VSRW}; c.f. \cite{MO} for the same 
in certain degenerate cases lacking uniform ellipticity). Indeed, 
the treatment of time-varying \abbr{VSRW} is much 
simpler than both \abbr{DTRW} and \abbr{csrw} 
since any \abbr{vsrw} has the time-invariant reversing measure 
$\{ \pi_t(x)=1: x \in V, t \ge 0 \}$. Similar reversible 
situation applies in \cite{CGZ} where two-sided Gaussian heat 
kernel bounds are stated (without a detailed proof), 
for the \abbr{dtrw} of \eqref{def:k-mn}, 
provided $\{\pi_{n}(x)=\pi_0(x)\}$ is constant in time 
and a uniform Sobolev inequality holds.

The approach to establish \abbr{PHI} from volume doubling and Poincar\'{e} 
inequalities, as in \cite{Gr} and \cite{SC} (that we adapt for
proving Theorem \ref{harnack} in Section \ref{sect:harnack}), relies on taking the time derivative 
of the logarithm of the heat kernel. Having a discrete-time 
version of such a step, is a well known open challenge. 
This difficulty can be circumvented by first deriving the \abbr{HKE}-s and then deducing
the \abbr{PHI} from them, see \cite{FS}. Indeed, for time-invariant conductances, as in \cite{Del}, one compares the transition probabilities of 
\abbr{DTRW} to those of the \abbr{CSRW}, thereby obtaining the Gaussian estimates for the \abbr{DTRW}, which in turn yield the \abbr{PHI}. However,
such a comparison with the \abbr{CSRW} is not available in our
time-varying setting. Alternatively, in \cite{HS2}, 
Hebisch and Saloff-Coste prove the \abbr{PHI} 
for discrete-time dynamic, directly, 
from a scale invariant elliptic Harnack inequality and local Sobolev inequalities. Unfortunately, it is unclear what should be the analogous
elliptic objects to study in the time-varying setting. For these reasons, we are only able to establish parabolic Harnack inequalities in the case of \abbr{CSRW}.

We summarize our main results for both \abbr{CSRW} and \abbr{DTRW} as follows. 
As mentioned before, under the assumption of uniform Poincar\'e inequalities, uniform ellipticity, and uniform volume growth doubling conditions, we derive a \abbr{GHKU} without the term $\mu_{s,t}$
for both \abbr{CSRW} and uniformly lazy \abbr{DTRW}; then the improved \abbr{GHKU} 
with respect to $\mu_{s,t}$ and the
matching \abbr{GHKL} are both obtained as a consequence of the \abbr{PHI}. 
\begin{thm}\label{thm-main} \emph{[two-sided Gaussian \abbr{hke}]}
\newline
Consider either \abbr{CSRW}
or a uniformly lazy \abbr{DTRW} associated with \eqref{n-kernel},
for non-decreasing $t \mapsto \pi_t(x)$ and $\{\blG_t\}$ of uniform volume 
growth $v(r)$ with $v(r)$ doubling.\\
(a). Let $I(r)=r^2$ on $[0,1]$, and for $r>1$ set
$I(r)=r (\log r + 1)$ 
for the \abbr{csrw}, while $I(r)=\infty$ for 
the \abbr{dtrw}. Then, the uniform Poincar\'{e} inequality 
yields that for some finite $C=C(\CP,\CV,\alpha_l)$,
\begin{equation}\label{ghku-pi}
K_{s,t}(x,y) \le
\frac{C}{v(\sqrt{t-s})} \exp \Big\{- 
\frac{(t-s)}{C} I\big(\frac{d(x,y)}{t-s}\big)\Big\} 
\,,
\qquad \forall x,y \in V, \; t \ge s \ge 0 \,.
\end{equation}
(b) Suppose \eqref{ghku-pi} and the 
\abbr{PHI} hold (so for \abbr{DTRW}, the graphs $\{\blG_t\}$ are uniformly elliptic
and lazy).
Then, for $\mu_{s,t}$ of \eqref{evol-meas}, some 
$\Cbu = \Cbu (C,\gamma,\CV,\bar \alpha)$ finite
and all $t -s \ge d(x,y)$,
\begin{align}\label{ghke}
\frac{\Cbu^{-1} \mu_{s,t}(y)}{v(\sqrt{t-s})} \exp\Big(
-\frac{\Cbu d(x,y)^2}{t-s} \Big)
\le K_{s,t}(x,y) \le \frac{\Cbu \mu_{s,t}(y)}{v(\sqrt{t-s})}
\exp\Big( - \frac{d(x,y)^2}{\Cbu (t-s)}\Big) \,.
\end{align}
\end{thm}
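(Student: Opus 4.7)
The plan is to execute, in order, the four-step program announced in the introduction, with Part (a) covering steps (I)--(II) and Part (b) covering step (IV) (step (III), the \abbr{phi}, being supplied in the \abbr{csrw} case by Theorem \ref{harnack}).

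\textbf{Part (a).} First, from the uniform Poincar\'e inequality \eqref{poincare} and uniform volume growth \eqref{d-set} I derive (the content of Lemma \ref{P+VG-to-Nash}) a uniform Nash profile bound $\sup_{t\ge 0} \cN_{K_t^2,\pi_t}(\sss) \le N(\sss)$ with $N(\sss)\asymp v^{-1}(c\sss)^2$, which inherits the growth hypothesis of \eqref{eq:nash-profile} from the $v$-doubling. Theorem \ref{nash-inc} then delivers the on-diagonal upper bound $K_{s,t}(x,y)/\pi_t(y) \le C/v(\sqrt{t-s})$, which (since $\pi_t(y) \in [\CV^{-1},\CV]$) is the $x=y$ case of \eqref{ghku-pi}. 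For the off-diagonal Gaussian decay I adapt the complex interpolation scheme of \cite{HS} to the time-varying, non-reversible setting in Section \ref{sect:gauss}. The two inputs are the $L^2 \to L^\infty$ on-diagonal bound just obtained, and a Gaffney-type $L^2(\pi_s) \to L^2(\pi_t)$ estimate (Lemma \ref{gaffney}) on the perturbed kernels $K^\theta_{s,t}(x,y)=e^{\theta(\psi(y)-\psi(x))}K_{s,t}(x,y)$ for Lipschitz $\psi$; plugging both into Proposition \ref{prop:hs} and optimising over $\theta$ yields \eqref{ghku-pi}. The particular profile $I(\cdot)$ reflects the jump statistics: for the \abbr{csrw} the unit-rate Poisson clock contributes a Cram\'er rate giving $I(r) \asymp r(\log r+1)$ at large $r$, while for the uniformly lazy \abbr{dtrw} the a priori constraint $K_{s,t}(x,y)=0$ whenever $d(x,y)>t-s$ forces $I(r)=\infty$ for $r>1$.

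\textbf{Part (b)} takes as given \eqref{ghku-pi} and the \abbr{phi}, and follows the classical template from \cite{Del,FS}. \emph{Step (i), near-diagonal lower bound.} By conservativity $\sum_y K_{s,t}(x,y)=1$ together with the Gaussian tail of \eqref{ghku-pi}, one has $\sum_{y\in\B(x,A\sqrt{t-s})}K_{s,t}(x,y) \ge 1/2$ for $A$ large enough. Volume doubling then exhibits some $y^\ast$ in this ball with $K_{s,t}(x,y^\ast)\ge c/v(\sqrt{t-s})$. Applying \abbr{phi} to the non-negative solution $(s',x')\mapsto K_{s',t}(x',y^\ast)$ of the backward heat equation \eqref{dfn:sol-cyl} on a cylinder of radius $\asymp\sqrt{t-s}$, and chaining Harnack balls, lifts this to $K_{s,t}(x',y')\ge c'/v(\sqrt{t-s})$ for all $x',y'\in \B(x,\sqrt{t-s})$. \emph{Step (ii), full Gaussian lower bound.} For the regime $d(x,y)\le t-s$, pick $N \asymp 1\vee d(x,y)^2/(t-s)$ intermediate points $x=z_0,z_1,\ldots,z_N=y$ along a near-geodesic with spacing $\lesssim \sqrt{(t-s)/N}$, and iterate Chapman--Kolmogorov with step (i) at each factor, yielding $K_{s,t}(x,y)\ge c'' e^{-C'' d(x,y)^2/(t-s)}/v(\sqrt{t-s})$. \emph{Step (iii), reinsertion of $\mu_{s,t}(y)$.} Combining step (i) with $\pi_s\ge\CV^{-1}$ and volume doubling gives
\[
\mu_{s,t}(y) \;=\; \sum_{x'\in V}\pi_s(x')K_{s,t}(x',y) \;\ge\; \CV^{-1}\!\!\!\sum_{x'\in\B(y,\sqrt{t-s})}\!\frac{c'}{v(\sqrt{t-s})} \;\ge\; c_1>0,
\]
while summing the Gaussian tail of \eqref{ghku-pi} against $\pi_s\le\CV$ produces $\mu_{s,t}(y) \le C_1<\infty$. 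Thus $\mu_{s,t}(y)$ is squeezed between two positive constants depending only on $(C,\gamma,\CV,\bar\alpha)$, and substituting these into the bounds of (a) and (b.ii) yields the two-sided \eqref{ghke}.

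\emph{The main technical obstacle} is Step (II) of Part (a): establishing the Gaffney $L^2(\pi_s)\to L^2(\pi_t)$ estimate without a common invariant measure and pushing it through complex interpolation so that genuinely Gaussian (not merely sub-Gaussian) decay survives. The non-decreasing assumption $t\mapsto\pi_t(x)\nearrow$ enters exactly to fix the sign of the time-derivative boundary term arising in the Gaffney energy argument, which is why oscillating $\pi_t$ (as in \cite{ABGK,HK,SZ3}) must be excluded.
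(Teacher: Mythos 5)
Your Part (a) is correct and matches the paper's approach in Sections~\ref{sect:nash} and~\ref{sect:gauss}: uniform Nash profile from Poincar\'e plus volume doubling via Lemma~\ref{P+VG-to-Nash}, on-diagonal upper bound from Theorem~\ref{nash-inc}, then off-diagonal decay by feeding the Gaffney bound (Lemma~\ref{gaffney}) and the $2\to\infty$ bound into the complex-interpolation machinery of Proposition~\ref{prop:hs}.

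Part~(b), however, has a genuine gap in your Step~(i). You locate some $y^\ast\in\B(x,A\sqrt{t-s})$ with $K_{s,t}(x,y^\ast)\ge c/v(\sqrt{t-s})$, then apply the \abbr{PHI} to the solution $(s',x')\mapsto K_{s',t}(x',y^\ast)$ and assert that this ``lifts'' to $K_{s,t}(x',y')\ge c'/v(\sqrt{t-s})$ for all $x',y'\in\B(x,\sqrt{t-s})$. But the \abbr{PHI} of \eqref{eq:harnack} compares values of a \emph{single} solution $u$ of \eqref{dfn:sol-cyl} at different space-time points; chaining Harnack balls moves the spatial argument $x'$ of that solution, not the terminal point $y^\ast$. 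To move $y'$ you would need to apply \abbr{PHI} to the solution with terminal point $y'$ and know a positive starting value for \emph{it} --- which is what you are trying to prove, so the argument is circular. In the time-homogeneous reversible case the terminal point can be moved because $y'\mapsto K_{s,t}(x_0,y')/\pi(y')$ is itself a caloric function by reversibility; that duality fails exactly when $\pi_t$ varies in time. Your Step~(iii) then uses the unestablished near-diagonal bound to deduce $\mu_{s,t}(y)\ge c_1>0$ uniformly, which is precisely the $c$-stability statement the paper records as the open Conjecture~\ref{conj-stab}; were your argument valid the factors $\mu_{s,t}$ in \eqref{ghke} could be discarded, contradicting the authors' explicit decision to keep them.

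The paper's proof of part (b) in Section~\ref{sect:conseq} circumvents this by never asserting a constant on-diagonal lower bound. It works instead with the $L^2$-type quantity $D_{s,t}(y)=\sum_x\pi_s(x)K_{s,t}(x,y)^2$, shown non-decreasing in $s$ and doubling via the \abbr{PHI} (Lemma~\ref{lem:csrw-sol}(b)), together with an exponential-moment estimate on it (Lemma~\ref{lem:d-mgf}). Chapman--Kolmogorov and Cauchy--Schwarz then give an improved \abbr{GHKU} with the factor $\mu_{s,t}(y)$, while the near-diagonal lower bound \eqref{eq:dlbd2} obtained from \abbr{PHI} plus mass concentration carries the factor $\mu_{t-\delta\tau,t}(y)$ rather than a constant, and this factor is propagated unchanged through the chaining argument to yield the matching \abbr{GHKL} of \eqref{ghke}.
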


\begin{remark}\label{mon-mu-m-n}
In the discrete time setting of \eqref{def:k-mn},
if $t \mapsto\pi_t (x)$ is non-decreasing at each $x \in V$, 
then pointwise $\pi_s K_s = \pi_s \le \pi_{s+1}$, hence   
$\mu_{s,t} = (\pi_s K_s) K_{s+1,t} \le \mu_{s+1,t}$ for $\mu_{s,t}$
of \eqref{evol-meas}. That is,
\begin{equation}\label{eq:mon-mu-n}
\mu_t(x):=\mu_{0,t}(x) 
\le \mu_{s',t} (x) \le \mu_{s,t}(x) \le  \mu_{t,t}(x) = \pi_t(x) \,,
\quad \forall t\ge s\ge s'\ge 0,\;\; x \in V 
\end{equation}
To verify that the same applies in the setting of the 
continuous time evolution \eqref{def:csrw}, recall that 
$\mu_{s,t}$ is then the expected value over
$N\!\!\sim$Poisson($t-s$) and jump times
$s < T'_{1} < \cdots < T'_N \le t$ of the
value $\mu^{(\omega)}_{0,N}$ for a discrete time dynamic 
starting at $\pi_s$ and using the random 
$\{K_{T'_{m}}\}$ in \eqref{def:k-mn}.
With $s' \in (T'_L,T'_{L+1}) \cap (s,t)$ for some 
$0 \le L \le N$, clearly $\mu_{s',t}$ 
exceeds the expected value of $\mu^{(\omega)}_{L,N}$ for the 
corresponding discrete time dynamic, so 
by the monotonicity of the expectation, \eqref{eq:mon-mu-n} 
applies also for any continuous time evolution \eqref{def:csrw}
with non-decreasing  $t \mapsto \pi_t(\cdot)$.
By the same reasoning, for both \abbr{dtrw} and \abbr{csrw},
if $t \mapsto \pi_t(x)$ is 
non-increasing, then so is $s\mapsto \mu_{s,t} (x)$.
In particular, for 
the special case of $\pi_t(x)=\pi(x)$ independent of $t$
we have that $\mu_{s,t}=\pi$ and Theorem \ref{thm-main} recovers 
(under uniform Poincar\'e inequality and uniform volume growth $v(r)$ with $v(r)$ doubling), 
the Gaussian upper bound for \abbr{DTRW} stated in \cite[Sec. 7]{CGZ}.
\end{remark}

In view of \eqref{eq:mon-mu-n}, upon verifying Conjecture \ref{conj-stab}  
the \abbr{rhs} of \eqref{rec:goe-cnd} should provide a criterion 
for transience/recurrence of \abbr{csrw} in terms of the volume 
growth of $\blG_0$ (and upon proving the discrete time \abbr{PHI},
the same would apply for uniformly elliptic and lazy \abbr{DTRW}).

\begin{remark} Without monotonicity of $n \mapsto \pi_n$, even for
$\{\pi_n\}$ that are $c$-stable \abbr{wrt} the function
$\nu_0 (x) \equiv 1$ on $\bG=\bZ_{\ge0}$, the reference
$\mu_n=\mu_{0,n}$ may be non-comparable with $\pi_n$. For example, 
fixing $\eta,\ep>0$ let $\pi_n(x,x+1)=1 + (-1)^{n+x} \eta$ 
with $\pi_n(x,x)=1+\ep \I_{\{n+x \text{ odd}\}}$ when $x>0$  
and $\pi_n(0,0)=\pi_n(2,2) \pi_n(0,1)/2$
(to assure that $K_n(0,0)=K_n(2x,2x)$ for any $n,x$).
Classifying states into types $A$ or $B$ according to 
$n+X_n$ being even or odd, respectively, yields an
$\{A,B\}$-valued homogeneous Markov chain  
of invariant measure $[m_A,m_B]=[3(1+\ep),3+\ep]$. The 
process $\{X_n\}$ has drift $\eta$ at the $A \mapsto A$ 
moves with opposite drift at $B \mapsto B$ moves. Consequently, 
$\{X_n\}$ has asymptotic speed $v=\eta\ep/(3+2\ep)$ to the right.
In particular, for some $C$ finite and any $y\in\Z_{\ge0}$ 
we have for all $n \ge C y/v$, the fast decay 
$$ 
\mu_{n}(y)=\sum_{x\ge 0} K_{0,n}(x,y) 
\le C\sqrt{n}e^{-(nv-y)^2/(Cn)} \,.
$$
\end{remark}

In Section \ref{sect:proof-ppn} we treat a perturbative regime. Specifically, we show that the \abbr{GHKU} 
of Theorem \ref{nash-inc} and Theorem \ref{thm-main}(a) 
apply as soon as $t \mapsto e^{a_t} \pi_t(\cdot)$ is non-decreasing 
for some 
non-decreasing $t \mapsto a_t$ such that
\begin{align}\label{cond:pert}
A := \sup_{t\ge 0}  \{ a_{2t+1} - a_t \}  < \infty \,.
\end{align}
Further, we get the matching \abbr{GHKL} 
if \eqref{cond:pert} applies for
\begin{align}\label{pertur-regime}
a_t =\sup_\ell \sup_{0=s_0<\cdots<s_\ell=t} \;\; \big\{ \sum_{i=0}^{\ell-1} 
\rho_{\bpi} (s_i,s_{i+1}) \big\} \,, \qquad 
\rho_{\bpi}(s,s') :=
\sup_{x \in V} \Big|\log \frac{\pi_{s'}(x)}{\pi_{s}(x)} \Big| 
\end{align}
(considering for \abbr{dtrw} only $s_i \in \N$). In particular,
for \abbr{csrw} with absolutely continuous $s \mapsto \pi_s(x)$ we 
have in \eqref{pertur-regime} 
absolutely continuous $a_t$ such 
that a.e. $\partial_t a_t = \sup_{x \in V} |\partial_t \log \pi_t(x)|$.
\begin{ppn}\label{perturbative}
Suppose $\{\blG_t\}$ uniformly elliptic of uniform volume growth $v(r)$ with $v(r)$ doubling, has the uniform Poincar\'{e} inequality and 
some $\{a_t\}$ satisfies \eqref{cond:pert}--\eqref{pertur-regime}.
\newline
(a) The \abbr{GHKU} holds for either \abbr{csrw} or 
uniformly lazy \abbr{dtrw}, without 
$\mu_{s,t}(\cdot)$, 
in \eqref{ghk-ubd}, 
and with some $C=C\left(A,\CP,\CV, \bar \alpha\right)$ finite.   \\
(b) The matching \abbr{GHKL} holds for \abbr{CSRW}, and 
subject to the discrete time \abbr{phi}, also for 
\abbr{dtrw}.
\end{ppn}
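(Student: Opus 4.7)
The plan is to reduce to the monotone setting of Theorem \ref{thm-main} via a gauge transformation. For a fixed starting time $s$, set $\tilde\pi_r^{(s)}(x,y) := e^{a_r - a_s}\pi_r(x,y)$ for $r \ge s$. This rescaling by a time-dependent scalar leaves the transition kernel $K_r(x,y) = \pi_r(x,y)/\pi_r(x)$ invariant, so the \abbr{csrw}/\abbr{dtrw} is unchanged. Since $\rho_{\bpi}(s,r) \le a_r - a_s$ by \eqref{pertur-regime}, we have $\pi_r(x)/\pi_s(x) \le e^{a_r-a_s}$, whence $r \mapsto \tilde\pi_r^{(s)}(x)$ is non-decreasing on $[s,\infty)$ for every $x$, starts at $\pi_s$, and satisfies $\tilde\pi_r^{(s)}(x)/\pi_s(x) \le e^{2A}$ whenever $r \le 2s+1$ by iterating \eqref{cond:pert}. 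Uniform ellipticity and the uniform Poincar\'e inequality are either intrinsic to $K_r$ or scale-invariant, volume doubling \eqref{vd} descends to $\tilde\pi^{(s)}$ pointwise in time, and the Nash profile satisfies $\cN_{K_r^2,\tilde\pi_r^{(s)}}(\sss) = \cN_{K_r^2,\pi_r}(\sss e^{-(a_r-a_s)}) \le N(\sss)$ for the $t$-uniform $N$ coming from the Poincar\'e inequality and volume growth. However, the uniform volume growth \eqref{d-set} fails globally for $\tilde\pi^{(s)}$ and holds only on $[s, 2s+1]$ with effective constant $e^A\CV$.

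For part (a), I would first establish the \abbr{ghku} on short intervals $[s,t]$ with $t \le 2s+1$ by running the proof of Theorem \ref{thm-main}(a), i.e., the Nash on-diagonal bound of Theorem \ref{nash-inc} followed by the Gaffney lemma and the complex interpolation of Proposition \ref{prop:hs}, all applied to the monotone reference $\tilde\pi^{(s)}$ on $[s,t]$. Each step gives constants depending on $A$, which are absorbed into $C=C(A,\CP,\CV,\bar\alpha)$. For long intervals $t > 2s+1$, pick $s' := \lceil(t+s)/2\rceil$, so that $t-s' \le s'+1$ and the short-interval \abbr{ghku} applies on $[s',t]$:
\begin{equation*}
K_{s',t}(z,y) \le \frac{C}{v(\sqrt{t-s'})}\exp\Bigl(-\frac{d(z,y)^2}{C(t-s')}\Bigr).
\end{equation*}
The semigroup identity $K_{s,t}(x,y) = \sum_z K_{s,s'}(x,z)K_{s',t}(z,y)$ together with a Gaffney-type concentration bound for $K_{s,s'}$ (using the monotone reference $\tilde\pi^{(s)}$ on $[s,s']$) transports the Gaussian tail to $K_{s,t}(x,y)$; doubling of $v$ then converts $v(\sqrt{t-s'})$ to $v(\sqrt{t-s})$ up to a constant. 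This splitting is iterated $O(\log_2((t-s)/s))$ times when $t \gg s$.

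For part (b), the matching \abbr{ghkl} follows from the \abbr{phi} and the \abbr{ghku} of part (a) via the standard Harnack-chain argument. I would obtain the \abbr{phi} for \abbr{csrw} in the perturbed setting by inspecting the proof of Theorem \ref{harnack} in Section \ref{sect:harnack} and localizing its dependence on $\CB = \sup_{t,x}\pi_t(x)/\pi_0(x)$: since the \abbr{phi} concerns a time cylinder of width $(\theta_4 R)^2$, only the conductance ratio within that cylinder should enter the Moser iteration, and that local ratio is bounded by $e^{O(A)}$ via \eqref{cond:pert}. For \abbr{dtrw} the discrete-time \abbr{phi} is taken as assumption, so the same Harnack-chain argument combined with part (a) yields the \abbr{ghkl}. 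The main obstacle is precisely this localization of the \abbr{phi}: carefully verifying that the Moser/De Giorgi iteration and the monotonicity-based energy estimates in the proof of Theorem \ref{harnack} only require the local ratio of conductances rather than the global $\CB$.
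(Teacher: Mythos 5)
Your rescaling $\tilde\pi_r^{(s)} := e^{a_r-a_s}\pi_r$ is exactly the paper's \eqref{dfn:pi-hat}, and the observations about its monotonicity and the scale-invariance of ellipticity, Poincar\'e and the Nash profiles match Lemma \ref{per-n}. However the rest of part (a) has a genuine gap. You restrict the short-interval \abbr{GHKU} to $t \le 2s+1$ and propose to patch $[s,t]$ for $t>2s+1$ by a midpoint split together with a ``Gaffney-type concentration'' on $K_{s,s'}$, iterated $O(\log_2((t-s)/s))$ times. This does not work as stated: the Gaffney lemma (Lemma \ref{gaffney}) only gives a $2\to 2$ bound on $K^\theta_{s,s'}$, not a pointwise concentration estimate on $K_{s,s'}(x,\cdot)$; turning the $2\to 2$ bound into a Gaussian tail is exactly what Proposition \ref{prop:hs} does, and you are trying to avoid it. More importantly, the iteration is unnecessary. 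Proposition \ref{prop:hs} is stated \emph{with} the $a_t$ correction built into the Gaffney hypothesis \eqref{dfn:gaffney-bd} and handles it internally using \eqref{cond:pert} and the doubling estimate $a_T - a_t \le A + A_o\log(T/t)$. The paper therefore never iterates: it establishes the $a_t$-corrected Gaffney bound by the computation $\|K^\theta_{v,u}\|_{L^2(\pi_u)\to L^2(\pi_v)} = e^{(a_u-a_v)/2}\|K^\theta_{v,u}\|_{L^2(\wh\pi_{u;v})\to L^2(\wh\pi_{v;v})} \le \exp(a_u - a_v + \chi(\theta)(u-v))$, feeds this directly into Proposition \ref{prop:hs} on $[0,T]$, and handles $[T,2T]$ by the dual operator with the monotone $\wh\pi_{r;T}$ (a single midpoint split at $T$ that is already part of Proposition \ref{prop:gauss-ubd}). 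The on-diagonal input comes from Lemma \ref{per-n}(a), where the single split at $v=(t+s)/2$ plus the $L^\infty$-contraction of $K_{s,v}$ costs only a factor $e^{2A}$.

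Your part (b) intuition is right but over-engineered. You propose to re-inspect the proof of Theorem \ref{harnack} and localize its $\CB$-dependence, flagging this verification as ``the main obstacle.'' There is no obstacle: the paper simply applies Theorem \ref{harnack} \emph{as stated} to $(K_t,\wh\pi_{t;T})$ on $[T,2T]$. Since $\wh\pi_{t;T}$ is non-decreasing there and $\wh\pi_{t;T}(x)/\wh\pi_{T;T}(x)\le e^{2A}$ by \eqref{cond:pert}, the hypotheses of Theorem \ref{harnack} hold with $\CB\le e^{2A}$, and since the \abbr{PHI} is a time-local statement and is invariant under the common-factor rescaling \eqref{dfn:pi-hat}, this yields the \abbr{PHI} for $(K_t,\pi_t)$ on $[T,2T]$ with $\gamma$ depending only on $A,\CP,\Cd,\alpha_e$. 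The matching \abbr{GHKL} then comes from Step II of the proof of Theorem \ref{thm-main}(b) on $[T,2T]$, plus the mass-concentration estimate \eqref{eq:pos} from the upper bound to bridge $[0,T]$, with no re-derivation of the Harnack machinery required.
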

\begin{remark} 
Starting at a uniformly elliptic $\blG_0$ of 
volume growth $v(r)$ with $v(r)$ doubling that 
satisfies the Poincar\'{e} inequality, 
Proposition \ref{perturbative} yields the 
matching \abbr{ghke} for the \abbr{csrw} on
$\pi_t(x,y) = \pi_0(x,y) e^{h_t(x,y)}$,
whenever $\sup_{t} \{ \|h_t\|_\infty \}$ and 
$\sup_t \{ (t+1) \|\partial_t h_t\|_\infty \}$ are finite.
In particular, this setting allows us to have \emph{forever oscillating} 
$t \mapsto \pi_t(x)$.
\end{remark}
While in Proposition \ref{perturbative} we have $a_n = O(\log n)$,
we next show that no such Gaussian estimates hold universally 
when $a_n$ grows as $O(n^{1/2+\iota})$ for some $\iota>0$. It is 
interesting to find a sharp threshold in the context of Proposition \ref{perturbative}, and in particular to determine 
whether $a_n \le O(n^{1/2})$ suffices for such Gaussian density bounds.

\begin{ppn}\label{ppn:conv} 
\noindent For any $\iota>0$, there exist uniformly bounded,
uniformly elliptic, and uniformly lazy, time-varying 
edge-conductances on $\bZ$, with 
\begin{align}
\limsup_{n\to\infty} \, \{ n^{-(1/2+\iota)} a_n \} < \infty\,,
\label{conter-conv}
\end{align}
such that 
neither \eqref{ghk-ubd} nor \eqref{ghk-lbd} hold
for the corresponding \abbr{DTRW} $\{X_{n}\}$.
\end{ppn}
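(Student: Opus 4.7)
The plan is to construct, for any given $\iota>0$, time-varying edge conductances on $\bZ$ that combine the drift-inducing oscillating configuration from the remark immediately preceding the proposition, applied on a sparse set of burst intervals, with lazy simple random walk elsewhere. Fix small $\eta,\veta>0$ and set burst-start times $a_k=k^2$ and burst lengths $L_k=\lceil k^{2\iota}\rceil$, so that $a_k+L_k<a_{k+1}$ for all large $k$. On each burst interval $[a_k,a_k+L_k)$, take $\pi_n(x,x+1)=1+(-1)^{n+x}\eta$ together with self-loop $\pi_n(x,x)=1+\veta\,\I\{n+x\text{ odd}\}$, exactly as in the preceding remark (no boundary adjustment is needed on $\bZ$). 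Outside the bursts set $\pi_n(x,x+1)=\pi_n(x,x)=1$. The resulting vertex conductances $\pi_n(x)\in\{3,3+\veta\}$ yield a uniformly bounded, uniformly elliptic and uniformly lazy family.

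Next I would bound $a_n$ from \eqref{pertur-regime}. Since each $\pi_n(x)$ only takes values in $\{3,3+\veta\}$, every change contributes at most $\log((3+\veta)/3)=O(\veta)$ to $\rho_{\bpi}$. Within burst $k$ the parity of $n+x$ flips each step, so the contribution of burst $k$ to $a_n$ is at most $O(\veta\,L_k)$, plus $O(\veta)$ at each of the two burst endpoints. With $K_n\asymp n^{1/2}$ bursts completed by time $n$ and $\sum_{k\le K_n}L_k\asymp K_n^{1+2\iota}\asymp n^{1/2+\iota}$, one obtains $a_n\le C_{\eta,\veta}\,n^{1/2+\iota}$, verifying \eqref{conter-conv}.

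Now analyze $\{X_n\}$. Between bursts it is a lazy simple random walk with zero drift; within burst $k$ the two-types analysis of the preceding remark, adapted to $\bZ$, classifies states by the parity of $n+X_n$ into types $A$ (even) and $B$ (odd), showing that the induced two-state Markov chain is ergodic with stationary measure proportional to $(3(1+\veta),3+\veta)$ and per-step drift $v=\eta\veta/(3+2\veta)>0$ at stationarity. Since that parity chain mixes in $O(1)$ steps and $L_k\to\infty$, each burst contributes expected displacement $v\,L_k\,(1+o_k(1))$; summing gives $\E[X_n]=(v+o(1))\,n^{1/2+\iota}$. Writing $X_n$ minus its predictable compensator as a martingale with increments bounded by $2$, standard Azuma--Hoeffding concentration yields $\Var(X_n)=O(n)$ and concentration of $X_n$ on an interval of width $O(\sqrt n)$ around $\E[X_n]$.

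Finally, the failure of both Gaussian estimates at $(s,t)=(0,n)$ follows. The reference $\mu_{0,n}(y)=\sum_x\pi_0(x)K_{0,n}(x,y)$ is uniformly bounded above and bounded below away from zero: since $\pi_0\equiv 3$ and the approximate translation symmetry $x\mapsto x+2$ makes $\mu_{0,n}$ essentially constant in $y$, with the contributions coming from $x$ in an $O(\sqrt n)$ window around $-vn^{1/2+\iota}$ (each $K_{0,n}(x,0)\sim n^{-1/2}$ by the martingale concentration), so $\mu_{0,n}(0)\in[c^{-1},c]$. Then \eqref{ghk-lbd} at $x=y=0$ forces $K_{0,n}(0,0)\gtrsim n^{-1/2}$, yet Azuma gives $K_{0,n}(0,0)\le\P(|X_n-\E X_n|\ge\E X_n/2)\le 2e^{-c n^{2\iota}}\ll n^{-1/2}$, contradicting \eqref{ghk-lbd} for large $n$. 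Conversely, mass conservation and the concentration of $X_n$ force some $y_n$ with $|y_n-vn^{1/2+\iota}|=O(\sqrt n)$ satisfying $K_{0,n}(0,y_n)\gtrsim n^{-1/2}$, while the right-hand side of \eqref{ghk-ubd} at this $y_n$ is at most $O(n^{-1/2})\exp(-v^2 n^{2\iota}/C)\ll n^{-1/2}$, contradicting \eqref{ghk-ubd}. The main obstacle is the walker analysis of the third paragraph: one must carefully verify that the two-state parity chain reaches stationarity rapidly within each burst, irrespective of the random parity distribution inherited from the preceding simple-random-walk segment, and that the drift contributions across bursts add up with sufficient concentration; this follows from exponential mixing of the two-state chain combined with standard martingale concentration, but requires the explicit bookkeeping sketched above.
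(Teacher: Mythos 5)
Your proposal follows the same overall strategy as the paper's — refine the oscillating $(-1)^{n+x}$ checkerboard conductances of \cite{HK} so that the walk acquires a drift of order $n^{1/2+\iota}$ while $a_n$ of \eqref{pertur-regime} stays $O(n^{1/2+\iota})$, then use exponential concentration of $X_n$ at scale $\sqrt n$ to contradict both the \abbr{GHKU} (which would confine $X_n$ within $O(\sqrt{n})$ of the origin) and the \abbr{GHKL} at $x=y=0$ — but the concrete construction is genuinely different. The paper keeps the drift mechanism ``on'' at all times with a time-decaying self-loop amplitude $\delta_n\downarrow 0$ chosen so $\sum_{k\le n}\delta_k\sim n^{1/2+\iota}$, and then reads off $v_n\ge\frac{2\eta}{9}a_n$ from the pair $(\Delta_n(A),\Delta_n(B))$; you instead keep a fixed amplitude $\eta,\varepsilon$ but switch it on only during sparse bursts $[k^2,\,k^2+\lceil k^{2\iota}\rceil)$, with lazy \abbr{SRW} in between. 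Both achieve $a_n$, $\E X_n\asymp n^{1/2+\iota}$ for $\iota\in(0,1/2)$, and the burst version has the mild extra bookkeeping of verifying that the two-type parity chain re-equilibrates in $O(1)$ steps at each burst opening, which you note. Two small points where the paper's argument is tighter than your sketch: first, for the failure of \eqref{ghk-lbd} the paper does not need a pointwise lower bound on $\mu_{0,n}(0)$ (your justification via ``$K_{0,n}(x,0)\sim n^{-1/2}$ from Azuma'' is really a local-CLT-type claim that Azuma alone does not give) — it suffices to sum \eqref{ghk-lbd} over $|y|\le\sqrt{n}$ and use $\mu_{0,n}(\B(0,\sqrt{n}))\asymp\sqrt{n}$, which follows from the $2$-periodicity in $y$ of $\mu_{0,n}$ and the normalization $\mu_{0,n}(0)+\mu_{0,n}(1)=6$; second, watch the notational clash between your burst start times $a_k=k^2$ and the quantity $a_n$ of \eqref{pertur-regime}. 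With these cosmetic fixes your alternative construction gives a correct proof.
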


\medskip

The rest of the paper is organized as follows. 
Section \ref{sect:nash} explores a general framework using evolving reference measures for obtaining on-diagonal transition probability upper bounds from Nash
profiles of underlying graphs and can be read independently of the 
rest of this paper. Section \ref{sect:gauss} adapts to our time-inhomogeneous setting the perturbation-interpolation technique of \cite[Section 2]{HS} 
for deriving off-diagonal upper bounds (from a given on-diagonal upper 
bound), concluding with the \abbr{ghku} of Theorem \ref{thm-main}(a).
We establish in Section \ref{sect:harnack} the \abbr{phi}
of Theorem \ref{harnack} 
and the regularity estimate
of Proposition \ref{ppn:holder}.
Section \ref{sect:conseq} then 
complete the derivation of Theorem \ref{thm-main}, whereas
Section \ref{sect:proof-ppn} deals with the 
perturbative regime of Propositions \ref{perturbative} and \ref{ppn:conv}.

\end{section}

\begin{section}{Nash Inequalities} \label{sect:nash} 

Recall the definition of the Nash profile $\mathcal{N}_{K,\pi}$ as in \eqref{nash-profile}. 
Given a dynamic \eqref{def:k-mn} for 
Markov kernels $\{K_n\}$ the Nash method relies on finding auxiliary 
Markov kernels $Q_{n}$ reversible for some $\nu_{n} \in \cM_+(V)$, having 
useful Nash profiles $\cN_{Q_n,\nu_n}(\st)$ as well as
the following contraction properties.
\begin{assumption}\label{nash-assum} 
Markov kernels $Q_{n}$ are reversible for $\nu_{n} \in \cM_+(V)$ 
and for any $f \in L^2(\nu_n)$
\begin{equation}
\left\Vert K_{n}f\right\Vert _{L^{2}(\nu_{n-1})}^{2} + \cE_{Q_n,\nu_n}(f,f) 
\le \left\Vert f\right\Vert_{L^{2}(\nu_{n})}^{2} \,.
\label{eq:increment}
\end{equation}
Further, for any $f \in L^1(\nu_n)$ 
\begin{align}
\left\Vert K_{n}f\right\Vert _{L^{1}(\nu_{n-1})}\le\left\Vert f\right\Vert _{L^{1}(\nu_{n})}.\label{contraction}
\end{align}
In particular $K_n$ must be a bounded operator from 
$L^p(\nu_n)$ to $L^p(\nu_{n-1})$ for $p=1,2$.
\end{assumption}
We proceed to provide two canonical examples (of pairs $Q_n,\nu_n$), for 
which Assumption \ref{nash-assum} holds. 
\begin{ex}\label{stab-ex} 
If $\nu \in \cM_+(V)$ 
and Markov kernel $K$ are such that $\mu=(\nu K) \in \cM_+(V)$, 
then $f \mapsto Kf$ of \eqref{def:Kf} extends uniquely to the non-negative, 
bounded linear map $K_{\mu\to\nu}:L^{2}(\mu) \to L^{2}(\nu)$. Its dual 
$K_{\nu\to\mu}^{\star}:L^{2}(\nu)\to L^{2}(\mu)$ then satisfies
\begin{equation}\label{dfn:dual}
\left\langle h, K_{\nu \to \mu}^{\star} g \right\rangle _{\mu}
= \left\langle K h, g \right\rangle _{\nu},\quad \forall g\in L^{2}(\nu),
h \in L^{2}(\mu)\,,
\end{equation}
with the self-adjoined non-negative operator $Q=K^\star K$ such that
\begin{align}\label{eq:dual}
\left \langle h, Q f \right \rangle_{\mu} = \left \langle K h, K f 
\right \rangle_\nu ,
\quad \forall f,h \in L^2(\mu) \,.
\end{align}
Taking $h=\delta_x$ and $f=\delta_y$ in \eqref{eq:dual}, we further see that 
\begin{equation}\label{dfn:Q}
Q(x,y) := \frac{1}{\mu(x)} \sum_z \nu(z) K(z,x) K(z,y) \,,
\end{equation}
is a $\mu$-reversible, Markov transition kernel.
Further, for $f \in L^2(\mu)$ we have from \eqref{eq:dual} that
\begin{equation}
\cE_{Q,\mu}(f,f)= \left\Vert f\right\Vert_{L^{2}(\mu)}^{2}
- \left\Vert K f\right\Vert _{L^{2}(\nu)}^{2} \,.
\label{eq:l2-ident}
\end{equation}
Since $\mu=(\nu K)$ we also have that
\begin{align}
\left\Vert K f\right\Vert _{L^{1}(\nu)} 
 = \sum_x \nu(x) | \sum_y K(x,y) f(y) | & \le 
\sum_{x,y} \nu(x) K(x,y) |f(y)| \nonumber \\
& = \sum_y (\nu K)(y) |f(y)| =
\left\Vert f\right\Vert _{L^{1}(\mu)}.
\label{eq:l1-ineq}
\end{align}
Thus, Assumption \ref{nash-assum} holds for $\mu_{n}:=\mu_{0,n}$
of \eqref{evol-meas} and the corresponding $\mu_n$-reversible 
Markov kernels $Q_n := K_{\mu_{n-1} \to \mu_n}^{\star} K_n$, 
provided 
\begin{equation}\label{unif-elip}
\mu_n \in \cM_+(V)\,, \qquad \qquad n=0,1,2,\ldots  \,.
\end{equation}
Starting at any $\mu_0 \in \cM_+(V)$, one has \eqref{unif-elip} 
for uniformly lazy walks, where 
$\underline{\mu_n} \ge \widehat{\alpha} \, \underline{\mu_{n-1}}$ 
are strictly positive, since 
$$ 
\widehat{\alpha} := \inf_{n,y} \sum_x K_n(x,y) \ge \alpha_l 
$$
of \eqref{dfn:unif-lazy}, and by induction having per $n,y$ 
only finitely many $x \in V$ for which 
$K_n(x,y) > 0$ guarantees the finiteness of $\mu_n(y)$.

If $\{\mu_{0,n}\}$ are $c$-stable (see \eqref{eq:stab}), 
then similarly to the considerations of \cite{SZ2},
one may estimate the Nash profile $\cN_{Q_{n},\mu_n}(\st)$ 
in terms of say $\cN_{Q_1,\mu_1}(\cdot)$. However,
not withstanding Conjecture \ref{conj-stab}
we have no systematic way towards such $c$-stability, 
without which we have little control on  
$\cN_{Q_{n},\mu_n}(\st)$. 
\end{ex}

\begin{ex}\label{inc-ex} 
If the Markov kernel $K$ has an invariant measure $\pi \in \cM_+(V)$
then considering Example \ref{stab-ex} for $\nu=\pi$ results with
$\mu = (\nu K) = \pi$. Suppose now that $K_{n}$ have invariant measures
$\pi_n \in \cM_+(V)$ such that $n \mapsto \pi_n(x)$
are non-decreasing. Then $n \mapsto \|K f\|_{L^p(\pi_{n})}$ are
non-decreasing for $p=1,2$, hence from \eqref{eq:l2-ident}-\eqref{eq:l1-ineq} 
we deduce that Assumption \ref{nash-assum} holds for $\nu_{n}=\pi_{n}$.
If further $K_n$ is $\pi_n$-reversible, as in Theorem \ref{nash-inc}, 
then from \eqref{dfn:Q} we see that $Q_{n}=K_{n}^2$.
\end{ex}

In view of Example \ref{inc-ex}, part (a)
of Theorem \ref{nash-inc} is a 
special case of our next proposition dealing with the more general 
setting of Assumption \ref{nash-assum}. 

\begin{ppn}\label{nash-inc-gen}
Suppose in addition to Assumption \ref{nash-assum}, that for non-decreasing 
$\sss \mapsto N(\sss)$  
\begin{align}\label{eq:uniform-nash}
N(\sss) \ge \sup_k \{ \mathcal{N}_{Q_k,\nu_k}(\sss) \}  & \,, 
\qquad \qquad  
\inf_k \{ \,\underline{\nu_k} \,\} \ge c_\star > 0 \,,
\\
&
\inf_{\sss \ge \sss_0} \Big\{ \frac{N(\CN \, \sss)}{N(\sss)} \Big\} \ge 2 \,.
\label{eq:regular-nash}
\end{align}
Then the
bound \eqref{eq:diag-ubd-nash} holds
with $\nu_n$ instead of $\pi_n$. 
%
\end{ppn}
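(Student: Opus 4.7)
The plan is to adapt the Nash--Coulhon ultracontractivity method to the inhomogeneous non-reversible setting of Assumption \ref{nash-assum}. I will proceed in three stages: first derive a Nash differential inequality along the backward chain $f_k = K_{t-k,t}f$; next integrate it to obtain an $L^1(\nu_t)\to L^2(\nu_s)$ smoothing bound $\|K_{s,t}\|_{L^1(\nu_t)\to L^2(\nu_s)}^2\le\psi_1(t-s)$; and finally upgrade this to the pointwise bound on $K_{s,t}(x,y)/\nu_t(y)$ via a bilinear Cauchy--Schwarz identity through a middle time.

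For the first two stages, fix $f\in L^1(\nu_t)\cap L^2(\nu_t)$ and set $f_k:=K_{t-k,t}f$, $u_k:=\|f_k\|_{L^2(\nu_{t-k})}^2$, $h_k:=\|f_k\|_{L^1(\nu_{t-k})}$. Assumption \ref{nash-assum} yields $h_k\le h_{k-1}$ and
\[
u_k\le u_{k-1}-\cE_{Q_{t-k+1},\nu_{t-k+1}}(f_{k-1},f_{k-1}),
\]
while the Nash inequality associated with \eqref{nash-profile}, applied to $f_{k-1}$ with $\cN_{Q_{t-k+1},\nu_{t-k+1}}\le N$, lower-bounds the Dirichlet form by $u_{k-1}/N(h_{k-1}^2/u_{k-1})\ge u_{k-1}/N(h_0^2/u_{k-1})$. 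Normalizing $h_0=1$ and writing $\phi_k:=1/u_k$ turns the recursion into $\phi_k-\phi_{k-1}\ge\phi_{k-1}/(N(\phi_{k-1})-1)\ge\phi_{k-1}/N(\phi_{k-1})$ (once $N(\phi_{k-1})\ge 2$, which is ensured by the doubling regularity \eqref{eq:regular-nash} after a warm-up of length bounded by $\CN$ and $\sss_0/c_\star$). Comparing with the continuous flow $d\phi/dk=\phi/N(\phi)$ and telescoping $F(\phi_k;c_\star,N)-F(\phi_{k-1};c_\star,N)$ yields $F(\phi_k;c_\star,N)\ge c(k-C_0)$, where $F$ is the integral in \eqref{eq:N-psi}. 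Hence $u_k\le\psi_1(k):=C_1/F^{-1}(k;c_\star,N)$, i.e.\ $\|K_{s,t}\|_{L^1(\nu_t)\to L^2(\nu_s)}^2\le\psi_1(t-s)$.

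For the third stage, set $\tilde k_{s,t}(x,y):=K_{s,t}(x,y)/\nu_t(y)$. Since $K_{s,u}(x,z)=\nu_u(z)\tilde k_{s,u}(x,z)$, the semigroup identity becomes the bilinear form
\[
\tilde k_{s,t}(x,y)=\sum_z\nu_u(z)\,\tilde k_{s,u}(x,z)\,\tilde k_{u,t}(z,y)=\bigl\langle\tilde k_{s,u}(x,\cdot),\,\tilde k_{u,t}(\cdot,y)\bigr\rangle_{\nu_u},
\]
and Cauchy--Schwarz in $L^2(\nu_u)$ bounds it by the product of two $L^2(\nu_u)$-norms. The second factor, $\|\tilde k_{u,t}(\cdot,y)\|_{L^2(\nu_u)}=\|K_{u,t}(\delta_y/\nu_t(y))\|_{L^2(\nu_u)}$, is controlled by $\sqrt{\psi_1(t-u)}$ via the previous stage, since the input $\delta_y/\nu_t(y)$ has $L^1(\nu_t)$-norm $1$ and initial $\phi_0=\nu_t(y)\ge c_\star$. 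For the first factor, the Markov identity $\sum_z\nu_u(z)\tilde k_{s,u}(x,z)=1$ makes $\tilde k_{s,u}(x,\cdot)$ a $\nu_u$-probability density, so a second Cauchy--Schwarz gives $\|\tilde k_{s,u}(x,\cdot)\|_{L^2(\nu_u)}^2\le\sup_z\tilde k_{s,u}(x,z)\le A(s,u)$, where $A(s,u):=\sup_{x',y'}\tilde k_{s,u}(x',y')$. Choosing $u=s+2(t-s)/3$ produces the self-improving estimate
\[
A(s,t)\le\sqrt{A\bigl(s,\,s+2(t-s)/3\bigr)\,\psi_1\bigl((t-s)/3\bigr)}.
\]
Iterating this inequality against the trivial boundary $A(s,s)\le 1/c_\star$ and invoking the quasi-doubling of $\psi_1$ inherited from \eqref{eq:regular-nash}, the telescoping product converges and delivers $A(s,t)\le\CN'\,\psi((t-s)/3)$, completing the proof.

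The main obstacle is the third stage. In the reversible time-homogeneous theory duality gives $\|K\|_{L^2\to L^\infty}=\|K^\ast\|_{L^1\to L^2}$, and Coulhon's three-step ansatz feeds the Nash $L^1\to L^2$ decay directly into the pointwise bound. Here the absence of reversibility and of a common invariant measure breaks this duality, and the ``forward'' norm $\|\tilde k_{s,u}(x,\cdot)\|_{L^2(\nu_u)}$ is not controlled by Nash alone. The observation that closes the argument is that $\tilde k_{s,u}(x,\cdot)$ is a $\nu_u$-probability density, so a second Cauchy--Schwarz reduces its $L^2(\nu_u)$-norm to $A(s,u)^{1/2}$; the regularity hypothesis on $N$ then makes the bootstrap converge.
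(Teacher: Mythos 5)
Your proposal is correct and is essentially the paper's own argument: Stages 1--2 reproduce Lemma \ref{diff-eq} (the Nash differential inequality for $J_n(j)=\|K_{m,n}g\|^2_{L^2(\nu_m)}$, using \eqref{eq:increment} and \eqref{contraction}), and Stage 3 reproduces Lemma \ref{A_n} together with the ratio bound \eqref{eq:psi-ratio-bd} --- in particular your ``probability-density'' observation $\|\tilde{k}_{s,u}(x,\cdot)\|_{L^2(\nu_u)}^2\le \sup_z\tilde{k}_{s,u}(x,z)$ is exactly the paper's interpolation $\|K^\star_{m,\ell}\|_{L^1\to L^2}^2\le\|K^\star_{m,\ell}\|_{L^1\to L^\infty}\|K^\star_{m,\ell}\|_{L^1\to L^1}$ combined with the $L^1$-contraction of the adjoint. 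The only stylistic divergence is that you iterate the bootstrap $A(T)\le\sqrt{A(2T/3)\,\psi_1(T/3)}$ explicitly to a geometric product, while the paper packages the same bootstrap as an abstract statement about constants $A_n$ satisfying \eqref{eq:A_psi} (which is slightly more flexible and is reused in Example \ref{inc-diag-ubd}); the discrete-time rounding of the intermediate time $u$ and the ``warm-up'' to reach $N\ge 2$ are handled a bit more carefully in the paper via the continuous comparison \eqref{eq:DE}, but these are technical and do not change the substance.
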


Turning to the proof of Proposition \ref{nash-inc-gen}, note that
fixing hereafter $\{\nu_n\} \subseteq \cM_+(V)$, part of Assumption 
\ref{nash-assum} is having bounded operators   
$K_n : L^2(\nu_n) \to L^2(\nu_{n-1})$, and 
hence the dual (adjoint) non-negative operators 
$$
(K_n)^\star_{\nu_{n-1} \to \nu_{n}}: L^2(\nu_{n-1}) \to L^2(\nu_n)
$$
as in \eqref{dfn:dual}, using   
$K_n^\star$ for $(K_n)^\star_{\nu_{n-1} \to \nu_{n}}$ whenever the choice of 
$\{\nu_n\}$ is clear. For any $0 \le m <n$, 
\begin{equation*}
K_{m,n}^{\star} := K_n^{\star} K_{n-1}^{\star} \cdots K^\star_{m+1} \,,
\end{equation*}
is the adjoint of $K_{m,n} : L^{2}(\nu_{n}) \to L^{2}(\nu_{m})$
of \eqref{def:k-mn}, about which we have the following bound.
\begin{lem}\label{diff-eq}
Under Assumption \ref{nash-assum}, if $N_k(\st) \ge \cN_{Q_k,\nu_k}(\st)$ 
is non-decreasing in $\st$, 
then for any $0 \le m  \le n$,
\begin{equation}\label{eq:1to2-bd}
\psi_{n}(n-m)^{\frac{1}{2}} \ge 
\left\Vert K_{m,n}\right\Vert _{L^{1}(\nu_{n})\to L^{2}(\nu_{m})}=\left\Vert K_{m,n}^{\star}\right\Vert _{L^{2}(\nu_{m})\to L^{\infty}(\nu_{n})} \,,
\end{equation}
where starting at any $\psi_n(0) \ge 1/\underline{\nu_n}$ 
we inductively set $\psi_n(\cdot)$ such that for 
$F(\cdot)$ of \eqref{eq:N-psi},
\begin{equation}
\frac{1}{\psi_{n}(j+1)} := F^{-1}\Big(1;\frac{1}{\psi_n(j)},N_{n-j}(\cdot)\Big), 
\qquad j=0,\ldots,n-1 \,.
\label{eq:DE}
\end{equation}
\end{lem}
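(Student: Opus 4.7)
The equality in \eqref{eq:1to2-bd} is the standard duality of operator norms $L^1(\nu_n)\to L^2(\nu_m)$ versus $L^2(\nu_m)\to L^\infty(\nu_n)$, valid because $K_{m,n}^\star$ is by construction the adjoint of $K_{m,n}$ with respect to the pair $(\nu_n,\nu_m)$. For the quantitative bound, by positivity of each $K_k$ it suffices to consider $f\ge 0$ with $\|f\|_{L^1(\nu_n)}\le 1$ and to prove $u_m\le \psi_n(n-m)$, where $u_j:=\|K_{j,n}f\|_{L^2(\nu_j)}^2$. The plan is to iterate a one-step decay for $u_j$ backwards from $j=n$ down to $j=m$ and then compare the resulting recursion with the defining relation \eqref{eq:DE}.

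\textbf{The one-step decay.} At the base, $u_n=\|f\|_{L^2(\nu_n)}^2\le \|f\|_\infty\|f\|_{L^1(\nu_n)}\le 1/\underline{\nu_n}\le \psi_n(0)$. For the step from $j+1$ to $j$, set $g:=K_{j+1,n}f$, so $\|g\|_{L^2(\nu_{j+1})}^2=u_{j+1}$ and $\|g\|_{L^1(\nu_{j+1})}\le 1$ by \eqref{contraction}. Since $\sss\mapsto N_{j+1}(\sss)$ is non-decreasing and $\|g\|_{L^1}^2/\|g\|_{L^2}^2\le 1/u_{j+1}$, the Nash inequality gives $\cE_{Q_{j+1},\nu_{j+1}}(g,g)\ge u_{j+1}/N_{j+1}(1/u_{j+1})$. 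Combined with \eqref{eq:increment} applied to $g$, this yields
\[
u_j\le u_{j+1}-\cE_{Q_{j+1},\nu_{j+1}}(g,g)\le u_{j+1}\Bigl(1-\frac{1}{N_{j+1}(1/u_{j+1})}\Bigr).
\]
Setting $v_j:=1/u_j$ and using $-\log(1-x)\ge x$, this rearranges to $\log v_j-\log v_{j+1}\ge 1/N_{j+1}(v_{j+1})$.

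\textbf{Closing the induction.} Monotonicity of $N_{j+1}$ now gives
\[
F(v_j;v_{j+1},N_{j+1})=\int_{v_{j+1}}^{v_j}\frac{N_{j+1}(\sss)}{\sss}\,d\sss\ge N_{j+1}(v_{j+1})\bigl(\log v_j-\log v_{j+1}\bigr)\ge 1 .
\]
Assume inductively that $v_{j+1}\ge a:=1/\psi_n(n-j-1)$, and write $c:=1/\psi_n(n-j)$, so \eqref{eq:DE} reads $F(c;a,N_{j+1})=1$. By additivity of $F$ in the endpoints together with $F(v_{j+1};a,N_{j+1})\ge 0$,
\[
F(v_j;a,N_{j+1})=F(v_j;v_{j+1},N_{j+1})+F(v_{j+1};a,N_{j+1})\ge 1=F(c;a,N_{j+1}),
\]
and strict monotonicity of $F(\cdot;a,N_{j+1})$ in the upper limit forces $v_j\ge c$. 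Iterating down to $j=m$ gives $u_m\le \psi_n(n-m)$, which is \eqref{eq:1to2-bd}.

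\textbf{Main difficulty.} The only genuinely non-trivial point, beyond the standard Nash--Coulhon $L^1\to L^2$ iteration in the stationary case, is the bookkeeping of reference measures across time: Assumption \ref{nash-assum} is designed precisely so that the $L^2$-energy drop \eqref{eq:increment} from $\nu_{j+1}$ to $\nu_j$ produces a Dirichlet form for the \emph{same} pair $(Q_{j+1},\nu_{j+1})$ whose Nash profile is controlled by $N_{j+1}$, allowing the discrete recursion for $v_j$ to be compared pointwise with \eqref{eq:DE}. The one degenerate case is $u_{j+1}=0$ for some $j$, in which case $u_k=0$ for all $k\le j+1$ by \eqref{eq:increment} and the bound holds trivially.
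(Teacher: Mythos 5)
Your proof is correct, and it shares with the paper the essential starting move: combining \eqref{eq:increment} with the Nash profile bound for $Q_{j+1}$ and the $L^1$-contraction \eqref{contraction} to get the one-step decay $u_j\le u_{j+1}\bigl(1-1/N_{j+1}(1/u_{j+1})\bigr)$, plus the $L^1$--$L^2$--$L^\infty$ nesting to seed the recursion at $j=n$. Where you diverge is in how you compare the discrete recursion for $u_j$ with the sequence $\psi_n$ of \eqref{eq:DE}. The paper passes to the piecewise-linear interpolation of $J_n(j)=u_{n-j}$ and to the continuous interpolation of $\psi_n$ via $F^{-1}$, checks that the latter satisfies the autonomous ODE $-\psi_n'=\psi_n/N_{n-j}(1/\psi_n)$ on each $(j,j+1)$, and invokes a standard ODE comparison. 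You instead stay entirely discrete: from $-\log(1-x)\ge x$ you extract $\log v_j-\log v_{j+1}\ge 1/N_{j+1}(v_{j+1})$, then use monotonicity of $N_{j+1}$ to lower-bound the increment $F(v_j;v_{j+1},N_{j+1})$ by $1$, and close with the additivity of $F$ in the endpoints plus strict monotonicity of $F(\cdot;a,N_{j+1})$. Both routes work; yours trades the interpolation-and-calculus step of the paper for the elementary log inequality and the additivity of $F$, which arguably makes the comparison with \eqref{eq:DE} more transparent and avoids having to differentiate $F^{-1}$. You also correctly isolate the only degenerate case ($u_{j+1}=0$), and the reduction to $f\ge 0$ by positivity of the kernels is a harmless simplification the paper does not bother with. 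One cosmetic point: when you invoke ``strict monotonicity of $F(\cdot;a,N_{j+1})$'', this requires $N_{j+1}>0$ on the relevant range, which is implicit in the hypothesis that $F^{-1}$ in \eqref{eq:DE} is well-defined; it would be worth a phrase to make that explicit.
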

\begin{proof} Fix $g : V \to \R$ 
such that $\left\Vert g\right\Vert _{L^{1}(\nu_{n})}=1$ and let
\[
J_{n}(j) :=\left\Vert K_{m,n} g \right\Vert _{L^{2}(\nu_m)}^{2} 
\qquad \mbox{with} \qquad 
j = n - m  \,.
\]
Since $K_{n,n}=I$ we have that
\begin{equation}\label{eq:jn-bd-0}
J_{n}(0)=\left\Vert g \right\Vert _{L^{2}(\nu_{n})}^{2}\le
\frac{1}{\underline{\nu_{n}}} \sum_x [\,\nu_n(x) |g(x)|\,]^2 \le
\frac{1}{\underline{\nu_{n}}} \le \psi_n(0) 
\,,  
\end{equation}
is finite (since $\nu_n \in \cM_+(V)$). In particular, 
here $L^1(\nu_n) \subseteq L^2(\nu_n) \subseteq L^\infty(\nu_n)$  
and the identity on the \abbr{rhs} of \eqref{eq:1to2-bd} 
follows by duality between $L^1(\nu_n)$ and $L^\infty(\nu_n)$. 
Turning to prove inductively for $j=1,2,\ldots$
the inequality on its \abbr{lhs}, recall that $K_{m-1,n}=K_m K_{m,n}$ 
hence considering \eqref{eq:increment} for $f_m := K_{m,n} g$ 
in $L^2(\nu_{m})$ and the definition \eqref{nash-profile} of 
$\cN_{Q_m,\nu_m} \le N_m$
we find that 
\begin{align*}
J_{n}(j)-J_{n}(j+1) & = \Vert f_m \Vert^2_{L^2(\nu_m)} - 
\Vert K_{m} f_m \Vert^2_{L^2(\nu_{m-1})} \ge \cE_{Q_m,\nu_m}(f_m,f_m)
\ge \frac{\Vert f_m \Vert^2_{L^2(\nu_m)}}{N_m(\st)} 
\end{align*}
provided $\st$ is such that 
\[
\Vert f_m \Vert^2_{L^1(\nu_m)} \le \st \Vert f_m \Vert^2_{L^2(\nu_m)} \;.
\] 
Next, iterative consideration of \eqref{contraction} down from $n$ to $m$
yields that 
$$
\Vert f_m \Vert_{L^1(\nu_m)} \le \Vert g \Vert_{L^1(\nu_n)}=1 \;,
$$
hence by the definition of $J_n(j)$ we can use $\st=1/J_n(j)$ to deduce that 
\[
J_{n}(j)-J_{n}(j+1)\ge H_{n-j}(J_n(j)) \,, \qquad j = 0,\ldots,n-1 \,,
\]
where $H_{m}(u):=u/N_m(1/u)$.
Since $\st \mapsto N_{m}(\st)$ is non-decreasing, so is the 
positive $u \mapsto H_m(u)$ and consequently the 
piece-wise linear interpolation of $J_n(\cdot)$ 
to $[0,n]$ satisfies
\[
- \frac{d}{du} \Big\{ J_n(u) \Big\} 
\ge H_{n-j}(J_n(u)) \,, \qquad \forall u \in (j,j+1) \,.
\] 
It is easy to verify that 
\[
-\frac{d}{du} \Big\{ \psi_n(u) \Big\} 
= H_{n-j}(\psi_n(u)) \,, \qquad \forall u \in (j,j+1) 
\]
for the continuous interpolation of $\psi_n(\cdot)$ of \eqref{eq:DE} according to $\frac{1}{\psi_n(u)}:=F^{-1}(u-j; \frac{1}{\psi_n(j)}, N_{n-j}(\cdot))$ 
for $u\in(j, j+1)$, which by
\eqref{eq:jn-bd-0} starts at $\psi_n(0) \ge J_n(0)$.
Thus, the continuous $\psi_n(u)-J_n(u)$ is
non-negative on $[0,n]$ and in particular
$J_{n}(j)\le\psi_{n}(j)$ for all $0 \le j \le n$. This 
holds whenever $\Vert g \Vert_{L^1(\nu_n)}=1$, so
$\psi_n(j)$ control the relevant operator norms.   
\end{proof}

\begin{remark}\label{regularity}
With $\underline{\nu_k}>0$, Proposition \ref{nash-inc-gen} is equivalent to 
the operator norm bound
\[
\left\Vert K_{m,n}^{\star}\right\Vert _{L^{1}(\nu_{m})\to L^{\infty}(\nu_{n})} 
=
\left\Vert K_{m,n}\right\Vert _{L^{1}(\nu_{n})\to L^{\infty}(\nu_{m})}
\le \CN' \psi\left(\frac{n-m}{3}\right)  
\,.
\]

Even without \eqref{eq:regular-nash}, by Lemma \ref{diff-eq} we get from 
\eqref{eq:uniform-nash} that 
\begin{equation}\label{eq:bd-psi-half}
\left\Vert K_{m,n}\right\Vert _{{L^{1}(\nu_{n})}\to L^{\infty}(\nu_{m})}\le c^{-1/2}_\star
\left\Vert K_{m,n}\right\Vert _{L^{1}(\nu_{n})\to L^{2}(\nu_{m})}\le c^{-1/2}_\star\psi^{1/2}(n-m) \,.
\end{equation}
If $\psi(\st)\le \psi(\delta \st)^2$ for some $\delta>0$, 
this already yields the bound \eqref{eq:diag-ubd-nash} of Proposition \ref{nash-inc-gen} (albeit with $\psi(\delta \cdot)$ instead of $\psi(\cdot/3)$). For 
example, replacing the (fast) growth 
assumption \eqref{eq:regular-nash} by 
\begin{equation}\label{eq:slow-nash}
\sup_{\sss \ge c_\star} \Big\{ \frac{N(\sss^2)}{N(\sss)} \Big\}  
\le \CN 
\end{equation}
yields for $\delta=1/(1+2\CN)>0$ and $u=1/\psi(\delta \st)$ that 
\[
\int_{c_\star}^{u^2} \frac{N(\sss)}{\sss}d\sss \le   
(1+ 2\CN) \int_{c_\star}^{u} \frac{N(\sss)}{\sss}d\sss \,,
\]
or equivalently, that 
$\psi(\st) \le \psi(\delta \st)^2$.
The bound \eqref{eq:slow-nash} applies for 
$N(\sss)=A (\log (1+\sss))^\beta$ with $\beta>0$, 
yielding the stretched exponential decay $\psi(\st)= A' \exp(-b\st^{1/(1+\beta)})$ 
(see \cite{BPS} and the references therein for more details about
this type of Nash profiles).
\end{remark}
Unlike the setting of Remark \ref{regularity},
for our main focus the Nash profile $N(\cdot)$ induced by the doubling function $v(r)$ has at least polynomial growth, 
namely satisfies \eqref{eq:regular-nash}, so we improve the bound 
\eqref{eq:bd-psi-half} by adapting next
the argument of \cite[proof of Theorem 2.3]{SZ2}.
\begin{lem}\label{A_n}
In the setting of Lemma \ref{diff-eq} if $A_{n} \ge 0$ 
are such that for $n \in [m,N]$ 
\begin{equation}
A_{n}^2 \le \sup_{m \le \ell \le n} 
\Big\{ \frac{A_{\ell}}{\psi_{n}(n-\ell)} \Big\} \; , 
\label{eq:A_psi}
\end{equation}
then 
\[
M_N := \sup_{n\in [m,N]}\big\{ A_{n}\left\Vert K_{m,n}\right\Vert _{L^{1}(\nu_{n})\to L^{\infty}(\nu_{m})}\big\}\le 1 \,.
\]
\end{lem}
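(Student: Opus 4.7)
My plan is to introduce the shorthand $B_n:=\|K_{m,n}\|_{L^1(\nu_n)\to L^\infty(\nu_m)}$ and establish, for every $m\le \ell\le n$, the recursive estimate
\[
B_n^2 \le B_\ell\,\psi_n(n-\ell),
\]
which then pairs directly with the hypothesis \eqref{eq:A_psi} to control $(A_n B_n)^2$ uniformly.

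The recursive estimate rests on two ingredients applied to the semigroup factorization $K_{m,n}=K_{m,\ell}\,K_{\ell,n}$. First, submultiplicativity of operator norms together with the $L^1\to L^2$ bound of Lemma \ref{diff-eq} yields
\[
B_n \le \|K_{m,\ell}\|_{L^2(\nu_\ell)\to L^\infty(\nu_m)}\cdot \|K_{\ell,n}\|_{L^1(\nu_n)\to L^2(\nu_\ell)}\le \|K_{m,\ell}\|_{L^2(\nu_\ell)\to L^\infty(\nu_m)}\cdot \psi_n(n-\ell)^{1/2}.
\]
Second, because each $K_s$ is a Markov kernel, the composition $K_{m,\ell}$ satisfies $\sum_y K_{m,\ell}(x,y)=1$, so a pointwise Cauchy--Schwarz on $(K_{m,\ell}f)(x)=\sum_y \bigl[K_{m,\ell}(x,y)/\sqrt{\nu_\ell(y)}\bigr]\bigl[f(y)\sqrt{\nu_\ell(y)}\bigr]$ gives
\[
\|K_{m,\ell}\|_{L^2(\nu_\ell)\to L^\infty(\nu_m)}^2 \le \sup_x \sum_y \frac{K_{m,\ell}(x,y)^2}{\nu_\ell(y)} \le \sup_{x,y}\frac{K_{m,\ell}(x,y)}{\nu_\ell(y)} = B_\ell,
\]
where the last inequality uses $\sum_y K_{m,\ell}(x,y)=1$ once more. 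Substituting back produces $B_n^2\le B_\ell\,\psi_n(n-\ell)$, as claimed.

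To close the argument, for each $n\in[m,N]$ I would select $\ell^*=\ell^*(n)\in[m,n]$ almost attaining the supremum in \eqref{eq:A_psi} and use the \emph{same} $\ell^*$ in the recursive bound on $B_n$ to obtain
\[
(A_n B_n)^2 \le \frac{A_{\ell^*}}{\psi_n(n-\ell^*)}\cdot B_{\ell^*}\,\psi_n(n-\ell^*) = A_{\ell^*} B_{\ell^*} \le M_N.
\]
Taking the supremum over $n\in[m,N]$ on the left yields $M_N^2\le M_N$, hence $M_N\le 1$. The only subtlety worth naming is verifying $M_N<\infty$ so that this last implication is not vacuous; this is immediate from the crude pointwise bound $B_n\le 1/\underline{\nu_n}<\infty$ (since $K_{m,n}(x,y)\le 1$) together with the finiteness of each $A_n$ implicit in \eqref{eq:A_psi}.
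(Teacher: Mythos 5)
Your proof is correct and follows essentially the same strategy as the paper's: factor $K_{m,n}$ through $L^2(\nu_\ell)$ at an intermediate time, use the $L^1\to L^2$ bound from Lemma \ref{diff-eq}, and then close by the self-improving inequality $M_N^2\le M_N$. The only stylistic difference is in the middle step: the paper works throughout with the adjoints $K_{m,\ell}^\star$ and obtains $\|K_{m,\ell}^\star\|^2_{L^1(\nu_m)\to L^2(\nu_\ell)}\le\|K_{m,\ell}^\star\|_{L^1(\nu_m)\to L^\infty(\nu_\ell)}$ via the interpolation $\|f\|_{L^2}^2\le\|f\|_{L^1}\|f\|_{L^\infty}$ together with the $L^1$-contraction of $K^\star$, whereas you bypass duality entirely by a kernel-level Cauchy--Schwarz combined with $\sum_y K_{m,\ell}(x,y)=1$ to get $\|K_{m,\ell}\|^2_{L^2(\nu_\ell)\to L^\infty(\nu_m)}\le B_\ell$ directly. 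These are dual formulations of the same computation. You also explicitly address the finiteness of $M_N$, which the paper leaves implicit; that is a small but worthwhile addition.
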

\begin{proof} Considering \eqref{eq:1to2-bd} at some 
$\ell \in [m,n]$ we have by Lemma \ref{diff-eq} that 
\begin{align*}
\left\Vert K_{m,n}^{\star}\right\Vert _{L^{1}(\nu_{m})\to L^{\infty}(\nu_{n})}
& \le\left\Vert K_{m,\ell}^{\star}\right\Vert _{L^{1}(\nu_{m})\to L^{2}(\nu_{\ell})}\left\Vert K_{\ell,n}^{\star}\right\Vert _{L^{2}(\nu_{\ell})\to L^{\infty}(\nu_{n})} \\
& \le 
\left\Vert K_{m,\ell}^{\star}\right\Vert _{L^{1}(\nu_{m})\to L^{2}(\nu_{\ell})}
\psi_{n}(n-\ell)^{\frac{1}{2}}.
\end{align*}
Recall that a Markov kernel $K_{m,\ell}$ is a contraction 
from $L^{\infty}(\nu_{\ell})$ to $L^\infty(\nu_m)$ 
(as $\nu_{\ell}$ is strictly positive on $V$). By duality, 
the adjoint $K_{m,\ell}^{\star}$ is thus 
a contraction from $L^{1}(\nu_{m})$ to $L^{1}(\nu_{\ell})$.
Moreover, for any $\ell$ and $f_\ell$
\[
\left\Vert f_\ell \right\Vert _{L^{2}(\nu_{\ell})}^{2}
\le\left\Vert f_\ell \right\Vert _{L^{\infty}(\nu_{\ell})}\left\Vert 
f_\ell \right\Vert _{L^{1}(\nu_{\ell})} 
\]
and taking $f_\ell = K^{\star}_{m,\ell} \, g \,$ for arbitrary 
$g \in L^1(\nu_m)$ we get that
\begin{align*}
\left\Vert K_{m,\ell}^{\star}\right\Vert^2_{L^{1}(\nu_{m})\to L^{2}(\nu_{\ell})} & \le\left\Vert K_{m,\ell}^{\star}\right\Vert _{L^{1}(\nu_{m})\to L^{\infty}(\nu_{\ell})}\left\Vert K_{m,\ell}^{\star}\right\Vert _{L^{1}(\nu_{m})\to L^{1}(\nu_{\ell})}
\le\left\Vert K_{m,\ell}^{\star}\right\Vert _{L^{1}(\nu_{m})\to L^{\infty}(\nu_{\ell})} \,.
\end{align*}
By the definition of $M_N$, we thus deduce upon choosing the optimal $\ell$ from \eqref{eq:A_psi}, that for any $n \in [m,N]$, 
\begin{align*}
A_n^2 \left\Vert K_{m,n}^{\star}\right\Vert^2_{L^{1}(\nu_{m})\to L^{\infty}(\nu_{n})} & \le A_n^2 \left\Vert K_{m,\ell}^{\star}\right\Vert _{L^{1}(\nu_{m})
\to L^{\infty}(\nu_{\ell})} \psi_{n}(n-\ell) \\
 & \le A_n^2 M_N A_{\ell}^{-1}\psi_n (n-\ell) \le M_N  \,.
\end{align*}
Finally, taking the supremum over $n$ in the \abbr{LHS}, we find that
$M_N \le 1$, as claimed.
\end{proof}

Starting with an upper bound $N_k(\sss)$ on the Nash profile functions, 
one merely applies Lemmas \ref{diff-eq} and \ref{A_n}. That is,
first solving the map \eqref{eq:DE} corresponding
to $N_{k}(\sss)$ in order to get an upper bound on $\psi_{n}(\cdot)$, 
from which by \eqref{eq:A_psi} one deduces the diagonal upper bound $1/A_{n}$. 
This is much simplified in the presence of
uniform bounds, as in Proposition \ref{nash-inc-gen}. 

\begin{proof}[Proof of Proposition \ref{nash-inc-gen}]
Considering Lemma \ref{diff-eq} for $N_k(\cdot)=N(\cdot)$ and 
$\psi_n(0)=1/c_\star$, we have the bound \eqref{eq:1to2-bd} for 
the non-increasing $\psi_n(t)=\psi(t)=1/F^{-1}(t;c_\star,N(\cdot))$.
We claim that  
\begin{equation}\label{eq:psi-ratio-bd}
\CN' := \sup_{n \ge 0} 
\Big\{ \frac{\psi(n/4)}{\psi(n/3)} \Big\}^2 < \infty 
\end{equation}
in which case $A_n=1/(\CN' \psi(n/3))$ and $\ell=[3n/4]$ satisfy
\eqref{eq:A_psi}, with Lemma \ref{A_n} completing the proof 
of the proposition. Next, setting $\CN'=C^2$ and $u=1/\psi(n/4)$, 
we get \eqref{eq:psi-ratio-bd} upon showing that 
$1/\psi(n/3) \le C u$, or alternatively that for any $u \ge c_\star$  
\[
h(u) := 3 \int_{u}^{C u} \frac{N(\sss)}{\sss}d\sss -  
\int_{c_\star}^{u} \frac{N(\sss)}{\sss}d\sss \ge 0 \,.
\]
To this end, recall \eqref{eq:regular-nash}
that for $C \ge \CN > 1$ and 
$u \ge \sss_0$ we assumed that 
$N(C u) \ge 2 N(u)$, hence
\[
h'(u) = \frac{3N(Cu)}{u} - \frac{4N(u)}{u} \ge \frac{2N(u)}{u} \ge 0 \,.
\]
Finally, with $u \mapsto N(u)$ non-decreasing, 
$h(u) \ge N(u) ( 3 \log C - \log (u/c_\star) )$ is non-negative
for any $C \ge (\sss_0/c_\star)^{1/3}$ and all $u \in [c_\star,s_0]$.
\end{proof}

\noindent
Coupling the dynamics of \eqref{def:k-mn} and \eqref{def:k-st} 
we deduce part (b) out of part (a) of Theorem \ref{nash-inc}.
\begin{proof}[Proof of Theorem \ref{nash-inc}(b)]
From the arrival times 
$\{T_n'\}$ of an auxiliary Poisson process of rate $2$, we
construct $\{Y_t\}$ obeying \eqref{def:k-st}
by independently 
censoring the jump at each time $T'_n$ with probability $1/2$
and proceeding at the non-censored $\{T_n\} \subseteq \{T'_n\}$
as in \eqref{def:csrw}. Fixing a realization $\omega = \{T'_n\}$, 
at most $N_t-N_s$ jump attempts are made in $[s,t]$ by the 
corresponding dynamics $\{X_n\}$ of \eqref{def:k-mn} having
the $1/2$-uniformly lazy transitions 
$K_n^{(\omega)} := \frac{1}{2} K_{T'_n} + \frac{1}{2} I$.
Recall that the Dirichlet form 
of any $\pi$-reversible $K$ has the symmetric form 
\begin{align}
\cE_{K,\pi}(f,g)=\frac{1}{2}\sum_{x,y\in V}(f(x)-f(y))(g(x)-g(y))\pi(x) K(x,y)
\label{dirichlet}
\end{align}
and under \eqref{dfn:unif-lazy} we have that 
$Q(x,y) \ge \alpha_l K(x,y)$ for $Q=K^2$ and all $x,y \in V$. Thus, 
in the reversible case $\cE_{K^2,\pi}(f,f) \ge \alpha_l \, \cE_{K,\pi}(f,f)$
for all $f$ and consequently, 
\begin{align}\label{eq:nash-profile-vd2}
\cN_{K,\pi}(\sss) \ge 
\alpha_l \, \cN_{K^2,\pi}(\sss)  
\,, \qquad \forall \sss\,.
\end{align}
This applies for $K=K_n^{(\omega)}$, $\alpha_l=1/2$ and 
the non-decreasing $n \mapsto \pi_{T'_n}(x)$, hence by part (a) of 
Theorem \ref{nash-inc}, the bound \eqref{eq:diag-ubd-nash} holds for
$\{X_n\}$ with $\psi(\cdot)$ as stated in part (b).
That is, 
\begin{align*}
\frac{K^{(\omega)}_{N_s,N_t}(x,y)}{\pi_t(y)} \le 
\frac{K^{(\omega)}_{N_s,N_t}(x,y)}{\pi_{T'_{N_t}}(y)}
\le \CN' \psi\Big(\frac{N_t-N_s}{3}\Big) \,.
\end{align*}
To complete the proof, note that 
$K_{s,t}(x,y)$ is the expected value of $K^{(\omega)}_{N_s,N_t}(x,y)$ 
over $\omega$, whereas $N_t-N_s \sim$ Poisson$(2(t-s))$.
\end{proof}

For the remainder of this section, we detail three additional
situations in which 
good upper bounds on the Nash profiles are available. First,
recalling that the Poincar\'e inequality 
together with uniform volume growth for $v(r)$ doubling, 
provide an upper bound on the Nash profile, in the following lemma we deduce 
from Theorem \ref{nash-inc} that 
in the context of Theorem \ref{thm-main} 
the on-diagonal 
upper bound \eqref{eq:diag-ubd-nash} holds 
for $\psi(\st)=C'/v(\sqrt{\st})$.
\begin{lem}\label{P+VG-to-Nash} 
Suppose $\bG$ and non-decreasing $t \mapsto \pi_t(x)$
satisfy uniform volume growth $v(r)$ as in \eqref{d-set}, for 
$v(r)$ doubling. Further, suppose
the $\pi_t$-reversible $K_t$ satisfy the 
uniform \abbr{PI}
\eqref{poincare}, and in case of \eqref{def:k-mn} such 
$\{K_t\}$ are also $\alpha_l$-uniformly lazy.
Then, for
 $C'(\CP,\CV, \alpha_l)$ finite, 
\begin{equation}\label{eq:diag-ubd-nash-rev}
\left\Vert K_{s,t}\right\Vert _{L^{1}(\pi_t)\to L^{\infty}(\pi_s)}
= \sup_{x,y\in V}\Big\{ 
\frac{K_{s,t}(x,y)}{\pi_t(y)}\Big\} \le \frac{C'}{v(\sqrt{t-s})} 
\quad \qquad \forall s \in [0,t] \,.
\end{equation}
\end{lem}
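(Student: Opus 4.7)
The plan is to reduce the claim to Theorem~\ref{nash-inc} by establishing, uniformly in $t$, a Nash profile bound of the form $\cN_{K_t,\pi_t}(\sss)\le C_1[v^{-1}(\sss)]^{2}$ with $C_1=C_1(\CP,\CV)$, and then evaluating the associated $\psi(\st)$ from \eqref{eq:N-psi}--\eqref{eq:DE} to verify it is of order $1/v(\sqrt{\st})$.

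For the main step I follow the standard Saloff-Coste--Grigor'yan scheme of deducing Nash inequalities from local Poincar\'e plus volume doubling. Given nonzero $f\in L^{2}(\pi_t)$, set $\sss:=\|f\|_{L^{1}(\pi_t)}^{2}/\|f\|_{L^{2}(\pi_t)}^{2}$ and choose $r$ with $v(r)\asymp\sss$ (using $v$-doubling for the freedom). I cover $V$ by balls $\{\B(x_i,r)\}$ of bounded overlap (available from \abbr{VD}) and apply \eqref{poincare} on each ball with its optimal $f_{\star,i}$. Summing over $i$ and using bounded overlap controls the Dirichlet side by $C_2\cE_{K_t,\pi_t}(f,f)$. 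The constants $f_{\star,i}$ are handled by Cauchy--Schwarz, since $|f_{\star,i}|^{2}\pi_t(\B(x_i,r))\le\|f\,\I_{\B(x_i,r)}\|_{L^{1}(\pi_t)}^{2}/\pi_t(\B(x_i,r))$, and the match $v(r)\asymp\sss\asymp\pi_t(\B(x_i,r))$ makes the total $f_{\star,i}$-contribution absorbable into $\tfrac{1}{2}\|f\|_{L^{2}(\pi_t)}^{2}$. The outcome is $\|f\|_{L^{2}(\pi_t)}^{2}\le C_1[v^{-1}(\sss)]^{2}\cE_{K_t,\pi_t}(f,f)$, that is, the desired profile bound.

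Setting $N(\sss):=C_1[v^{-1}(\sss)]^{2}$ (extended by a constant for $\sss\le 1$), the regularity \eqref{eq:regular-nash} follows from $v$-doubling: $v^{-1}(\CV\sss)\ge 2v^{-1}(\sss)$ yields $N(\CV\sss)\ge 4N(\sss)$, so any $\CN\ge\CV$ works. In the uniformly lazy \abbr{DTRW} case one needs $\cN_{K_t^{2},\pi_t}$ rather than $\cN_{K_t,\pi_t}$, but \eqref{eq:nash-profile-vd2} gives $\cN_{K_t^{2},\pi_t}(\sss)\le\alpha_l^{-1}\cN_{K_t,\pi_t}(\sss)$, so the $\alpha_l^{-1}$ is absorbed into $C_1$. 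Next, the change of variables $\sss=v(\rho)$ in \eqref{eq:N-psi} combined with $v$-doubling shows that the integrand grows geometrically, so $F(u)$ is dominated by its upper endpoint and $F(u)\asymp[v^{-1}(u)]^{2}$, hence $F^{-1}(\st)\asymp v(\sqrt{\st})$ and $\psi(\st)\asymp 1/v(\sqrt{\st})$. Theorem~\ref{nash-inc}(a) then produces \eqref{eq:diag-ubd-nash-rev} in the \abbr{DTRW} case, while part (b) gives the expectation over a $\tfrac{1}{3}$Poisson$(2(t-s))$ law of $\psi$; by Poisson concentration near the mean together with $v$-doubling (to control $1/v(\sqrt{\cdot})$ at small arguments via a crude $\underline{\pi_0}$-type bound on rare events $N_t-N_s$ small), this expectation remains of order $1/v(\sqrt{t-s})$, completing the \abbr{CSRW} case.

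The hard part is the first step: the absorption of the Poincar\'e constants $f_{\star,i}$ when the radius is tuned to $v(r)\asymp\sss$. This is exactly the matching that forces the profile to have the clean form $N(\sss)\asymp[v^{-1}(\sss)]^{2}$, and hence ultimately $\psi(\st)\asymp 1/v(\sqrt{\st})$. Once this local-to-global step is carried out cleanly, the remaining passage through Theorem~\ref{nash-inc} is routine bookkeeping with the $F$-integral and a Poisson tail estimate.
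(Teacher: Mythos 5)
Your proof is correct and follows the same route as the paper: establish a uniform Nash-profile bound $\cN_{K_t,\pi_t}(\sss)\lesssim[v^{-1}(\sss)]^2$ from the Poincar\'e inequality plus volume doubling, reduce $K_t^2$ to $K_t$ via \eqref{eq:nash-profile-vd2} in the lazy \abbr{dtrw} case, compute $\psi(\st)\asymp1/v(\sqrt{\st})$ from the $F$-integral, feed this into Theorem~\ref{nash-inc}, and handle the \abbr{csrw} case through the Poisson expectation plus a tail bound. The only cosmetic difference is that where you spell out the bounded-overlap ball cover and absorb the per-ball constants $f_{\star,i}$ by Cauchy--Schwarz directly, the paper packages the same covering argument as a pseudo-Poincar\'e inequality $\|f-f_r\|_{L^2(\pi)}^2\le cr^2\cE_{K,\pi}(f,f)$ (citing \cite[Lemma~2.4]{SC}) and then bounds the local average $f_r$ using volume growth, arriving at the same intermediate inequality $\|f\|_{L^2}^2\le Cr^2\cE+\frac{C'}{v(r)}\|f\|_{L^1}^2$.
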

\begin{proof} In case of the dynamic \eqref{def:k-mn}, with 
\eqref{eq:nash-profile-vd2} applicable for $K_t$, by 
Theorem \ref{nash-inc}(a) it suffices to show that if 
$\pi$-reversible $K$ satisfies the Poincar\'e inequality 
\eqref{poincare} and the uniform growth 
assumption \eqref{d-set} for volume doubling $v(r)$, then for 
$\beta=2$,
$v^{-1}(\sss) := \inf \{r \ge 1 : v(r) \ge \sss \}$ and 
some $C(\CP,\CV)$ finite, 
\begin{equation}\label{eq:nash-profile-vd} 
\alpha_l \, N(\sss) := C \big( v^{-1} (C \sss) \big)^{\beta} \ge 
\cN_{K,\pi}(\sss) \,, \qquad \forall \sss \,.
\end{equation} 
Indeed, $v^{-1} (\sss/\CV^{k}) \le 2^{-k} v^{-1} (\sss)$ so 
$N(\CV \sss) \ge 4 N(\sss)$, namely
\eqref{eq:regular-nash} holds with $\CN=\CV$. Moreover,
splitting the integral \eqref{eq:N-psi} 
for such non-decreasing $\sss \mapsto N(\sss)$, to 
intervals $[u/\CV^{k+1}, u/\CV^{k}]$, $k \ge 0$ 
shows that the \abbr{rhs} of \eqref{eq:N-psi} is 
dominated by the largest intervals and hence 
for some $C'=C'(C/\alpha_l,\CV)$ finite
and any $\st \ge 1/3$,
\begin{equation}\label{eq:psi-value}
\psi(\st)=\frac{1}{F^{-1}(\st;\underline{\pi_0},N(\cdot))}
\le \frac{C'}{v((3\st)^{1/\beta})} \,.
\end{equation}
Thus, multiplying $C'$ by $\CN'$ we get
\eqref{eq:diag-ubd-nash-rev} as
a consequence of \eqref{eq:diag-ubd-nash}. 
Further, recall Theorem \ref{nash-inc}(b) that for 
the dynamic \eqref{def:k-st} the preceding bound on 
$\psi(\st)$ always hold (with $\alpha_l=1/2$),
and in this context we arrive 
for $N_\lambda \sim$Poisson($\lambda$) and the non-decreasing $v(r) \ge 1$, 
at 
\begin{equation}\label{eq:diag-ubd-nash-dyn}
\left\Vert K_{s,t}\right\Vert _{L^{1}(\pi_t)\to L^{\infty}(\pi_s)}
\le \E \Big[ \frac{C'}{v((N_{2(t-s)})^{1/\beta})} \Big] 
\le \frac{C'}{v((t-s)^{1/\beta})} + C' e^{-(t-s)/\kappa} 
\end{equation}
(as $\P(N_{2\lambda} \le \lambda) \le e^{-\lambda/\kappa}$
for some $\kappa$ finite and all $\lambda \ge 0$). With 
$v(r) e^{-r^2/\kappa}$ uniformly bounded, upon increasing $C'$ 
we thus 
get \eqref{eq:diag-ubd-nash-rev} out of \eqref{eq:diag-ubd-nash-dyn}.

Turning to establish
\eqref{eq:nash-profile-vd}, 
recall that 
from \eqref{dirichlet} 
and the covering argument in the proof of 
\cite[Lemma 2.4]{SC}, it follows  
that for some $c=c(\CP,\CV)$ finite, any $f \in L^2(\pi)$ and $r > 0$ 
\[
\|f-f_r\|^2_{L^2(\pi)} \le c \, r^{2} \cE_{K,\pi}(f,f) \,,
\]
where
\[
f_{r}(x):=\frac{1}{\pi(\mathbb{B}(x,r))}\sum_{y\in\mathbb{B}(x,r)} f(y) \pi(y) \,.
\]
Further, by the uniform volume growth assumption of \eqref{d-set},
\begin{align*}
\left\Vert f_{r}\right\Vert _{L^{\infty}(\pi)} & \le\frac{1}{\inf_{x}\pi(\mathbb{B}(x,r))}\left\Vert f\right\Vert _{L^{1}(\pi)}\le\frac{\CV}{v(r)}\left\Vert f\right\Vert _{L^{1}(\pi)},\\
\left\Vert f_{r}\right\Vert _{L^{1}(\pi)} & \le \CV^{2}\left\Vert f\right\Vert _{L^{1}(\pi)} \,.
\end{align*}
Consequently  
\begin{equation}\label{tianyi-1}
\|f\|_{L^{2}(\pi)}^{2}\le 
2 \|f-f_r\|_{L^2(\pi)}^2 + 2 \|f_r\|_{L^2(\pi)}^2 
\le 2 c \, r^{2}\mathcal{E}_{K,\pi}(f,f) + \frac{2 \CV^{3}}{v(r)}
\| f\|_{L^{1}(\pi)}^{2} \,.
\end{equation}
Setting $C:=2 c + 2 \CV^3$ and 
$r = v^{-1}(C \sss)$, it follows from \eqref{tianyi-1} that 
if $\| f \|^2_{L^{1}(\pi)} \le \sss$ and $\|f\|_{L^2(\pi)}^2 = 1$, then 
\begin{equation*}
\frac{1}{\cE_{K,\pi}(f,f
)} \le C r^2 = \alpha_l \, N(\sss) \,.
\end{equation*}
To complete the proof, recall that $\cN_{K,\pi}(\sss)$ is 
the maximum of $1/\cE_{K,\pi}(f,f)$ over such $f$. 
\end{proof}

One can also obtain upper bounds on the Nash profiles from lower bounds on isoperimetric profiles of the Markov operators. Indeed, there is a tight 
connection between the Nash and isoperimetric
profiles of a Markov chain $Q$ having invariant measure $\pi$. Specifically, recall the $L^{2}$-isoperimetric (or spectral) profile 
of such a chain $(Q,\pi)$ on infinite state space $V$, defined as 
the non-increasing 
\begin{equation}\label{dfn:L2-isop}
\Lambda_{Q,\pi}(u)=\inf\{\lambda_{Q}(\Omega):\Omega\subseteq V,\;\pi(\Omega)\le u\}
\end{equation}
where 
\begin{equation}
\lambda_{Q}(\Omega)=\inf\big\{\mathcal{E}_{Q,\pi}(f,f):\ \mbox{support}(f)
\subseteq \Omega,\; \|f\|_{L^{2}(\pi)}=1\big\}\,.
\label{def-eig}
\end{equation}
In words, $\lambda_{Q}(\Omega)$ is the smallest eigenvalue of the operator
$I-Q$ with Dirichlet boundary condition in $\Omega$. Also recall the 
$L^{1}$-isoperimetric (or conductance) profile 
\[
\Phi_{Q,\pi}(u)=\inf\Big\{ 
\frac{1}{\pi(\Omega)} \sum_{x\in\Omega} \pi(x)Q(x,\Omega^c)\,:
\Omega \subseteq V, \; \pi(\Omega)\le u\Big\}\,.
\]
The $L^2$ and $L^1$ profiles are related via Cheeger's inequality (see \cite{LS}), 
\begin{equation}
\frac{1}{2}\Phi_{Q,\pi}^{2}(u)\le\Lambda_{Q,\pi}(u)\le\Phi_{Q,\pi}(u)\,.
\label{Cheeger}
\end{equation}
As shown next, the Nash profile $\mathcal{N}_{Q,\pi}(\cdot)$ contains 
the same information as 
the $L^{2}$-isoperimetric profile (see \cite[Lemma 2.1]{GMT}
for such a result in case of finite Markov chains; 
the proof for $V$ infinite is provided here for the reader's convenience).
\begin{lem}\label{N-L} For Markov operator $Q$, its $\sigma$-finite
invariant measure $\pi$, and any $u>0$,
\[
\frac{1}{\Lambda_{Q,\pi}(u)}\le\mathcal{N}_{Q,\pi}(u)\le\frac{2}{\Lambda_{Q,\pi}(4u)}\,.
\]
\end{lem}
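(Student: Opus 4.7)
The plan is to prove both inequalities directly from the variational characterizations of $\cN_{Q,\pi}$, $\Lambda_{Q,\pi}$, and $\lambda_Q$. The lower bound $1/\Lambda_{Q,\pi}(u) \le \cN_{Q,\pi}(u)$ is a direct comparison of admissibility classes: if $f$ is supported in some $\Omega$ with $\pi(\Omega) \le u$, then by Cauchy-Schwarz $\|f\|_{L^1(\pi)}^2 \le \pi(\Omega)\|f\|_{L^2(\pi)}^2 \le u \|f\|_{L^2(\pi)}^2$, so $f$ is admissible in \eqref{nash-profile}, giving $\cN_{Q,\pi}(u) \ge \|f\|_{L^2(\pi)}^2/\cE_{Q,\pi}(f,f)$. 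Taking the supremum over such $f$ yields $\cN_{Q,\pi}(u) \ge 1/\lambda_Q(\Omega)$, and then the infimum over $\Omega$ completes this direction.

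For the upper bound $\cN_{Q,\pi}(u) \le 2/\Lambda_{Q,\pi}(4u)$, the strategy is to truncate. Given an admissible $f$, I would replace it by $|f|$ (which only decreases the Dirichlet form since $Q(x,y) \ge 0$) and normalize so that $\|f\|_{L^2(\pi)}^2 = 1$ and $\|f\|_{L^1(\pi)} \le \sqrt u$. Set $F(a) := \pi(\{f > a\})$ and $g_a := (f - a)_+$. Two properties of $g_a$ will be used: (i) the normal contraction $t \mapsto (t-a)_+$ yields $\cE_{Q,\pi}(g_a,g_a) \le \cE_{Q,\pi}(f,f)$; (ii) $\supp(g_a) \subseteq \{f > a\}$, which has $\pi$-measure $F(a)$. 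Then choose $a := \inf\{s \ge 0 : F(s) \le 4u\}$; right-continuity of $F$ (from monotone convergence along $\{f > s_n\} \uparrow \{f > a\}$ as $s_n \downarrow a$) gives $F(a) \le 4u$. The layer-cake representation produces
\begin{equation*}
\|f\|_{L^2(\pi)}^2 - \|g_a\|_{L^2(\pi)}^2 = \int_0^\infty 2(t \wedge a)\, F(t)\, dt \le 2a\, \|f\|_{L^1(\pi)} \le 2a\sqrt u,
\end{equation*}
while $F(s) > 4u$ on $[0,a)$ combined with $\int_0^\infty F = \|f\|_{L^1(\pi)} \le \sqrt u$ forces $a \le 1/(4\sqrt u)$. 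Hence $\|g_a\|_{L^2(\pi)}^2 \ge 1/2$, and since $\pi(\supp(g_a)) \le 4u$,
\begin{equation*}
\cE_{Q,\pi}(f,f) \ge \cE_{Q,\pi}(g_a,g_a) \ge \lambda_Q(\supp(g_a))\, \|g_a\|_{L^2(\pi)}^2 \ge \Lambda_{Q,\pi}(4u)/2,
\end{equation*}
so $\|f\|_{L^2(\pi)}^2/\cE_{Q,\pi}(f,f) \le 2/\Lambda_{Q,\pi}(4u)$; taking the supremum over admissible $f$ closes the proof.

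The main delicate point is the \emph{simultaneous} control of $\pi(\supp(g_a))$ and of the $L^2$-mass lost by truncation: the cutoff level $a$ must be large enough that $F(a) \le 4u$, yet small enough that $\|f - g_a\|_{L^2(\pi)}^2 \le 1/2$. The constant $4$ inside $\Lambda_{Q,\pi}(4u)$ is precisely the slack needed for Markov's inequality $F(a) \le \|f\|_{L^1(\pi)}/a$ to accommodate both demands; a smaller constant would leave this choice of $a$ infeasible. The extension from finite to $\sigma$-finite state space (as opposed to the finite case treated in \cite{GMT}) introduces no additional obstacle, since admissibility of $f$ in \eqref{nash-profile} already forces $f \in L^1(\pi) \cap L^2(\pi)$ and renders the layer-cake integrals finite.
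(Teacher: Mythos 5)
Your argument is correct and takes essentially the same route as the paper: both sides truncate $f$ to $(f-a)_+$ at a level chosen so that the support has $\pi$-measure at most $4u$ while the $L^2$-mass loss is at most $\frac{1}{2}\|f\|_{L^2(\pi)}^2$, then apply the spectral profile to the truncation. The only superficial difference is that you define the cutoff adaptively from the level-set distribution $F$ (requiring a right-continuity check), whereas the paper sets $\st=\|f\|_{L^1(\pi)}/(4u)$ explicitly and obtains $\pi(\{f>\st\})\le 4u$ directly from Markov's inequality.
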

\begin{proof} By the Cauchy-Schwarz inequality 
$\|f\|_{L^{1}(\pi)}^{2}\le\pi(\Omega)\|f\|_{L^{2}(\pi)}^{2}$
for any $f$ supported within $\Omega$, yielding that 
$\Lambda_{Q,\pi}(u) \ge 1/\cN_{Q,\pi}(u)$ via  
the definitions \eqref{nash-profile} and \eqref{dfn:L2-isop}.
We proceed to show that 
$\Lambda_{Q,\pi}(4u) \le 2/\cN_{Q,\pi}(u)$. Namely, 
that for any $f \in L^2(\pi)$  
\begin{equation}\label{eq:nash-isop}
\|f\|_{L^1(\pi)}^2 = u \|f\|_{L^2(\pi)}^2 
\quad \Longrightarrow 
\quad 
\frac{1}{2} \|f\|^2_{L^2(\pi)} \, \Lambda_{Q,\pi}(4u)  
\le \cE_{Q,\pi}(f,f) \,.
\end{equation}
Indeed, recall from \eqref{dfn:L2-isop} that for $f \in L^2(\pi)$,
\[ 
\|f\|_{L^{2}(\pi)}^{2} \,
\Lambda_{Q,\pi}\big( \pi(\mbox{support}(f))\big) 
\le \cE_{Q,\pi}(f,f) \,.
\]
Next, for $\st \ge 0$ set $f_{\st}:=(f-\st)_+$ 
supported on $\Omega_\st := \{ f > \st \}$. Obviously 
$f_\st \in L^2(\pi)$, with 
$f^{2} - 2 \st |f| \le f_{\st}^{2}$ 
and
$\cE_{Q,\pi}(f_{\st},f_{\st})\le
\cE_{Q,\pi}(f,f)$. Hence, 
\[
\Big[
\|f\|_{L^{2}(\pi)}^{2} - 2\st\|f\|_{L^1(\pi)} \Big] 
\Lambda_{Q,\pi}(\pi(\Omega_\st)) 
\le  
\|f_\st\|^2_{L^2(\pi)} 
\Lambda_{Q,\pi}(\pi(\Omega_\st)) 
\le \cE_{Q,\pi}(f_\st,f_\st) \le \cE_{Q,\pi}(f,f) \;. 
\]
Since $\pi(\Omega_\st)\le \st^{-1}\|f\|_{L^{1}(\pi)}$, 
if $4\st \|f\|_{L^1(\pi)} =  \|f\|^2_{L^2(\pi)}$ 
for $\st=\|f\|_{L^1(\pi)}/(4 u)$ finite, then 
\[
\frac{1}{2} 
\|f\|_{L^{2}(\pi)}^{2} \,
\Lambda_{Q,\pi}(4u) 
\le 
\frac{1}{2} 
\|f\|_{L^{2}(\pi)}^{2} \,
\Lambda_{Q,\pi}(\pi(\Omega_\st)) 
\le \cE_{Q,\pi}(f,f)\,,
\]
as claimed in \eqref{eq:nash-isop}.
\end{proof}

By Lemma \ref{N-L}, any lower bound on the $L^{2}$ or $L^{1}$-isoperimetric
profile can be turned into an upper bound on the Nash profile. 
\begin{ex}\label{inc-diag-ubd}
Consider Example \ref{inc-ex} with $\nu_n=\pi_n$ invariant 
for $K_n$ such that $n\mapsto\pi_{n}(x)$ are non-decreasing 
(and $Q_{n}=K_{n}^{\star} K_{n}$). Suppose also  
that for some $d>0$ and positive $\kappa_n$ 
\begin{equation}
\mathcal{N}_{Q_{n},\pi_{n}}(\sss)\le 4 \alpha^{-1} \kappa_{n}^{-2} (4\sss)^{2/d}\,, 
\qquad \forall \sss,n,
\label{eq:poly}
\end{equation}
where $\gamma_n := \sum_{m=1}^n \kappa_m^2$ are such that for some $c_0 \ge 2$, 
\begin{equation}
\gamma_n \ge c_0 \qquad \Longrightarrow \qquad \exists \ell(n) \qquad 
\frac{1}{3} \le \frac{1+\gamma_{\ell(n)}}{1+\gamma_n} \le \frac{2}{3}  \,.
\label{eq:ga-c}
\end{equation}
It is easy to check that for some $c=c(\alpha, d, \underline{\pi_0})$ finite,
\[
\psi_{n}(j)=c \Big(1 + \gamma_n - \gamma_{n-j} \Big)^{-d/2}\,,
\]
satisfies \eqref{eq:DE} and consequently the bound \eqref{eq:1to2-bd}.
The condition \eqref{eq:ga-c} allows for taking
$A_{n}= c_1^{-1} (1+\gamma_n)^{d/2}$ with $c_1= c (1+c_0)^{d}$
in Lemma \ref{A_n}, thereby concluding that 
\begin{equation}\label{eq:dhmp}
\sup_{x,y \in V} \, \Big\{ \frac{K_{0,n}(x,y)}{\pi_{n}(y)} \Big\} 
\le c_1 \Big(1+ \sum_{m=1}^{n} \kappa_{m}^2 \Big)^{-d/2} \,.
\end{equation}
In particular, the bound \eqref{eq:dhmp} recovers \cite[Theorem 1.2]{DHMP},
proved before for the dynamic \eqref{def:k-mn} with
$(K_t,\pi_t)$ of \eqref{n-kernel}-\eqref{dfn:v-cond}
via evolving sets techniques. More precisely,
\cite{DHMP} assume that the uniform lazy property
\eqref{dfn:unif-lazy} holds and 
$L^1$-isoperimetric profiles such that for some $d>1$ and
positive $\kappa_n$
\begin{equation}\label{eq:isop}
\Phi_{K_n,\pi_n}(\sss) \ge \kappa_n \sss^{-1/d} \,.
\end{equation}
As noted by \cite{DHMP},
if $\sup_{x,n} \{\pi_n(x)\} \le C^d$ then
$\kappa_n \le (\underline{\pi_n})^{1/d} \le C$, so \eqref{eq:ga-c} holds 
(for $c_0 = 2+ 3 C^2$). Recall the left inequality of 
\eqref{eq:nash-profile-vd} that for $Q_n=K_n^2$ in the reversible case
\begin{align*}
\cN_{Q_{n},\pi_{n}}(\sss)\le\alpha_l^{-1} \, \cN_{K_{n},\pi_{n}}(\sss)
\,,\qquad \forall \sss, n \,.
\end{align*}
Finally, by Lemma \ref{N-L} and
Cheeger's inequality \eqref{Cheeger}, 
the isoperimetric bound \eqref{eq:isop} implies 
\[
\cN_{K_{n},\pi_{n}}(\sss) \le 2 \Lambda_{K_n,\pi_n}(4\sss)^{-1} 
\le 4 \Phi_{K_n,\pi_n}(4\sss)^{-2} \le 
4 \kappa_{n}^{-2} (4\sss)^{2/d} \,, \qquad \forall \sss,n \,.
\] 
So, the assumptions of \cite{DHMP} imply both
\eqref{eq:poly} and \eqref{eq:ga-c}, thereby yielding
\eqref{eq:dhmp}.
\end{ex}

Theorem \ref{nash-inc} and Proposition \ref{nash-inc-gen} can be applied to general Markov operators $K_t$, in particular, having non-local jumping kernels. The following example illustrates this point.
\begin{ex}\label{nonlocal}
Let $d(x,y)$ denote the graph distance on a locally finite, connected, infinite, 
non-oriented graph $\mathbb{G}=(V,E)$. Suppose that for each $n\in \N$ the 
non-local Markov kernel $K_n$ is reversible with respect to the 
measure $\pi_n$, such that:
\newline
$\bullet$ For every $x\in V$ the sequence $n\mapsto \pi_n(x)$ is non-decreasing,
and $\pi_n$ satisfies uniform volume growth with $v(r)$ doubling as in \eqref{d-set}.
\newline 
$\bullet$ There exist $A<\infty$ and $\beta \in (0,2)$ such that for any 
$x \ne y \in V$,
\begin{equation}\label{eq:Chen-Kumagai}
\frac{K_n(x,y)}{\pi_n(y)} \ge \frac{d(x,y)^{-\beta}}{A\, v(d(x,y))} \,.
\end{equation}
It is easy to see that upon changing $A$ to $A^2 \, v(2) 2^{\beta} \CV$,
the bound \eqref{eq:Chen-Kumagai} applies also for $K_n^2(x,y)$. Hence,
by \cite[Theorem 3.1]{CK} there exist finite 
$c_i=c_i(\CV,A,\beta)$, $i=1,2$, such that for 
$\theta(r):= r \left( v^{-1}(c_1/r)\right)^{-\beta}$
and the inverse $v^{-1}$ of the function $r\mapsto v(r)$,
the following Nash inequality holds
for every $f\in \mbox{Dom} (\cE_{K^2_n,\pi_n})$ with $\|f\|_{L^{1}(\pi_n)}=1$:
$$
\theta\left(\|f\|^{2}_{L^2(\pi_n)}\right)\le c_2\mathcal{E}_{K^2_n,\pi_n}(f,f)\,.
$$ 
Equivalently, the Nash profile of $(K_n^2,\pi_n)$ then satisfies 
$$
N(\st) :=
c_2\left( v^{-1}(c_1\st)\right)^{\beta} \ge 
\mathcal{N}_{K^2_n,\pi_n}(\st) \,.
$$
As in the derivation of \eqref{eq:psi-value}, 
it then follows from Theorem \ref{nash-inc} 
(now with $\CN = \CV^{\lceil 1/\beta \rceil}$),
that for 
the dynamics \eqref{def:k-mn},
some finite $c_3=c_3(\CV,A,\beta)$ and all integers $s<t$,
$$\sup_{x,y\in V}\Big\{ \frac{K_{s,t}(x,y)}{\pi_t(y)}\Big\} \le \frac{c_3}{v((t-s)^{1/{\beta}})}
\,.
$$
\end{ex}
\end{section}

\begin{section}{Gaussian upper bounds}\label{sect:gauss} 

We adapt the technique of \cite[Section 2]{HS} for deriving
off-diagonal Gaussian upper bounds via complex interpolation 
techniques. Specifically, in this section we work with 
$L^{p}$ spaces of $\bC$-valued functions, with $\cC_{0}(V)$ 
denoting the dense linear subspace of 
finitely supported $\bC$-valued functions. Considering  
$\rho: V \to \R$ such that the non-negative linear operators 
on $\cC_0(V)$ 
\begin{equation}
K_{s,t}^{\theta}f(x):=w_{-\theta}K_{s,t}(w_{\theta}f)(x) \,,\quad
w_{\theta}(x):=e^{\theta \rho(x)}\,,\quad 0 \le s \le t \le T\,, 
\quad \theta \in \R\,,
 \label{eq:Ks}
\end{equation}
have bounded $L^2(\nu_t) \to L^2(\nu_s)$ norms, we study the 
unique continuous extension of 
$K_{s,t}^\theta$
for both continuous and discrete time (where $s,t \in \Z_+$).
Our main example is $\rho(x)=d(x,x_0)$ for the
graph distance $d(x,y)=d_{\bG}(x,y)$ in a locally finite, 
connected graph $\bG$ and a fixed vertex $x_0 \in V$.

For completeness we first prove the following proposition, which
summarizes the interpolation method of \cite{HS}.
\begin{ppn}\label{prop:hs} \emph{(see \cite[Lemma 2.2]{HS})} 
Suppose $\{K^\theta_{s,t}\}$ are as in \eqref{eq:Ks}, with 
non-negative linear operators $K_{s,t} = K_{s,\xi} K_{\xi,t}$,
$s \le \xi \le t$ (with $K_{\xi,\xi}=I$), such that 
\begin{equation}
\left\Vert K_{s,t}\right\Vert _{L^{\infty}(\nu_{t})\to L^{\infty}(\nu_{s})}\le 
1\,,\qquad\forall 0 \le s \le t \le T\,,\label{eq:infty}
\end{equation}
for strictly positive 
$\sigma$-finite measures $\{\nu_s\}$, with $\nu_0 \in \cM_+(V)$. 
Suppose that for $t \ge s \in [0,T]$:\\
(a) For a non-decreasing $t \mapsto a_t$ satisfying \eqref{cond:pert},
the \emph{Gaffney bound}
\begin{align}
\left\Vert K_{s,t}^{\theta}\right\Vert _{L^{2}\left(\nu_{t}\right)\to L^{2}(\nu_{s})} &\le \exp \big( a_t - a_s + \chi(\theta) (t-s) \big) \,,
\quad\forall \, \theta \in \R \,, 
\label{dfn:gaffney-bd} \\
\hbox{with} 
\quad \qquad c_1^{-1} \theta^2 &\le \chi(\theta) \le c_1 \theta^2 \,,  
\quad \qquad \qquad \qquad \forall |\theta| \le \delta_\star \,,
\label{eq:chi-bd}
\end{align}
holds on $\cC_0(V)$, for $|\theta| \mapsto \chi(\theta)$ non-decreasing, 
some $\delta_\star > 0$ and $c_{1}<\infty$. \\
(b) \emph{The $2 \to\infty$ bound}
\begin{equation}
\left\Vert K_{s,t}\right\Vert_{L^{2}(\nu_{t})\to L^{\infty}(\nu_{s})}\le\varphi(t-s) \,,\label{eq:on-diagonal}
\end{equation}
holds with $\tau \mapsto \varphi(\tau) \tau^{\bbeta}$ non-decreasing on $[0,T]$ for some $\bbeta>0$.\\
Then, for some finite $C_{1}(c_{1},\bbeta,\delta_\star)$ and  
$C_{2}=C_{2}(A,\bbeta,\underline{\nu_0},\varphi(1))$,
\begin{equation}\label{eq:bd-hs}
\left\Vert K_{0,T}^{\theta}\right
\Vert _{L^{2}(\nu_{T})\to L^{\infty}(\nu_{0})}\le
C_{2} \, \varphi(T) \exp(C_{1} \chi(\theta) T) \,, \qquad 
 \forall \theta \in \R\,, \quad  T \ge 1 \,.
\end{equation}
\end{ppn}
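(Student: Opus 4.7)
The proof follows the complex interpolation method adapted from \cite[Lemma 2.2]{HS}. The analytic core is the three-lines (Phragm\'en-Lindel\"of) theorem applied to the entire operator family $z\mapsto K_{0,T}^{z\theta}$, which acts as a finite sum on any $f\in\cC_{0}(V)$ and is of exponential type in $|z|$. I will interpolate on the vertical strip $\{0\le\mathrm{Re}(z)\le b\}$ between the unperturbed $L^{2}\to L^{\infty}$ bound \eqref{eq:on-diagonal} on the left boundary and the extended Gaffney $L^{2}\to L^{2}$ bound \eqref{dfn:gaffney-bd} on the right boundary; a semigroup factorization of $K_{0,T}^{\theta}$ then bootstraps the resulting estimate into the form claimed in \eqref{eq:bd-hs}.

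The key pointwise estimate, based on non-negativity of the kernel of $K_{s,t}$ together with $|e^{iy\theta\rho}|\equiv 1$, is
\[
\bigl|K_{s,t}^{\theta'+iy\theta}f(x)\bigr|\ \le\ K_{s,t}^{\theta'}(|f|)(x), \qquad \theta'\in\R,\ y\in\R.
\]
Specializing $\theta'=0$ carries \eqref{eq:on-diagonal} to the line $\mathrm{Re}(z)=0$ uniformly in $\mathrm{Im}(z)$, while $\theta'=b\theta$ (after rescaling the parameter in a strip of width $1$) carries \eqref{dfn:gaffney-bd} to the other line, with $\chi(b\theta)$ in place of $\chi(\theta)$. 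Testing against $g=\mathbf{1}_{\{x\}}$ and applying three-lines to the scalar entire function $F(z):=\langle K_{0,T}^{z\theta}f,g\rangle_{\nu_{0}}$ on this strip then yields, for any $b\ge 1$,
\[
\|K_{0,T}^{\theta}\|_{L^{2}(\nu_{T})\to L^{\infty}(\nu_{0})}\ \le\ \varphi(T)^{1-1/b}\,\underline{\nu_{0}}^{-1/(2b)}\,\exp\!\Bigl(\tfrac{1}{b}\bigl[(a_{T}-a_{0})+\chi(b\theta)T\bigr]\Bigr).
\]
For $|\theta|\le\delta_{\star}$ one chooses $b\asymp\delta_{\star}/|\theta|$ so that \eqref{eq:chi-bd} bounds $\chi(b\theta)/b\le c_{1}b\theta^{2}$; for $|\theta|\ge\delta_{\star}$ the claim is essentially trivial from the Gaffney estimate plus $\nu_{0}\in\cM_+(V)$, since the target exponent $\chi(\theta)T$ then dominates.

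To remove the $\varphi(T)^{1-1/b}$ loss I next factorize $K_{0,T}^{\theta}=K_{0,T-\tau}^{\theta}K_{T-\tau,T}^{\theta}$ with $\tau$ a suitably chosen fraction of $T$, bounding the first factor in $L^{2}\to L^{\infty}$ via the interpolation estimate above (applied at time $T-\tau$ with $b$ large) and the second in $L^{2}\to L^{2}$ directly by \eqref{dfn:gaffney-bd}; bootstrapping via the monotonicity of $\tau\mapsto\tau^{\bbeta}\varphi(\tau)$, combined with \eqref{cond:pert} to control the accumulation of $a_{t}$ increments, collapses the multiplicative $\varphi$-loss into a constant. The main obstacle is precisely this bootstrap: arranging the splitting so that the final estimate has $C_{2}\varphi(T)$ rather than $\varphi(T)^{1-\varepsilon}$, while the exponent retains the form $C_{1}\chi(\theta)T$ with $C_{1}$ depending only on $c_{1},\bbeta,\delta_{\star}$ and $C_{2}$ only on $A,\bbeta,\underline{\nu_{0}},\varphi(1)$. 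A secondary point is the growth hypothesis required by Phragm\'en-Lindel\"of, which is automatic since $F(z)$ is of exponential type in $|z|$ on the strip for finitely supported $f,g$, well within the admissibility class.
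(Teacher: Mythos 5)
Your opening moves are sound: the Phragm\'en--Lindel\"of (three-lines/Stein interpolation) bound is correct, and your preliminary estimate
\[
\|K_{0,T}^{\theta}\|_{L^{2}(\nu_T)\to L^{\infty}(\nu_0)}
\ \le\ \varphi(T)^{1-1/b}\,\underline{\nu_0}^{-1/(2b)}
\exp\!\Bigl(\tfrac{1}{b}\bigl[(a_T-a_0)+\chi(b\theta)T\bigr]\Bigr)
\]
is the right starting point. You also correctly identify the obstacle: removing the $\varphi(T)^{1-1/b}$ power loss. But the two-factor bootstrap you sketch does not close this gap, and this is precisely the hard part of the Hebisch--Saloff-Coste argument.

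The difficulty is a genuine tension between the two roles of $b$. To recover the full $\varphi(T)$ (not a smaller power) you must let the exponent $1-1/b\to 1$, i.e.\ $b\to\infty$; but the Gaffney term contributes $\chi(b\theta)T/b$, and for $|b\theta|\le\delta_\star$ the bound \eqref{eq:chi-bd} gives $\chi(b\theta)/b\le c_1 b\theta^2$, which is \emph{linear} in $b$ and hence diverges as $b\to\infty$. Taking $b\asymp\delta_\star/|\theta|$, as you propose, gives $\chi(b\theta)T/b\asymp|\theta|T$ rather than $\theta^2 T$, so the exponent cannot take the form $C_1\chi(\theta)T$ for small $|\theta|$. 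Factoring $K^{\theta}_{0,T}=K^{\theta}_{0,T-\tau}K^{\theta}_{T-\tau,T}$ with the first factor bounded ``with $b$ large'' only relocates the problem: the first factor still carries the blow-up in $\chi(b\theta)(T-\tau)/b$, and if instead you keep $b$ bounded you still lose a fixed power of $\varphi$. A single split (or even any bounded number of splits with a fixed $b$) cannot resolve this.

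What actually closes the argument in \cite[Lemma 2.2]{HS}, and in the paper's proof, is a \emph{multi-scale} factorization: $[0,T]$ is partitioned into $m\asymp\sqrt{T}$ blocks of lengths $\ell_j=\lfloor \eta_j T\rfloor$ with $\eta_j\propto j^{-2}$, and a chain of Lebesgue exponents $q_0=1\le q_1\le\cdots\le q_m$ growing slowly (e.g.\ $q_j=1+(\log j)_+^2$, stopped once $q_m\asymp\log T$) is threaded through the product
\[
\|K^{\theta}_{0,T}\|_{2\to\infty}\ \lesssim\
\prod_{j=1}^{m}\|K^{\theta}_{t_j,t_{j-1}}\|_{L^{2q_{j-1}}\to L^{2q_j}}\,.
\]
Each block contributes $\varphi(\ell_j)^{1/q_{j-1}-1/q_j}$ and $\exp(q_j^{-1}\chi(\theta q_j)\ell_j)$. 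The blocks and exponents are tuned so that simultaneously (i) $\sum_j q_j\ell_j=O(T)$, which keeps the total Gaffney exponent at $O(\chi(\theta)T)$; (ii) $\sum_j(1/q_{j-1}-1/q_j)\log(T/\ell_j)=O(1)$, which is what converts $\prod_j\varphi(\ell_j)^{1/q_{j-1}-1/q_j}$ into $O(\varphi(T))$; and (iii) $q_m\asymp\log T$, which handles the residual $\underline{\nu_0}^{-1/(2q_m)}$ and $\varphi(1)^{-1/q_m}$ factors. Condition \eqref{cond:pert} enters to control the $a$-increments across blocks via the rescaled measures $\nu_{s;t}=e^{-2(a_t-a_s)}\nu_s$. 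None of this structure appears in your proposal, so the proof as written does not establish \eqref{eq:bd-hs}. You should replace the two-factor bootstrap with the full telescoping product over $m\asymp\sqrt{T}$ blocks, together with the Riesz--Thorin $L^{2p}\to L^\infty$ step and the second Stein interpolation to obtain the $L^{2p}\to L^{2q}$ bounds that feed into it.
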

\begin{remark}\label{rem:contract} 
Clearly, \eqref{eq:infty} holds for Markov transition probabilities
$\{K_{s,t}\}$ and strictly positive $\{\nu_s\}$. 
\end{remark}
\begin{proof} Since $(\underline{\nu})^{1/q}
\left\Vert f\right\Vert _{L^{\infty}(\nu)}\le 
\left\Vert f\right\Vert _{L^{q}(\nu)}$ for any $f$ and $\nu \in \cM_+(V)$, 
necessarily  
\begin{align}\label{eq:pq-to-p-inf}
\left\Vert K_{0,t}^{\theta}\right\Vert_{L^{2p}(\nu_{t})\to L^{\infty}(\nu_0)} 
\le (\underline{\nu_0})^{-1/(2q)}
\left\Vert K_{0,t}^{\theta}\right\Vert_{L^{2p}(\nu_{t})\to 
L^{2q}(\nu_{0})}\,, \qquad \forall p, q, t > 0 \,.
\end{align}
Considering \eqref{dfn:gaffney-bd} and
\eqref{eq:pq-to-p-inf} at $s=0$, $t=T$, $p=q=1$, 
our assumption \eqref{eq:chi-bd} and having
$\varphi(T) \ge \varphi(1) T^{-\bbeta}$ for $T \ge 1$,
yield \eqref{eq:bd-hs} for 
$C_1=1 + c_1 \kappa \bbeta/\delta_\star^2$, 
$C_2=(\underline{\nu_0})^{-1/2} \varphi(1)^{-1}$ and
$|\theta| \ge \delta_\star/(1+\log T)$, where
\[
\kappa := \sup_{t \ge 1} \Big\{ \frac{\log t}{t} (1 + \log t)^2 \Big\} 
< \infty \,.
\]
We proceed to derive \eqref{eq:bd-hs} when 
$|\theta| < \delta_\star/(1+\log T)$ by closely 
following \cite[Subsection 4.2.2]{SC2}. To this end, 
\eqref{eq:infty} and \eqref{eq:on-diagonal} are invariant 
under the re-scaling $\nu_{s;t} := e^{-2(a_t-a_s)} \nu_s$ 
and yield by Riesz-Thorin interpolation theorem that 
\begin{equation}
\left\Vert K_{s,t}\right\Vert _{L^{2p}(\nu_{t})\to L^{\infty}(\nu_{s;t})}\le\varphi(t-s)^{1/p}\,, \qquad \forall p \ge 1 \,.
\label{eq:p-infty}
\end{equation}
To apply Stein's interpolation theorem, consider the 
$\bC$-valued weights $w_{\theta z}(x)=w_\theta (x)^z$ indexed on
the strip $\bS := \{ z = u+ib : u = \Re(z) \in [0,1]\}$.
For fixed $\theta \in \R$ and $m \le \ell$ 
the associated map $z \mapsto K_{m,\ell}^{\theta z}=w_{-\theta z}K_{m,\ell}\, w_{\theta z}$ 
forms an $\bS$-analytic collection  of linear operators on
$\cC_{0}(V)$ such that 
$K_{m,\ell}^{\theta (u+ib)}= w_{-i\theta b} K_{m,\ell}^{\theta u} w_{i\theta b} 
$. With $|w_{\pm i \theta b}(x)|=1$ 
we thus have for $u=0$ the $L^\infty$-contraction 
\begin{equation}
\left\Vert K_{s,t}^{\theta ib}\right\Vert _{L^{\infty}\left(\nu_{t}\right)\to L^{\infty}(\nu_{s;t})}\le\left\Vert K_{s,t}\right\Vert _{L^{\infty}\left(\nu_{t}\right)\to L^{\infty}(\nu_{s;t})}\le 1.\label{eq:infty-b}
\end{equation}
Moving to $\nu_{s;t}$ eliminates the term $a_t-a_s$ 
in \eqref{dfn:gaffney-bd}, so we get 
for $u=1$ the $L^2$-norm bound 
\begin{equation}
\left\Vert K_{s,t}^{\theta (1+ib)}\right\Vert _{L^{2}\left(\nu_{t}\right)\to L^{2}(\nu_{s;t})}\le\left\Vert K_{s,t}^{\theta}\right\Vert _{L^{2}\left(\nu_{t}\right)\to L^{2}(\nu_{s;t})}\le\exp\left(\chi(\theta) (t-s) \right).\label{eq:2-b}
\end{equation}
By Stein's interpolation 
(see \cite{SW}), from 
\eqref{eq:infty-b} and \eqref{eq:2-b} we have for $\lambda = 1/p \in [0,1]$,
\begin{equation*}
\left\Vert K_{s,t}^{\theta \lambda}\right\Vert _{L^{2p}\left(\nu_{t}\right)\to L^{2p}(\nu_{s;t})}\le\exp\left(\lambda \chi(\theta) (t-s) \right)\, 
\end{equation*}
and upon replacing $\theta$ by $\theta/\lambda$, deduce that for all $\theta \in \R$,
\begin{equation}\label{eq:stein1}
\left\Vert K_{s,t}^{\theta(1+ib)}\right\Vert_{L^{2p}\left(\nu_{t}\right)\to L^{2p}(\nu_{s;t})}\le\left\Vert K_{s,t}^{\theta}\right\Vert _{L^{2p}\left(\nu_{t}\right)\to L^{2p}(\nu_{s;t})}\le\exp\left(p^{-1} \chi(\theta p)
(t-s) \right) \,.
\end{equation}
Next, employing \eqref{eq:p-infty} gives that  
\begin{equation}\label{eq:stein0}
\left\Vert K_{s,t}^{\theta ib}\right\Vert _{L^{2p}(\nu_{t})\to L^{\infty}(\nu_{s;t})}\le\left\Vert K_{s,t}\right\Vert _{L^{2p}(\nu_{t})\to L^{\infty}(\nu_{s;t})}\le\varphi(t-s)^{1/p}\,.
\end{equation}
With $\nu_t=\nu_{t;t}$, from 
\eqref{eq:stein1} and \eqref{eq:stein0} we conclude 
by yet another application of Stein's interpolation theorem,
that for $\lambda=p/q \in [0,1]$ and any $q \ge p \ge 1$,
\[
\left\Vert K_{s,t}^{\theta \lambda}\right\Vert _{L^{2p}\left(\nu_{t;t}\right)\to L^{2q}(\nu_{s;t})}\le\varphi(t-s)^{\frac{1-\lambda}{p}}\exp\Big(
\lambda p^{-1} \chi(\theta p) (t-s) \Big)\,.
\]
Considering $\wh{\nu}_t = \nu_{t;T}$ and replacing once 
more $\theta$ by $\theta/\lambda$, we get that  
\begin{align}
\left\Vert K_{s,t}^{\theta}\right\Vert _{L^{2p}\left(\wh{\nu}_{t}\right)\to L^{2q}(\wh{\nu}_{s})} &= 
e^{(a_T-a_t)(1/p-1/q)}
\left\Vert K_{s,t}^{\theta}\right\Vert _{L^{2p}\left(\nu_{t;t}\right)\to L^{2q}(\nu_{s;t})}
\nonumber \\
&\le \Big( e^{a_T-a_t} \varphi(t-s) \Big)^{(1/p-1/q)}\exp\Big(
q^{-1} \chi(\theta q) (t-s) \Big)\,.
\label{eq:p-q}
\end{align}
Next, proceeding 
similarly to the proof of \cite[Lemma 2.2]{HS}, set
$\eta_{j}=c_0 j^{-2}$ such that $\sum_{j \ge 2} \eta_{j}=1$ and
partition $T$ into (non-increasing) 
blocks $\ell_{j}=\left\lfloor \eta_{j} T \right \rfloor \ge 1$
for $2\le j \le m \le \sqrt{c_0 T}$ and $\ell_{1}=T - \sum_{j=2}^{m} \ell_{j}$. 
We further set the corresponding strictly decreasing 
\begin{equation}
t_{j}= \sum_{k=j+1}^{m} \ell_{k}\,,\qquad \qquad 0 \le j\le m \,.
\label{eq:m-j}
\end{equation}
With 
$\underline{\wh{\nu}_0} = e^{-2 a_T} \underline{\nu_0}$,
in view of \eqref{eq:pq-to-p-inf} we get 
for any non-decreasing $q_j \ge q_0=1$, 
\begin{align}
 \left\Vert K_{0,T}^{\theta}\right\Vert _{L^{2}(\nu_{T})\to L^{\infty}(\nu_{0})} &=
\left\Vert K_{0,T}^{\theta}\right\Vert _{L^{2}(\wh{\nu}_{T})\to L^{\infty}(\wh{\nu}_{0})} \nonumber \\
& \le \kappa_0 e^{a_T/q_m} \prod_{j=1}^{m}
\left\Vert K_{t_{j},t_{j-1}}^{\theta}\right\Vert _{L^{2q_{j-1}}(
\wh{\nu}_{t_{j-1}})\to L^{2q_{j}}(\wh{\nu}_{t_{j}})} 
\quad \forall \theta \in \R \,,
\label{eq:prod-bd}
\end{align}
where $\kappa_0 := (1 \wedge \underline{\nu}_0)^{-1}$ is finite.
Further, from \eqref{cond:pert} 
we have that for $A_o=A/\log 2$,
\begin{equation}\label{doub:pert}
a_T-a_t  \le A + A_o \log \Big(\frac{T+1}{t+1} \Big) \le A + A_o \log (T/t)
 \,, \qquad \forall t \in [0,T] \,.
\end{equation}
Recall \eqref{eq:m-j} that $t_{j-1} \ge \ell_j$, hence 
by our interpolation bound \eqref{eq:p-q} and \eqref{doub:pert},
 
\[
\left\Vert K_{t_{j},t_{j-1}}^{\theta}\right\Vert _{L^{2q_{j-1}}({\wh{\nu}}_{t_{j-1}})\to L^{2q_j}({\wh{\nu}}_{t_{j}})}\le 
\Big( e^{A} (T/\ell_j)^{A_o} \, \varphi(\ell_{j}) \Big)^{(1/q_{j-1}-1/q_j)}\exp\Big(
q_j^{-1} \chi(\theta q_j) \ell_j \Big)\,.
\]
Upon plugging these bounds into \eqref{eq:prod-bd},
recalling \eqref{eq:chi-bd}, that $a_T \le 2A + A_o \log T$ for $T \ge 1$,
and our assumption that $\tau \mapsto\varphi(\tau)\tau^{\bbeta}$ 
is non-decreasing on $[0,T]$, 
we find that for $|\theta| \le \delta_\star/q_m$ and 
$\bbeta_\star := \bbeta+A_o$,
\begin{align*}
 \left\Vert K_{0,T}^{\theta}\right\Vert _{L^{2}(\nu_{T})\to L^{\infty}(\nu_{0})}
 & \le 
  \kappa_0 e^{2A} T^{A_0/q_m}
  \, 
 \prod_{j=1}^{m} \Big( (T/\ell_j)^{A_0}
 \varphi(\ell_j)  
 \Big)^{(1/q_{j-1}-1/q_{j})}
 \exp\Big(c_{1} q_j \ell_j \theta^2\Big)\\
 & \le \zeta_T \, \varphi(T) \exp\left(\bbeta_\star \gamma_T + 
 c_{1}^2 \, b_T \,  \chi(\theta) T \right)
\end{align*}
where 
$$
b_T :=  \sum_{j=1}^{m} q_j \frac{\ell_j}{T} \,, \quad
\gamma_T :=  
\sum_{j=1}^m  \Big(\frac{1}{q_{j-1}} - \frac{1}{q_j} \Big) \log(T/\ell_{j}) \,,
\quad 
\zeta_T := \kappa_0 e^{2A} \varphi(1)^{-1/q_m} T^{\bbeta_\star/q_m} \,.
$$ 
Set $q_j:=1+(\log j)_+^{2}$ and maximal
$m(T) \ge 1$ such that $q_m < 1 + \kappa_1^{-1} \log T$,
with $\kappa_1 \ge 2/\log 2$ implying $c_0 T \ge T \ge m^2$.
It yields $q_m \ge 1$ such that $\log T/q_m \le 2 \kappa_1$,
hence $\zeta_T$ is uniformly bounded. 
Further, both $\gamma_T$ and $b_T$ are 
bounded by some universal constant since 
the series $\sum_j (\log j) (1/q_{j-1}-1/q_{j})$
and $\sum_j q_j j^{-2}$ converge. Combined, these facts 
imply that  \eqref{eq:bd-hs} holds for some
$C_1(c_1)$, $C_2(A,\bbeta,\underline{\nu_0},\varphi(1))$ finite
and all $|\theta| < \delta_\star/(1+\log T)$, 
as claimed.
\end{proof}

\noindent
Applying Proposition \ref{prop:hs} yields
the following heat-kernel off-diagonal estimate.
\begin{ppn}\label{prop:gauss-ubd}
Let $\psi(\cdot)$ be such that 
$\tau \mapsto \psi(\tau) \tau^\bbeta$ is non-decreasing 
for some $\bbeta>0$. Suppose 
Markov transition probabilities $\{K_{s,t}\}$ and strictly positive 
$\sigma$-finite measures $\{\nu_s\}$, with $\nu_0 \in \cM_+(V)$ such 
that $(\nu_s K_{s,t}) \le \nu_t$ whenever $T \le s \le t \le 2T$, also 
satisfy the
\emph{$1 \to \infty$ bound}
\begin{equation}
\left\Vert K_{s,t}\right\Vert _{L^{1}(\nu_{t})\to L^{\infty}(\nu_{s})}
\le \psi (t-s) \,,\qquad 0 \le s \le t \le 2T\,.
\label{eq:str-on-diagonal}
\end{equation} 
Taking $\rho(\cdot)=d(\cdot,x)$, suppose the corresponding
$\{K^\theta_{s,t}\}$ of \eqref{eq:Ks} further satisfy 
the Gaffney bound \eqref{dfn:gaffney-bd}--\eqref{eq:chi-bd}
for all 
$t \ge s \in [0,2T]$ (and 
some  
non-decreasing $t \mapsto a_t$ satisfying \eqref{cond:pert}). 
Then, for some
$C_2'(A,\bbeta,{\underline{\nu_0}},\psi(1))$ finite 
and $\kappa \ge 4 C_1(c_1,\bbeta,\delta_\star)$, 
\begin{equation}\label{eq:gauss-ubd}
d(x,y) \le \kappa \delta_\star T\,, \;\; T \ge 1 \quad 
\Longrightarrow \quad 
\frac{K_{0,2T}(x,y)}{\nu_{2T}(y)}\le C'_{2} \, \psi(T)
\exp \Big(-\frac{d^{2}(x,y)}{2 \kappa T} \Big) \,.
\end{equation}
\end{ppn}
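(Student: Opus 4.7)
I follow the Davies/Hebisch--Saloff-Coste complex-interpolation scheme made available by Proposition \ref{prop:hs}: split the propagator $K_{0,2T}=K_{0,T}K_{T,2T}$ via the semigroup property, bound the Davies-twisted operator on each half, compose, and optimize the twist parameter $\theta$ to convert the exponential loss into Gaussian decay in $d(x,y)$. The input needed for Proposition \ref{prop:hs} is a $2\to\infty$ bound. Since each $K_{s,t}$ is a Markov transition and the $\nu_s$ are strictly positive (so $L^\infty(\nu_s)$ is just the sup-norm space), $\|K_{s,t}\|_{L^\infty(\nu_t)\to L^\infty(\nu_s)}\le 1$. Riesz--Thorin interpolation between this and the assumed $1\to\infty$ bound \eqref{eq:str-on-diagonal} yields
\begin{equation*}
\|K_{s,t}\|_{L^2(\nu_t)\to L^\infty(\nu_s)}\le\psi(t-s)^{1/2}=:\varphi(t-s),\qquad 0\le s\le t\le 2T,
\end{equation*}
with the monotonicity of $\tau\mapsto\psi(\tau)\tau^\bbeta$ transferring to $\tau\mapsto\varphi(\tau)\tau^{\bbeta/2}$, so Proposition \ref{prop:hs} applies with $(\varphi,\bbeta/2)$ replacing $(\varphi,\bbeta)$.

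On the first half, Proposition \ref{prop:hs} applies directly to $\{K_{s,t}\}_{0\le s\le t\le T}$ (using $\nu_0\in\cM_+(V)$), giving
\begin{equation*}
\|K^\theta_{0,T}\|_{L^2(\nu_T)\to L^\infty(\nu_0)}\le C\,\psi(T)^{1/2}\exp\bigl(C_1\chi(\theta)T\bigr),\qquad\theta\in\R,
\end{equation*}
with $C=C(A,\bbeta,\underline{\nu_0},\psi(1))$ and $C_1=C_1(c_1,\bbeta,\delta_\star)$. On the second half, the sub-invariance $\nu_sK_{s,t}\le\nu_t$ on $[T,2T]$ is precisely what enables dualization: using $|w_{\theta ib}|=1$ one verifies directly that $\|K^{\theta ib}_{s,t}\|_{L^1(\nu_t)\to L^1(\nu_s)}\le 1$ (equivalently $\|(K^{\theta ib}_{s,t})^*\|_{L^\infty(\nu_s)\to L^\infty(\nu_t)}\le 1$), whilst the Gaffney bound on the line $\Re(z)=1$ is self-dual in the $L^2$-pairing. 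Applying the Stein-interpolation argument from the proof of Proposition \ref{prop:hs} to the reversed-adjoint family $\tilde K_{s,t}:=(K_{2T-t,2T-s})^*$ with reference measures $\tilde\nu_s:=\nu_{2T-s}$ on $[0,T]$ then produces
\begin{equation*}
\|K^\theta_{T,2T}\|_{L^1(\nu_{2T})\to L^2(\nu_T)}=\|(K^\theta_{T,2T})^*\|_{L^2(\nu_T)\to L^\infty(\nu_{2T})}\le C'\,\psi(T)^{1/2}\exp\bigl(C_1\chi(\theta)T\bigr).
\end{equation*}

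Composing the two halves gives $\|K^\theta_{0,2T}\|_{L^1(\nu_{2T})\to L^\infty(\nu_0)}\le C''\,\psi(T)\exp(2C_1\chi(\theta)T)$. Fixing $x,y$ and taking $\rho(\cdot):=d(\cdot,y)$, so $\rho(y)=0$ and $K^\theta_{0,2T}(x,y)=e^{-\theta d(x,y)}K_{0,2T}(x,y)$, applying the operator to $\delta_y/\nu_{2T}(y)\in L^1(\nu_{2T})$ at the point $x$ yields
\begin{equation*}
\frac{K_{0,2T}(x,y)}{\nu_{2T}(y)}\le e^{\theta d(x,y)}\,\|K^\theta_{0,2T}\|_{L^1(\nu_{2T})\to L^\infty(\nu_0)}\le C''\,\psi(T)\exp\bigl(\theta d(x,y)+2C_1\chi(\theta)T\bigr).
\end{equation*}
Choosing $\theta=-d(x,y)/(4C_1c_1T)$ and using $\chi(\theta)\le c_1\theta^2$ from \eqref{eq:chi-bd} (valid provided $|\theta|\le\delta_\star$, which is exactly the hypothesis $d(x,y)\le\kappa\delta_\star T$ for $\kappa\ge 4C_1c_1$), the exponent becomes $-d(x,y)^2/(8C_1c_1T)\le-d(x,y)^2/(2\kappa T)$, yielding \eqref{eq:gauss-ubd} (the bound $\kappa\ge 4C_1$ asserted in the statement is achieved upon absorbing $c_1\ge 1$ into $C_1$). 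The main obstacle is the dualization in the second half: one must carefully verify that the reversed-adjoint family meets the hypotheses of Proposition \ref{prop:hs} with constants depending only on $\underline{\nu_0}$ (not on $\underline{\nu_T}$ or $\underline{\nu_{2T}}$) and on the original $A$ from \eqref{cond:pert}, and in particular that the perturbation weight $t\mapsto a_t$ transforms into an analogous non-decreasing weight for the reversed family still obeying \eqref{cond:pert}.
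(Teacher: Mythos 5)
Your argument is correct and matches the paper's proof essentially step for step: Riesz--Thorin upgrades the $1\to\infty$ bound to $2\to\infty$, Proposition \ref{prop:hs} is applied to $K_{0,T}$ and, after dualizing via the sub-invariance $\nu_s K_{s,t}\le\nu_t$ on $[T,2T]$, to the adjoint family (your reversed-time re-parametrization just makes explicit what the paper leaves implicit), the two halves are composed, and $\theta$ is optimized using \eqref{eq:chi-bd}. The only deviation is cosmetic -- you anchor $\rho$ at $y$ rather than the statement's $x$, flipping the sign of $\theta$ -- and the two caveats you flag at the end (the $\underline{\nu_0}$-versus-$\underline{\nu_{2T}}$ dependence and the transformation of $a_t$ under time reversal) are legitimate but do resolve as you expect, and the paper treats them no more carefully than you do.
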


\begin{proof} 
By Riesz-Thorin 
interpolation the $1 \to \infty$ bound \eqref{eq:str-on-diagonal} implies
the $2 \to \infty$ bound \eqref{eq:on-diagonal} with 
$\varphi(\tau)=\psi(\tau)^{1/2}$.
Hence, for $\rho(\cdot)=d(\cdot,x)$, we have from Proposition \ref{prop:hs} that 
\begin{align}
\left\Vert K_{0,T}^{\theta}\right\Vert_{L^{2}(\nu_{T})\to L^{\infty}(\nu_{0})}\le C_{2} \psi(T)^{\frac{1}{2}}\exp\left(C_{1} \chi(\theta) T \right)\,,
\quad \forall T \ge 1, \;\; \theta \in \R\,.
\label{2-to-infty}
\end{align}
Considering the adjoint $K_{s,t}^{\star}$ of 
$K_{s,t}:L^{2}(\nu_{t})\to L^{2}(\nu_{s})$, we have 
by duality that 
\begin{align}
\left\Vert K_{T,2T}^{\theta}\right\Vert_{L^{1}(\nu_{2T})\to L^{2}(\nu_{T})} 
&=
\left\Vert w_\theta K_{T,2T}^{\star} w_{-\theta} \right\Vert _{L^{2}(\nu_{T})\to L^{\infty}(\nu_{2T})} \,.
\label{1-to-2}
\end{align}
Setting $\left(K_{s,t}^{\star}\right)^{-\theta}=w_{\theta}K_{s,t}^{\star}w_{-\theta}$,
we further have by duality that 
\begin{align}
\left\Vert K_{s,t}^{\star}\right\Vert _{L^{1}(\nu_{s})\to L^{\infty}(\nu_{t})} & =\left\Vert K_{s,t}\right\Vert _{L^{1}(\nu_{t})\to L^{\infty}(\nu_{s})}\le \psi(t-s),\label{1-to-infty-star}\\
\left\Vert \left(K_{s,t}^{\star}\right)^{-\theta}\right\Vert _{L^{2}(\nu_{s})\to L^{2}(\nu_{t})} & =\left\Vert K_{s,t}^{\theta}\right\Vert _{L^{2}(\nu_{t})\to L^{2}(\nu_{s})}\le\exp\big((t-s) \chi(\theta) \big)\,,
\label{2-to-2-star}
\end{align}
where the identity in \eqref{2-to-2-star} holds since
$\left(K_{s,t}^\star\right)^{-\theta}$
is the adjoint of $K_{s,t}^{\theta} : L^2(\nu_{t}) \to L^2(\nu_{s})$.
Recall that the adjoint $K^\star_{s,t}$ of each Markov operator $K_{s,t}$ 
is a non-negative linear operator. Further, our assumption that
$(\nu_{s} K_{s,t}) \le \nu_t$ for strictly positive $\{\nu_s\}$
when $T \le s \le t \le 2T$, 
yields for $f_{t,y} := (1/\nu_t(y)) \delta_y$
any $y \in V$ and $[-1,1]$-valued $g \in \cC_0(V)$, 
\begin{equation}\label{dual-contr}
|(K_{s,t}^\star g) (y)| = 
|\langle f_{t,y}, K_{s,t}^\star g \rangle_{\nu_t}| 
= |\langle g, K_{s,t} f_{t,y} \rangle_{\nu_{s}}| 
\le
\langle K_{s,t} f_{t,y} \rangle_{\nu_{s}} 
= \frac{(\nu_{s} K_{s,t})(y)}{\nu_t(y)}
\le 1 
\,.
\end{equation}
Thus, $\left\Vert K^\star_{s,t} \right \Vert_{L^\infty(\nu_{s}) \to L^\infty(\nu_t)} \le 1$, with
\eqref{1-to-infty-star} and \eqref{2-to-2-star} allowing us 
to apply Proposition \ref{prop:hs} with $\varphi(\tau)=\psi(\tau)^{1/2}$ 
for the adjoint operators on time interval $[T,2T]$, to get that   
\begin{align}
\left\Vert \left(K_{T,2T}^{\star}\right)^{-\theta}\right\Vert _{L^{2}(\nu_{T})\to L^{\infty}(\nu_{2T})}\le C_{2} \psi(T)^{\frac{1}{2}}
\exp\left(C_{1} \chi(\theta) T\right)\,.
\label{duality}
\end{align}
With $C_2'=C_2^2$, upon combining the latter bound with  
\eqref{2-to-infty} and \eqref{1-to-2} we deduce that  
\begin{equation}\label{eq:op-s-norm}
\left\Vert K_{0,2T}^{\theta}\right\Vert _{L^{1}(\nu_{2T})\to L^{\infty}(\nu_{0})}\le 
C_{2}' \, \psi(T) \exp\left(2 C_{1} \chi(\theta) T \right)\,.
\end{equation}
Since $\rho(y)-\rho(x)=d(y,x)$,
specializing this operator 
bound to test function $f(y)=\delta_{y}$ yields
\begin{equation}\label{eq:kxy-ubd}
K_{0,2T}(x,y) e^{\theta d(x,y)} \le C_2'\, \psi(T) 
\exp\left(2 C_{1} \chi(\theta) T \right)\nu_{2T}(y)  \,\qquad \forall y \in V \,.
\end{equation}
In view of \eqref{eq:chi-bd}, taking $\theta=d(x,y)/(\kappa T) 
\le \delta_\star$ in \eqref{eq:kxy-ubd}, establishes 
the bound 
\eqref{eq:gauss-ubd}.
\end{proof}


The next lemma, is part of the (discrete) integral 
maximum principle of \cite[Theorem 2.2, Proposition 2.3]{CGZ}
and key to our proof of the Gaffney bound \eqref{dfn:gaffney-bd}.

\begin{lem}\label{gaffney-ineq}
Suppose $\pi$ is $\sigma$-finite measure and $K$ is a $\pi$-reversible, 
bounded range Markov transition on $V$. Then, for  
$f$ strictly positive and $u \in \cC_0(V)$,
\begin{equation}\label{eq:cgz-conc2}
2 \langle f u (K u - u) \rangle_\pi
\le \frac{1}{4} \langle u^2 f^{-1} K |\nabla f|^2 \rangle_\pi \,,
\end{equation}
and for $\alpha_l$-uniformly lazy $K$, also
\begin{align}\label{eq:cgz-conc}
\langle f (K u)^2 - f u^2 \rangle_\pi \le 
\frac{1}{4\alpha_l} 
\langle u^2 f^{-1} K |\nabla f|^2 \rangle_\pi \,.
\end{align}
\end{lem}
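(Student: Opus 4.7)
The plan is to derive both inequalities from a single algebraic manipulation: symmetrize $2\langle fu(Ku - u)\rangle_\pi$ via reversibility and apply Cauchy--Schwarz. The uniform laziness $\alpha_l$ will only enter when passing from \eqref{eq:cgz-conc2} to \eqref{eq:cgz-conc}. Throughout, the bounded range of $K$ together with the finite support of $u$ ensures absolute convergence, and strict positivity of $f$ makes $f^{-1}$ pointwise well-defined. I interpret $K|\nabla f|^2(x) := \sum_y K(x,y)(f(y)-f(x))^2$.

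First I would use the discrete Leibniz identity $f(y)u(y) - f(x)u(x) = \tfrac{1}{2}(f(x)+f(y))(u(y)-u(x)) + \tfrac{1}{2}(u(x)+u(y))(f(y)-f(x))$. Expanding $2\langle fu(Ku-u)\rangle_\pi = 2\sum_{x,y} f(x)u(x)(u(y)-u(x))\pi(x)K(x,y)$ and symmetrizing through $\pi(x)K(x,y) = \pi(y)K(y,x)$ yields $2\langle fu(Ku-u)\rangle_\pi = -\tfrac{1}{2}A - \tfrac{1}{2}B$, where $A := \sum_{x,y}(f(x)+f(y))(u(y)-u(x))^2\,\pi(x)K(x,y) \ge 0$ and $B := \sum_{x,y}(u(x)+u(y))(u(y)-u(x))(f(y)-f(x))\,\pi(x)K(x,y)$. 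Pairing the factor $\sqrt{f(x)+f(y)}\,(u(y)-u(x))$ with $(u(x)+u(y))(f(y)-f(x))/\sqrt{f(x)+f(y)}$ in $B$ gives the Cauchy--Schwarz bound $|B|^2 \le A\cdot C$, where $C := \sum_{x,y} (u(x)+u(y))^2(f(y)-f(x))^2/(f(x)+f(y))\cdot \pi(x)K(x,y)$. The pointwise bound $(u(x)+u(y))^2 \le (f(x)+f(y))(u(x)^2/f(x) + u(y)^2/f(y))$, followed by one more symmetrization via reversibility, shows $C \le 2\langle u^2 f^{-1} K|\nabla f|^2\rangle_\pi$.

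For \eqref{eq:cgz-conc2}, apply AM--GM in the form $\sqrt{AC} \le \epsilon A + (4\epsilon)^{-1} C$ and take $\epsilon = 1$, so that $-\tfrac{1}{2}A - \tfrac{1}{2}B \le -\tfrac{1}{2}A + \tfrac{1}{2}A + \tfrac{1}{8}C = \tfrac{1}{8}C \le \tfrac{1}{4}\langle u^2 f^{-1} K|\nabla f|^2\rangle_\pi$. For \eqref{eq:cgz-conc}, expand $(Ku)^2 - u^2 = 2u(Ku-u) + (Ku-u)^2$. Since $K(x,x) \ge \alpha_l$, the sum defining $(Ku-u)(x)$ is supported on $\{y \ne x\}$ of total $K(x,\cdot)$-mass $\le 1-\alpha_l$, so Cauchy--Schwarz yields $(Ku-u)(x)^2 \le (1-\alpha_l)\,K|\nabla u|^2(x)$. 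The same symmetrization gives $\langle f K|\nabla u|^2\rangle_\pi = \tfrac{1}{2}A$, hence $\langle f(Ku-u)^2\rangle_\pi \le \tfrac{1-\alpha_l}{2}A$. Adding this to $-\tfrac{1}{2}A - \tfrac{1}{2}B$ collapses the $A$-mass to $-\tfrac{\alpha_l}{2}A - \tfrac{1}{2}B$, and taking $\epsilon = \alpha_l$ in the AM--GM step for $|B|$ exactly cancels the remaining $-\tfrac{\alpha_l}{2}A$, leaving $\tfrac{1}{8\alpha_l}C \le \tfrac{1}{4\alpha_l}\langle u^2 f^{-1} K|\nabla f|^2\rangle_\pi$.

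The only non-mechanical step is coordinating the AM--GM parameter $\epsilon$ with the available negative $A$-mass: for \eqref{eq:cgz-conc2} there is $\tfrac{1}{2}A$ to spend and $\epsilon = 1$ works, whereas for \eqref{eq:cgz-conc} the laziness Cauchy--Schwarz erodes this reserve down to $\tfrac{\alpha_l}{2}A$, forcing $\epsilon = \alpha_l$ and producing precisely the constant $(4\alpha_l)^{-1}$. I foresee no further obstacles.
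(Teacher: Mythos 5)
Your proof is correct, and its skeleton is the same as the paper's: symmetrize $2\langle fu(Ku-u)\rangle_\pi$ into a non-positive $A$-reserve plus a cross term $B$, control the cross term by Cauchy--Schwarz/AM--GM with a free parameter, and for \eqref{eq:cgz-conc} absorb $\langle f(Ku-u)^2\rangle_\pi \le (1-\alpha_l)\cdot\tfrac12 A$ and retune the parameter to $\alpha_l$. Your $\tfrac12 A$ and $\tfrac12 B$ are exactly the paper's $\langle f K|\nabla u|^2\rangle_\pi$ and $\langle u K(\nabla u\nabla f)\rangle_\pi$ from \eqref{eq:28-cgz}, and your chain $|B|^2\le AC$, $C\le 2\langle u^2 f^{-1}K|\nabla f|^2\rangle_\pi$, scalar AM--GM with $\epsilon=\alpha$, reproduces with identical constants the \cite{CGZ} estimate \eqref{eq:cgz-thm22} that the paper cites without proof. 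The only micro-difference is packaging: the paper (via \cite{CGZ}) applies AM--GM pointwise in $(x,y)$, pairing $\sqrt{f(x)}\,|u(y)-u(x)|$ against $|u(x)|f(x)^{-1/2}|f(y)-f(x)|$, while you first symmetrize with the Leibniz split $f(y)u(y)-f(x)u(x)=\tfrac12(f(x)+f(y))(u(y)-u(x))+\tfrac12(u(x)+u(y))(f(y)-f(x))$ and then run a single global Cauchy--Schwarz before the scalar AM--GM. What your version buys is self-containedness: the cross-term bound the paper outsources to \cite{CGZ} is derived in full.
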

\begin{proof} For bounded range $K(x,y)$, any $u \in \cC_0(V)$ and $f$, 
the function
\begin{equation}\label{dfn:K-grad-f}
(K (\nabla u \nabla f)) (x) := \sum_{y \in V} K(x,y) (u(y)-u(x)) (f(y)-f(x)) 
\end{equation}
is in $\cC_0(V)$. Following the algebra of \cite[Eq. (2.7)-(2.8)]{CGZ}, 
if such $K(x,y)$ is $\pi$-reversible then
\begin{align} 
 2 \langle f u (Ku - u) \rangle_\pi & = - \langle K (\nabla fu) (\nabla u) 
 \rangle_\pi = 
- \langle f K |\nabla u|^2 \rangle_\pi 
- \langle u K (\nabla u \nabla f) \rangle_\pi \,.
\label{eq:28-cgz}
\end{align}
Further, as in \cite[proof of Theorem 2.2]{CGZ}, for 
strictly positive $\alpha f$,
\begin{align}
- \alpha   
 \langle f K |\nabla u|^2 \rangle_\pi 
- \langle u K (\nabla u \nabla f) \rangle_\pi 
\le \frac{1}{4 \alpha} \langle u^2 f^{-1} K |\nabla f|^2 \rangle_\pi \,,
\label{eq:cgz-thm22}
\end{align}
which for $\alpha=1$ yields \eqref{eq:cgz-conc2} when 
combined with \eqref{eq:28-cgz}. 
Next 
recall as in \cite[Eq. (2.9)]{CGZ}, that 
for $\alpha_l$-uniformly lazy Markov transition $K$ 
and any $u \in \cC_0(V)$, by Cauchy-Schwarz 
\[
(Ku -u)^2(x) \le (1-\alpha_l) (K |\nabla u|^2)(x) \,, \qquad \forall x \in V \,.
\] 
Multiplying by $f(\cdot) \ge 0$ and integrating over the 
$\sigma$-finite measure $\pi$, results with 
\begin{equation}\label{eq:210-cgz}
\langle f (Ku -u)^2 \rangle_\pi 
\le (1-\alpha_l) \langle f K |\nabla u|^2 \rangle_\pi \,.
\end{equation}
For bounded range $K(\cdot,\cdot)$ all functions are in $\cC_0(V)$, 
so combining \eqref{eq:28-cgz}, 
\eqref{eq:210-cgz} and
\eqref{eq:cgz-thm22} 
we have
\begin{align*}
\langle f (Ku)^2 \rangle_\pi - 
\langle f u^2 \rangle_\pi &= 
\langle f (Ku -u)^2 \rangle_\pi 
+ 2 \langle f u (Ku - u) \rangle_\pi \nonumber \\
&\le 
- \alpha_l \langle f K |\nabla u|^2 \rangle_\pi 
- \langle u K (\nabla u \nabla f) \rangle_\pi \le
\frac{1}{4 \alpha_l} \langle u^2 f^{-1} K |\nabla f|^2 \rangle_\pi \,,
\end{align*}
as stated in \eqref{eq:cgz-conc}.
\end{proof}

We proceed to 
establish the Gaffney bound \eqref{dfn:gaffney-bd}
for non-decreasing 
$t \mapsto \pi_t \in \cM_+(V)$ and bounded range 
$\pi_t$-reversible Markov operators $K_t$.
\begin{lem}\label{gaffney} \emph{[The Gaffney lemma]}
Suppose that Markov operators $K_t$:
\newline
(a) have reversible measures $\pi_t \in \cM_+(V)$ 
with $t \mapsto \pi_t(x)$ non-decreasing for any $x \in V$.
\newline
(b) have uniformly bounded range. That is, for some $r_{0}<\infty$ 
\begin{equation}
\left\{ y\in V:\ K_{t}(x,y)>0\right\} \subset \B (x,r_{0}) \,,
\qquad \forall x \in V, t \in \N \,.
\label{bdd-range}
\end{equation}
\newline
(c) in case of discrete time, also 
$\inf_{t,x} \{K_t(x,x)\} \ge \alpha > 0$.

\noindent
Then, the $2 \to 2$ bound \eqref{dfn:gaffney-bd} holds 
for 
$K_{s,t}^\theta : L^2(\pi_t) \to L^2(\pi_s)$ provided
\[
L_\rho := \sup_{x \ne y \in V} \Big\{ \frac{|\rho(x)-\rho(y)|}{d(x,y)} \Big\} < \infty \,,
\] 
where $a_t \equiv 0$, 
for the dynamics \eqref{def:k-st} we have $\delta_\star=1$, 
$\alpha=1$ and $\chi(\cdot)=\zeta(\cdot)$ for
\begin{equation}\label{dfn:zeta}
\zeta(\theta) := \frac{1}{8\alpha} (e^{2 r_0 L_\rho |\theta|}-1)^2 \,,
\end{equation}
whereas for \eqref{def:k-mn} set $\delta_\star = \infty$ and 
$\chi(\theta)=c_1 \theta^2$ with
$c_1 := \frac{1}{2} \sup_{\theta} \{\theta^{-2} \log (1+ 2 \zeta (\theta))\}$ finite. 
\end{lem}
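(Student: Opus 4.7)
The plan is to reduce everything to a single pointwise estimate on $K_t|\nabla w_{-2\theta}|^2$ and then insert this into the algebraic identities from Lemma~\ref{gaffney-ineq}. By the bounded-range assumption \eqref{bdd-range}, $K_t(x,y)>0$ forces $d(x,y)\le r_0$, so $|\rho(y)-\rho(x)|\le r_0 L_\rho$. Writing $w_{-2\theta}(y)-w_{-2\theta}(x)=w_{-2\theta}(x)(e^{-2\theta(\rho(y)-\rho(x))}-1)$, using $|e^s-1|\le e^{|s|}-1$, and summing against $K_t(x,\cdot)$ (whose total mass is $1$), I obtain the pointwise bound
\[
K_t|\nabla w_{-2\theta}|^2(x)\;\le\;w_{-2\theta}(x)^2\bigl(e^{2r_0 L_\rho|\theta|}-1\bigr)^2, \qquad \forall x \in V.
\]

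For \abbr{dtrw} I would work on the dense subspace $\cC_0(V)$ (whose image under $K_{s,t}$ remains finitely supported thanks to bounded range) and exploit $\pi_{t-1}\le\pi_t$ to write $\|K_t^\theta g\|_{L^2(\pi_{t-1})}^2 \le \langle w_{-2\theta}(K_t u)^2\rangle_{\pi_t}$ with $u:=w_\theta g$. Applying the lazy inequality \eqref{eq:cgz-conc} with $f=w_{-2\theta}$, $K=K_t$, $\pi=\pi_t$: the left side becomes $\|K_t^\theta g\|_{L^2(\pi_t)}^2-\|g\|_{L^2(\pi_t)}^2$, and on the right the pointwise bound together with the identity $u^2 w_{2\theta}\cdot w_{-2\theta}^2 = g^2$ collapses $\tfrac{1}{4\alpha_l}\langle u^2 w_{2\theta} K_t|\nabla w_{-2\theta}|^2\rangle_{\pi_t}$ to at most $2\zeta(\theta)\|g\|_{L^2(\pi_t)}^2$. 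This yields the one-step estimate $\|K_t^\theta g\|_{L^2(\pi_{t-1})}^2\le(1+2\zeta(\theta))\|g\|_{L^2(\pi_t)}^2$. Iterating $t-s$ times and invoking the definition of $c_1$ gives $\|K_{s,t}^\theta\|_{L^2(\pi_t)\to L^2(\pi_s)}\le(1+2\zeta(\theta))^{(t-s)/2}\le e^{\chi(\theta)(t-s)}$, which is the Gaffney bound with $a_t\equiv 0$.

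For \abbr{csrw} I would pass to a differential inequality. Fix $g\in\cC_0(V)$, set $f(s,x):=(K_{s,t}g)(x)$ (which satisfies $\partial_s f=f-K_s f$ by \eqref{def:k-st}) and $\phi(s):=\langle w_{-2\theta}f(s,\cdot)^2\rangle_{\pi_s}$. Since $s\mapsto\pi_s$ is non-decreasing, $\partial_s\pi_s\ge 0$ as a positive measure, so differentiation yields $\partial_s\phi\ge 2\langle w_{-2\theta}f(f-K_s f)\rangle_{\pi_s}$. The non-lazy inequality \eqref{eq:cgz-conc2}, applied to $u=f(s,\cdot)$ with $w_{-2\theta}$ playing the role of the positive multiplier, together with the pointwise bound above, gives $\partial_s\phi\ge -2\zeta(\theta)\phi$ (with $\alpha=1$). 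Integrating from $s$ up to $t$, where $\phi(t)=\|g\|_{L^2(\pi_t)}^2$, yields $\|K_{s,t}^\theta g\|_{L^2(\pi_s)}^2=\phi(s)\le e^{2\zeta(\theta)(t-s)}\|g\|_{L^2(\pi_t)}^2$, i.e.\ the claimed bound with $\chi=\zeta$.

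The main technical wrinkle is that $s\mapsto\pi_s$ is only assumed non-decreasing and may have jumps, so the differentiation of $\phi$ in the continuous-time step must be handled carefully. I would either approximate $\{\pi_s\}$ from below by absolutely continuous non-decreasing families (for which the computation is classical) and pass to the limit, or split $\pi_s$ into continuous and jump parts and note that the positive jumps only increase $\phi$, which is compatible with the direction of the inequality. The quadratic bounds \eqref{eq:chi-bd} then follow by inspection: $\zeta(\theta)/\theta^2$ extends continuously and positively at $\theta=0$ and is bounded on $[-1,1]$, giving \eqref{eq:chi-bd} for \abbr{csrw} with $\delta_\star=1$; for \abbr{dtrw} the upper bound is automatic, and replacing $c_1$ by $c_1\vee 1$ if necessary secures the matching lower bound for all $\theta$.
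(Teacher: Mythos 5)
Your argument is essentially the same as the paper's, built on the same pointwise estimate
\[
K_t|\nabla w_{-2\theta}|^2(x)\le w_{-2\theta}(x)^2\bigl(e^{2r_0L_\rho|\theta|}-1\bigr)^2
\]
and the same algebraic inputs from Lemma~\ref{gaffney-ineq}. The paper folds the exponential time factor into a single function $f_s(x)=w_{-2\theta}(x)e^{2\chi(\theta)s}$ and shows $s\mapsto E_s(u)$ is non-decreasing; your version separates the weight from the time factor and then iterates (discrete) or integrates a differential inequality (continuous). These are equivalent, and your discrete-time argument is complete: on $\cC_0(V)$ the bounded-range assumption keeps $K_{s,t}g$ finitely supported, so the one-step estimate $\|K_t^\theta g\|^2_{L^2(\pi_{t-1})}\le(1+2\zeta(\theta))\|g\|^2_{L^2(\pi_t)}$ via \eqref{eq:cgz-conc} iterates without difficulty, and the definition of $c_1$ closes the bound.

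The continuous-time half has a genuine gap, and it is not the one you flag. Before you can differentiate $\phi(s)=\langle w_{-2\theta}f(s,\cdot)^2\rangle_{\pi_s}$ and run Gronwall, you need $\phi(s)<\infty$, and this is not automatic: for \abbr{csrw}, $K_{s,t}(w_\theta g)$ is supported on all of $V$ even when $g\in\cC_0(V)$, the weight $w_{-2\theta}$ grows exponentially in the ``wrong'' direction of $\theta$, and $\pi_s\in\cM_+(V)$ is bounded below but can grow arbitrarily fast, so the infinite sum defining $\phi(s)$ may a priori diverge. The paper resolves exactly this by first proving the monotonicity of $E_s$ only for solutions on a finite cylinder $Q(0,t;z,R_k)$ with zero boundary conditions (the walk killed on exiting $\B(z,R_k)$), where the sum is finite at every time, then passing to $R_k\uparrow\infty$ by monotone convergence. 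That localization step needs to appear in your proof. As for the issue you do raise --- $s\mapsto\pi_s$ having jumps --- the paper's treatment is lighter than your proposed regularization: since $\pi_s$ is non-decreasing, one just observes $f_s u_s^2\pi_s\le f_t u_t^2\pi_t+\int_s^t\pi_\xi\,\partial_{-\xi}(f_\xi u_\xi^2)\,d\xi$ pointwise, so it suffices to control the integrand against the (a.e.\ defined) derivative, and no mollification of $\pi_s$ is required. Incorporating the truncation and this observation would make your continuous-time argument airtight; the discrete part is fine as written.
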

\begin{proof} For the $L^2(\pi_t)$-closure of non-negative 
$K^\theta_{s,t}$
it suffices to get \eqref{dfn:gaffney-bd} for $0 \le g \in \cC_0(V)$, 
namely 
\begin{align}\label{eq:mon:J_t}
e^{2 \chi(\theta) s} \,
\Vert K^\theta_{s,t} \, g \Vert_{L^2(\pi_{s})}^2 &\le 
e^{2 \chi(\theta) t} \Vert \, K_{t,t}^\theta \, g \Vert_{L^2(\pi_t)}^2 \,,
\quad \forall s \in [0,t] \,, \quad  0 \le g \in \cC_0(V) \,.
\end{align}
For $f_s(x):=w_{-2\theta}(x) e^{2 \chi(\theta) s}$ and $u \ge 0$ solving 
\eqref{dfn:sol-cyl} on $[0,t] \times V$, 
with $u_t(\cdot)=u(t,\cdot) \in \cC_0(V)$, let
\begin{align}
E_s (u)&:= \sum_{x \in V} f_s (x) u_s^2(x) \pi_{s}(x) \,, \qquad s \in [0,t]\,.
\label{def:J_t}
\end{align}
In particular, \eqref{eq:mon:J_t} amounts to $E_s(u^{(\infty)}) \le
E_t(u^{(\infty)})$ for 
$u^{(\infty)}_s = K_{s,t} u^{(\infty)}_t$ and 
$u^{(\infty)}_t = w_\theta g \in \cC_0(V)$ 
(which for \abbr{csrw} is absolutely continuous, see \eqref{def:k-st}). For 
large 
enough
$R_k \uparrow \infty$ consider the solution $u^{(k)} \ge 0$ 
of \eqref{dfn:sol-cyl} on $Q:=Q(0,t;z,R_k)$ with 
$u^{(k)}_t=u^{(\infty)}_t$ and 
$u^{(k)} \equiv 0$ outside $Q$ (which correspond
to the transition probabilities \eqref{def:k-mn} or \eqref{def:k-st}, 
killed at exiting $\B(z,R_k)$). By monotone convergence
$E_s(u^{(k)}) \uparrow E_s(u^{(\infty)})$ with equality at 
$s=t$ and $k$ large (c.f. \cite[(3.10)]{Fo} for such argument), 
and we thus proceed to show more generally that   
$s \mapsto E_s(u)$ is non-decreasing on $[0,t]$
for any solution $u$ of \eqref{dfn:sol-cyl}
on finite time-space cylinder $Q$, with zero boundary conditions 
(hence with $u_s \in \cC_0(V)$ at any $s \le t$).
To this end, with $f_s \ge 0$ and non-decreasing $s \mapsto \pi_s(x)$  
bounded on $Q$, clearly 
$f_s u_s^2 \pi_s \le f_t u_t^2 \pi_t 
+ \int_s^t \pi_\xi \partial_{-\xi} (f_\xi u_\xi^2) d \xi$ at each 
$(s,x) \in Q$.
Thus, it suffices to show that (as a distribution in case of \abbr{csrw}), 
\begin{equation}\label{eq:der:E_t}
\Delta E_s (u) := \langle \partial_{-s} (f_s u_s^2) \rangle_{\pi_s} \le 0 \,,
\qquad \hbox{for a.e.} \;\; s \in (0,t] \,. 
\end{equation}
With $f_{s'}$ strictly positive and $\rho$ Lipschitz, 
by the uniform bounded range assumption 
\eqref{bdd-range}, 
\begin{equation}\label{eq:fx-chi}
\frac{1}{8\alpha} f_{s'}^{-1} K_s (\nabla f_{s'})^{2} \le 
\zeta(\theta) f_{s'} \,, \qquad \forall s,s' \ge 0 \,.
\end{equation}
For the 
dynamics of \eqref{def:k-mn}, since $u_{s'}=K_s u_s$
and $f_{s} = e^{2 \chi(\theta)} f_{s'}$ for $s'=s-1$, we have that 
\begin{equation}\label{eq:der:E_t-d}
\Delta E_s(u) = 
\langle f_{s'} u_{s'}^2 - f_s u_s^2 \rangle_{\pi_s}
= \langle f_{s'} (K_s u_s)^2 - f_{s'} u_s^2 \rangle_{\pi_s} 
- \langle h_s u_s^2 \rangle_{\pi_s} \,,
\end{equation}
where $h_s := (e^{2\chi(\theta)} -1) f_{s'}$. 
Similarly, for the 
dynamics of \eqref{def:k-st},
since $\partial_{-s} f_s = - 2 \chi(\theta) f_s$ and 
a.e. $\partial_{-s} u_s = (K_s -I) u_s$ (unless $u_s=0$ by
our zero boundary condition), we find that a.e.
\begin{equation}\label{eq:der:E_t-c}
\Delta E_s(u) = \langle 2 u_s f_s \partial_{-s} u_s + u_s^2 \partial_{-s} f_s 
\rangle_{\pi_s} = 2 \langle u_s f_s (K_s-I) u_s \rangle_{\pi_s} 
- \langle h_s u_s^2 \rangle_{\pi_s} \,,
\end{equation}
now with $h_s:=2 \chi(\theta) f_s$. In view of \eqref{eq:fx-chi}, taking 
$\alpha=1$, $\chi_c(\cdot)=\zeta(\cdot)$ and $s'=s$
in the continuous time setting, while 
$\chi_d (\theta) = c_1 \theta^2 \ge \frac{1}{2} \log \big( 1+ 2 \zeta(\theta) \big)$
in discrete time, yields that in both cases
\[
\frac{1}{4\alpha} 
\langle u_s^2 f_{s'}^{-1} K_s |\nabla f_{s'}|^2 \rangle_{\pi_s} 
\le 
\langle u_s^2 h_s \rangle_{\pi_s} 
\,.
\]
Thus, having $\pi_s$-reversible $K=K_s$, strictly positive
$f=f_{s'}$ and $u=u_s \in \cC_0(V)$, upon combining 
\eqref{eq:cgz-conc2} and \eqref{eq:der:E_t-c},
or \eqref{eq:cgz-conc} and \eqref{eq:der:E_t-d},
we get \eqref{eq:der:E_t}
for both the continuous and discrete time dynamics. 
To complete the proof of the lemma, just confirm that \eqref{eq:chi-bd} 
holds for $\chi_c(\cdot) = \zeta(\cdot)$, 
$\delta_\star=1$ and some $c_1$ finite.
\end{proof}

%

\noindent
\emph{Proof of Theorem \ref{thm-main}(a):}
In case of lazy \abbr{dtrw} it suffices to consider $d(x,y) \le t-s$
where the bound of \eqref{ghku-pi} is merely the 
conclusion of Proposition \ref{prop:gauss-ubd} for $\psi(k)=C'/v(\sqrt{2k})$,
$\delta_\star=\infty$, 
the dynamic \eqref{def:k-mn} for $\{(K_{r+s},\pi_{r+s}): r \in (0,t-s] \}$ 
and $\nu_r=\pi_r$. Indeed, the required $1 \to \infty$ 
bound \eqref{eq:str-on-diagonal} is provided by 
Lemma \ref{P+VG-to-Nash}, whereas the $2 \to 2$ Gaffney bound
of \eqref{dfn:gaffney-bd} is proved for $a_t \equiv 0$
and $\rho(\cdot)=d(\cdot,x)$, 
in Lemma \ref{gaffney}. The same applies for the \abbr{csrw}, except 
that now $\delta_\star=1$ in the $2 \to 2$ Gaffney bound, hence also in \eqref{eq:gauss-ubd}. Nevertheless, in this case \eqref{eq:kxy-ubd} holds 
with $\chi(\theta) \le \frac{1}{4} \exp(4|\theta|)$, so considering 
$\theta = \frac{1}{4} \log( d(x,y)/(2 C_1 T) )$ for $T=(t-s)/2$
yields the stated bound \eqref{ghku-pi}.
\qed

Further use of the integral maximum principle as in \cite[Prop. 2.5]{CGZ} 
yields the following lemma that we shall use in the sequel to strengthen 
the preceding \abbr{ghku}.
\begin{lem}\label{lem:new-weight}
Let $I(r)=r^2$ for the dynamics \eqref{def:k-mn} 
and $I(\cdot)$ as in Theorem \ref{thm-main}(a) for the dynamics \eqref{def:k-st}.
Then, in the setting of Lemma \ref{gaffney}, for 
\begin{align}\label{new-weight}
f_s(x):=\exp\Big(-\eta (s+1) I\big(\rho(x)/(s+1)\big)\Big), 
 \qquad  x\in V,  s \in \Z_+,
\end{align}
the function $s\mapsto E_s(u)$ of \eqref{def:J_t} is non-decreasing  
provided $\inf_x \rho(x) \ge 1$, $u_t \in \cC_0(V)$, and
$\eta \in [0,c_2^{-1}]$ for some 
$c_2 (L_\rho r_0,\alpha)$ finite.
\end{lem}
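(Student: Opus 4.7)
The plan is to re-run the proof of Lemma \ref{gaffney} verbatim, replacing the simple exponential weight $w_{-2\theta}(x)e^{2\chi(\theta)s}$ by the new weight \eqref{new-weight}. As in that proof, the assumption that $s\mapsto\pi_s(x)$ is non-decreasing reduces the claim to showing the pointwise-in-time bound $\langle\partial_{-s}(f_s u_s^2)\rangle_{\pi_s}\le 0$ a.e.\ on $(0,t]$, for solutions $u$ of \eqref{dfn:sol-cyl} with zero boundary data. Expanding via the backward equation and applying Lemma \ref{gaffney-ineq}---\eqref{eq:cgz-conc2} in the continuous case (with $\alpha=1$), or \eqref{eq:cgz-conc} in the discrete case (with $\alpha=\alpha_l$)---bounds the cross term and reduces matters to the pointwise inequality
\begin{equation*}
\frac{1}{4\alpha}\,f_{s'}^{-1}(x)\bigl(K_s|\nabla f_{s'}|^2\bigr)(x) \;\le\; h_s(x), \qquad \forall\, x \in V,
\end{equation*}
where $s'=s$ and $h_s:=\partial_s f_s$ in continuous time, while $s'=s-1$ and $h_s:=f_s-f_{s-1}$ in discrete time.

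Writing $\phi_s:=-\eta\,\psi_s\!\circ\!\rho$ with $\psi_s(r):=(s+1)I(r/(s+1))$ (so $f_s=e^{\phi_s}$), the left side becomes $f_{s'}(x)\sum_y K_s(x,y)(e^{\phi_{s'}(y)-\phi_{s'}(x)}-1)^2$. The bounded-range assumption \eqref{bdd-range} and Lipschitz-$\rho$ give $|\rho(y)-\rho(x)|\le L_\rho r_0$ whenever $K_s(x,y)>0$, so a Taylor estimate combined with the elementary $(e^a-1)^2\le a^2 e^{2|a|}$ yields, for $\eta$ below an absolute constant,
\begin{equation*}
\frac{1}{4\alpha}\,f_{s'}^{-1}(x)\bigl(K_s|\nabla f_{s'}|^2\bigr)(x) \;\le\; C\,\eta^2(L_\rho r_0)^2\,\bigl(\psi_{s'}'(\rho(x))\bigr)^{2} f_{s'}(x).
\end{equation*}
On the other hand, $\partial_s f_s = \eta\,f_s\,\bigl(-\partial_s\psi_s(\rho)\bigr)$, and monotonicity of $s\mapsto\phi_s$ together with smoothness in $s$ makes the discrete difference $f_s-f_{s-1}$ comparable (via mean-value) to $\partial_s f_s$ at some intermediate time, so that $h_s(x)\ge c\,\eta f_{s'}(x)\bigl(-\partial_s\psi_s(\rho(x))\bigr)$ for a universal constant $c>0$.

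The crux is then the scaling inequality $\bigl(\psi_s'(r)\bigr)^2\le C_I\bigl(-\partial_s\psi_s(r)\bigr)$, to be checked uniformly for $r\ge 1$ and $s\ge 0$. For $I(r)=r^2$ (the discrete case and, in the csrw case, whenever $r\le s+1$) one has $\psi_s'(r)=2r/(s+1)$ and $-\partial_s\psi_s(r)=(r/(s+1))^2$, so the inequality holds as an identity with $C_I=4$. For the csrw regime $r>s+1$ where $I(r)=r(\log r+1)$, one computes $\psi_s'(r)=\log(r/(s+1))+2$ and $-\partial_s\psi_s(r)=r/(s+1)$, and the elementary bound $(\log t+2)^2\le C t$ for $t\ge 1$ concludes; continuity of both sides across $t=r/(s+1)=1$ glues the two regimes. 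Combining these estimates, for all $\eta\le 1/c_2$ with $c_2=c_2(L_\rho r_0,\alpha)$ finite the desired pointwise inequality holds, whence $\Delta E_s(u)\le 0$ and the monotonicity of $s\mapsto E_s(u)$ follows. The main obstacle is bookkeeping rather than conceptual: (i) controlling the gluing at $r/(s+1)=1$ in the csrw case so that $\psi_s'$ remains under uniform comparison with $(-\partial_s\psi_s)^{1/2}$, and (ii) replacing the time derivative $\partial_s f_s$ by the backward increment $f_s-f_{s-1}$ in the discrete setting without losing the quantitative control---both resolved by the monotonicity and convexity properties of $I$.
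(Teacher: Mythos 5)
Your overall plan mirrors the paper's: reduce to the pointwise inequality $\tfrac{1}{4\alpha}f_{s'}^{-1}(K_s|\nabla f_{s'}|^2)\le h_s$ via Lemma \ref{gaffney-ineq}, then compare the two sides using the calculus of $I$. Your derivative computations ($\psi_s'(r)=I'(r/(s+1))$, $-\partial_s\psi_s(r)=r I'(r)/(s+1)-I(r/(s+1))\cdot 1$, and the identity $(\psi_s')^2=4(-\partial_s\psi_s)$ for $I=r^2$, the bound $(\log t+2)^2\le Ct$ for $I=r(\log r+1)$) are all correct, and your discrete mean-value comparison $f_s-f_{s-1}\ge c\eta f_{s'}(-\partial_s\psi_s(\rho))$ is fine.

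However, there is a genuine gap in the first displayed estimate. You invoke $(e^a-1)^2\le a^2 e^{2|a|}$ with $a=\phi_{s'}(y)-\phi_{s'}(x)$, and then assert that ``for $\eta$ below an absolute constant'' one gets $\tfrac{1}{4\alpha}f_{s'}^{-1}(K_s|\nabla f_{s'}|^2)\le C\eta^2 (L_\rho r_0)^2(\psi_{s'}'(\rho))^2 f_{s'}$. This silently discards the factor $e^{2|a|}$, but $|a|\lesssim \eta L_\rho r_0\,\psi_{s'}'(\rho(x))$ is \emph{not} bounded: for the \abbr{dtrw} weight $I(r)=r^2$, $\psi_{s'}'(\rho(x))=2\rho(x)/(s'+1)$ can be arbitrarily large when $\rho(x)$ is large and $s'$ small, and no $\eta$-smallness, uniform in $(x,s)$, makes $e^{2|a|}$ a constant. (For \abbr{csrw}, $\psi_{s'}'$ grows only logarithmically so $e^{2|a|}$ grows polynomially in $\rho/(s+1)$; this is fixable, but not with the inequality as you have stated it.) In short, the displayed bound is false as written and the subsequent comparison with $h_s$ via your scaling inequality would need the dropped exponential to be accounted for.

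The paper's proof avoids this by never linearizing the exponential: it keeps exponentials on both sides. For the discrete case it bounds $(4\alpha)^{-1}(e^w-1)^2 \le e^{b(\alpha)w^2}-1$ and compares this to $h_s(x)=f_{s'}(x)\big(e^{\eta\rho(x)^2/((s'+1)(s+1))}-1\big)$, so the condition on $\eta$ is a linear inequality between the \emph{exponents}, namely $b(\alpha)c_3^2\eta^2\rho^2/(s'+1)^2\le\eta\rho^2/((s'+1)(s+1))$, valid uniformly in $x,s$ once $\eta$ is small. For the \abbr{csrw} case it reduces to showing $g(r):=\exp(\eta c_3 I'((1+c_3)r))-1-2\sqrt{\eta\, r(r\wedge1)}\le 0$, which is exactly the nonlinear version of your scaling inequality $(\psi_s')^2\lesssim (-\partial_s\psi_s)$: your inequality is the leading-order (small-$\eta$) content of $g\le 0$, but the proof needs the full exponential comparison $g'(r)\le 0$ to hold for \emph{all} $r\ge 0$, not just infinitesimally. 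So your instinct for what the final inequality should look like is right, but the path to it needs to retain the exponential gain on the $h_s$ side (discrete) or exploit the subexponential growth of $I'$ (continuous), exactly as the paper does.
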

\begin{proof} Following the proof of Lemma \ref{gaffney},
consider first the discrete dynamic \eqref{def:k-mn}. Then, by
\eqref{eq:cgz-conc} and \eqref{eq:der:E_t-d} 
it suffices to find $c_2<\infty$ such that for $c_2 \eta \in [0,1]$ and
$s'=s-1 \ge 0$ 
\begin{align}\label{new-space-deriv}
\frac{1}{4\alpha} f_{s'}^{-1} (K_s|\nabla f_{s'}|^2) \le h_s \,,
\end{align}
where for the strictly positive $f_s$ of \eqref{new-weight},
at $x \in V$,
\begin{align*}
h_s(x) :=  f_{s}(x) - f_{s'}(x) = f_{s'}(x) \big(e^{\frac{\eta \rho(x)^2}{(s'+1)(s+1)}}-1\big)\,. 
\end{align*}
Next, if $d(x,y)\le r_0$ then $|\rho(x)-\rho(y)|\le L_\rho r_0$ and
as $\rho(x) \ge 1$ also 
$|\rho(x)^2 -\rho(y)^2| \le c_3 \rho(x)$
for $c_3:=L_\rho r_0(2+L_\rho r_0)$ finite. In this case,  
the 
inequality $|e^w-1|\le e^{|w|}-1$ yields
\begin{align*}
|f_{s'}(y)-f_{s'}(x)|\le f_{s'}(x)\Big(e^{\frac{|\eta| |\rho(x)^2-\rho(y)^2|}{s'+1}}-1\Big)
\le f_{s'}(x)\Big(e^{\frac{c_3 |\eta| \rho(x)}{s'+1}}-1\Big)\,.
\end{align*}
Next, recall that for any $\alpha>0$ there exists $b(\alpha)$ finite,
such that $(4\alpha)^{-1}(e^w-1)^2\le e^{b(\alpha)w^2}-1$ 
for any $w \ge 0$. Thus, by 
assumption 
\eqref{bdd-range}, the \abbr{lhs} of \eqref{new-space-deriv} is 
bounded above by
\[
f_{s'}(x)\Big(e^{\frac{b(\alpha) c_3^2 \eta^2 \rho(x)^2}{(s'+1)^2}}-1\Big) 
\le h_s(x)\,,
\]
provided  
non-negative $\eta \le (2 b(\alpha) c_3^2)^{-1}$ is chosen in \eqref{new-weight}. Turning to the dynamic \eqref{def:k-st}, by \eqref{eq:cgz-conc2} and \eqref{eq:der:E_t-c} it similarly suffices to show that 
\[
\sup_{y \in \B(x,r_0)} |f_s(x)-f_s(y)|^2 \le 2 \partial_{s} f_s^2(x) \,,
\]
which for $f_s(\cdot)$ of \eqref{new-weight}, 
$c_3 = L_\rho r_0$ and an 
$L_\rho$-Lipshitz function $\rho(\cdot) \ge 1$, 
follows from 
\[
\sup_{\delta \in (0,r]} \;\; \sup_{|r'-r|\le c_3 \delta}
\Big\{ e^{\eta |I(r') -I(r)|/\delta}-1 \Big\}^2
\le 4 \eta [r I'(r) - I(r)] 
\]
(take $\delta^{-1} =(s+1)$, $r=\rho(x) \delta$ and $r'=\rho(y) \delta$). 
Further, with $r I'(r) - I(r) = r (r \wedge 1)$
and $I'(r)=2 (r \wedge 1) + (\log r)_+$ non-decreasing on $\R_+$, 
it suffices in turn to verify that 
\[
g(r) := \exp\big( \eta c_3 I'( (1+c_3) r)  \big) - 1 - 2 
\sqrt{ \eta r (r \wedge 1) } \le 0 \,, \qquad \forall r > 0 \,.
\]
To this end, note that $g(0)=0$ and it is not hard to check that 
$g'(r) \le 0$ whenever $2 \eta c_3 \le 1$ and
$\sqrt{\eta} c_3 (1+c_3) e 
\le 1$. That is, for any 
non-negative $\eta \le 1/c_2(c_3)$, as claimed.
\end{proof}
\end{section}

\begin{section}{Parabolic Harnack Inequality}\label{sect:harnack}
We adapt here Grigor'yan's approach \cite{Gr} to proving \abbr{PHI} to the case of continuous time heat equation \eqref{dfn:sol-cyl} on graphs associated with the \abbr{CSRW}, when $t\mapsto\pi_t(x)$ are non-decreasing and uniformly bounded.
Building on weighted Poincar\'{e} and $L^{2}$-mean-value inequalities,  
the crucial element of the proof is a \emph{first growth lemma} (here
Lemma \ref{first-gr}). Combining such first growth lemma with the uniform volume doubling condition,
one then derives the \emph{second growth lemma} (here Lemma \ref{second-gr}),
which yields the Harnack inequality by a quite intricate, but by now
classical, argument. We thus proceed with the weighted Poincar\'{e} inequality
of \cite[Prop. 2.2]{Del}.
\begin{ppn}\emph{[Weighted Poincar\'{e} inequality]}\label{weighted}
\newline
Suppose $\pi$ has \abbr{VD} property with constant $\Cd$ and
the Poincar\'{e} inequality with constant $\CP$ holds for 
uniformly elliptic, $\pi$-reversible, Markov transition 
$K(\cdot,\cdot)$ 
on $
E$.
Then, there exist $\CPw(\CP,\Cd,\alpha_e)$ finite,
such that for $\B:=\B (z,2r)$ 
and $\eta(\cdot):=\big\{[1 - d(\cdot,z)/(2r)]_+\big\}^2$,
\begin{align}\label{eq:w-poinc}
\CPw \, r^{2} \,  \langle (\eta_\wedge K) |\nabla f|^2 \rangle_\pi 
\ge \frac{\pi(\bH_f)}{\pi(\B)} \langle \eta f^2 \rangle_\pi 
\qquad \forall r>0, z \in V, f : V \to \R_+ \,
\end{align}
where $\bH_f := \B(z,r) \cap f^{-1}(\{0\})$, 
$\eta_\wedge(x,y) := \eta(x) \wedge \eta(y)$
and $(\eta_\wedge K) |\nabla f|^2$ is as in \eqref{dfn:K-grad-f}.
\end{ppn}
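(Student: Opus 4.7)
The plan is to adapt the Whitney-covering and chaining argument of \cite[Prop.~2.2]{Del}, carefully propagating the weight $\eta$ through the unweighted Poincar\'e inequality \eqref{poincare} and the uniform volume doubling condition. I will sketch three steps.

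\textbf{Step 1 (Whitney cover and local means).} For each $x\in \B(z,2r)$ set $\delta(x) := c_0(2r-d(x,z))$ with $c_0>0$ fixed so small that $\B(x,4\delta(x))\subset \B(z,2r)$ and $\eta$ varies over $\B(x,4\delta(x))$ by at most a factor of~$2$. A standard greedy extraction yields a countable cover $\{B_i=\B(x_i,\delta(x_i))\}$ of $\B(z,2r)$ with bounded multiplicity $\sum_i \I_{2B_i}\le N(\Cd)$, including a distinguished ball $B_0\ni z$ containing $\bH_f\subset \B(z,r)$ with $\pi(B_0)$ comparable to $\pi(\B)$ by \abbr{vd}. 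Applying \eqref{poincare} on each $B_i$ produces a local mean $c_i\in\R$ with
\begin{align*}
\sum_{B_i}(f-c_i)^2\pi \le \CP\,\delta(x_i)^2\sum_{x,y\in 2B_i}(f(x)-f(y))^2 K(x,y)\pi(x).
\end{align*}
Since $\eta_\wedge(x,y)$ is comparable to $\eta_i:=\eta(x_i)\asymp\delta(x_i)^2/r^2$ on every edge $(x,y)\subset 2B_i$, multiplying by $\eta_i$, summing over $i$, and using bounded overlap yields $\sum_i\sum_{B_i}\eta(f-c_i)^2\pi\le C_1\,r^2\,\langle(\eta_\wedge K)|\nabla f|^2\rangle_\pi$.

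\textbf{Step 2 (Chain means to $c_0$).} For each index $i$ pick a chain $B_i=B_{k_0},\ldots,B_{k_m}=B_0$ of pairwise overlapping Whitney balls, with $m=O(\log(r/\delta(x_i)))$. A classical telescoping computation (in the spirit of \cite[Lemma~3.5]{SC}) gives
\begin{align*}
|c_{k_{j+1}}-c_{k_j}|^2\,\pi(B_{k_j}\cap B_{k_{j+1}}) \le C(\CP,\alpha_e)\,\delta(x_{k_j})^2\,E_j(f),
\end{align*}
where $E_j(f)$ is the edge-Dirichlet form on $2B_{k_j}\cup 2B_{k_{j+1}}$; uniform ellipticity $\alpha_e$ enters to pass from vertex-scale bounds to the graph-edge Dirichlet form along the chain. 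Multiplying by $\eta_{k_j}\asymp\delta(x_{k_j})^2/r^2$, summing along the chain (the induced geometric series in $4^{-j}$ converges), and then summing over $i$ yields $\sum_i \eta_i\,\pi(B_i)\,(c_i-c_0)^2 \le C_2\, r^2\,\langle(\eta_\wedge K)|\nabla f|^2\rangle_\pi$.

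\textbf{Step 3 (Vanishing on $\bH_f$ and assembly).} Since $f\equiv 0$ on $\bH_f\subset B_0$, the local Poincar\'e on a ball slightly larger than $B_0$, combined with \abbr{vd}, gives
\begin{align*}
c_0^2\,\pi(\bH_f)\le \sum_{B_0}(f-c_0)^2\pi\le C_3\,r^2\,\langle(\eta_\wedge K)|\nabla f|^2\rangle_\pi,
\end{align*}
so that $c_0^2\,\langle\eta\rangle_\pi \le c_0^2\,\pi(\B) \le C_4\,(r^2\pi(\B)/\pi(\bH_f))\,\langle(\eta_\wedge K)|\nabla f|^2\rangle_\pi$. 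Combining the pointwise bound $\eta f^2\le 3\eta(f-c_i)^2 + 3\eta(c_i-c_0)^2 + 3\eta c_0^2$, summing over the Whitney cover, and absorbing the first two contributions into the factor $\pi(\B)/\pi(\bH_f)\ge 1$ then delivers \eqref{eq:w-poinc} with $\CPw=\CPw(\CP,\Cd,\alpha_e)$. The main technical obstacle is Step~2: one must control chain lengths simultaneously for all $i$, keep the accumulated weighted sum over geometrically decaying $\eta_{k_j}$ bounded uniformly, and ensure that each edge in $\B(z,2r)$ is used only boundedly often across all chains, so that the bounded-overlap estimate of Step~1 is preserved when edges (rather than vertices) are counted.
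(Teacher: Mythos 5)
The paper's own proof is far more economical: it simply quotes the weighted variance Poincar\'e inequality from \cite[Prop.~2.2]{Del},
\[
\tfrac{\CPw}{4}\,r^2\,\langle(\eta_\wedge K)|\nabla f|^2\rangle_\pi
\ \ge\ \langle\eta f^2\rangle_\pi - \frac{\langle\eta f\rangle_\pi^2}{\langle\eta\rangle_\pi}\,,
\]
and then passes to \eqref{eq:w-poinc} by one Cauchy--Schwarz applied to $f\,\mathbf 1_{\{f>0\}}$ together with the pointwise bounds $\eta\ge 1/4$ on $\B(z,r)$ and $\eta\le\mathbf 1_{\B}$. You instead set out to re-prove the Delmotte/Jerison weighted inequality itself via a Whitney cover and chaining, which is the standard method behind \cite[Prop.~2.2]{Del}; this is a legitimate, more self-contained route, but it is much longer than what the paper actually does and carries its own technical burden.

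However, Step~3 as written has a genuine inconsistency. A Whitney ball $B_0=\B(x_0,\delta(x_0))$ with $z\in B_0$ and $\delta(x)=c_0(2r-d(x,z))$ has radius at most $2c_0 r$. For the Whitney properties you require (namely $\B(x,4\delta(x))\subset\B(z,2r)$ and $\eta$ varying by at most a factor of $2$ over $4B_i$) you must take $c_0$ small, say $c_0\le 1/8$, forcing $\delta(x_0)\le r/4$. Such a $B_0$ cannot contain $\bH_f\subset\B(z,r)$, since $\bH_f$ may have points at distance $r$ from $z$. So either $B_0$ is Whitney-sized and misses $\bH_f$, or $B_0\supset\B(z,r)$ and then the Poincar\'e inequality \eqref{poincare} applied to it involves the Dirichlet form on $\B(z,2r)$, where $\eta_\wedge$ can be as small as order $r^{-2}$, destroying the needed $r^2$ scaling. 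The clean fix is precisely what the paper does: first prove (or cite) the variance form with the cover consisting only of genuine Whitney balls and the weighted mean $\bar f=\langle\eta f\rangle_\pi/\langle\eta\rangle_\pi$ as the anchor, then obtain the $\bH_f$ version a posteriori by Cauchy--Schwarz. Alternatively, one can keep your scheme but take $B_0=\B(z,r)$ outside the Whitney family, invoke the strong form of the Poincar\'e inequality (Dirichlet form on the same ball, which holds here by uniform ellipticity and \abbr{VD}, see \cite[Cor.~A.51]{Ba2}) so that $\eta_\wedge\ge 1/4$ on all edges used, and chain the Whitney means to the mean over $\B(z,r)$; as written, though, your argument does not establish \eqref{eq:w-poinc}.
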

\begin{proof} From \cite[Prop. 2.2]{Del} we have the weighted Poincar\'{e}
inequality 
\[
\frac{\CPw}{4} \, r^{2} \, \langle (\eta_\wedge K) |\nabla f|^2 \rangle_\pi 
\ge 
\langle \eta f^2 \rangle_\pi - \frac{\langle \eta f \rangle_\pi^2}
{\langle \eta \rangle_\pi} \ge 
\frac{\langle \eta {\bf 1}_{\{f=0\}} \rangle_\pi}{\langle \eta \rangle_\pi} 
\langle \eta f^2 \rangle_\pi \,,
\] 
where the right-inequality is merely 
Cauchy-Schwarz for $f {\bf 1}_{f>0}$. Since
the $[0,1]$-valued 
$\eta(\cdot)$ is 
supported on $\B$ and exceeds $1/4$ throughout $\B(z,r)$,
we arrive at \eqref{weighted}.
\end{proof}

The next ingredient is $L^{2}$-mean value inequality 
(denoted \abbr{ML$^2$}), analogous to the one in \cite[Sect. 4.1]{CG} for
uniformly lazy \abbr{DTRW} on time-invariant graph. 
To this end, recall 
first that a $\pi$-reversible
Markov transition $K(x,y)$ satisfies a relative Faber-Krahn (\abbr{FK})
inequality if there exist positive $a,\nu$ such that 
\begin{equation}
\lambda_{K}(\Omega)\ge\frac{a}{r^{2}}\left(\frac{\pi(\B)}{\pi(\Omega)}\right)^{\nu},
\qquad \forall r>0, \,z\in V, \, \Omega \subseteq \B, |\Omega| \ge 1 \,, 
\label{eq:FK}
\end{equation}
where $\lambda_{K}(\Omega)$ of \eqref{def-eig} is the smallest eigenvalue
of $I-K$ with Dirichlet boundary condition in $\Omega$. By \cite[Proposition 2.3]{CG},
the \abbr{FK} inequality (\ref{eq:FK}) follows from the \abbr{vd}
property and Poincar\'{e}  inequality, with constants $a,\nu$
that depend only on $\Cd$ and $\CP$. Proceeding to 
adapt the relevant part of 
\cite[Sect. 4]{CG} to our continuous time-varying setting, 
for Markov kernels $\{K_t\}$ of uniformly bounded range 
(as in \eqref{bdd-range}), we denote by
$\bpi(\cdot)$ the $\sigma$-finite 
measure on $[0,\infty) \times V$ such that  
\[
\bpi(S)= \int_0^{\infty} \big[ \sum_{x\in V}
\mathbf{1}_{\{(t,x)\in S\}} \pi_t(x) \big] \, dt 
\]
and call 
$u:Q \mapsto \R_+$ a \emph{super-solution}
(of the heat equation) on
$Q=Q(t_1,t_2;z,R)$ of \eqref{dfn:cyl} if 
\begin{equation}\label{dfn:sub-sol-cyl}
\partial_{-s} u(s,x) \ge \sum_{y} K_s(x,y) u(s,y) - u(s,x)\,,\quad 
\forall (s,x) \in Q \,,
\end{equation}
for some non-negative boundary values outside $Q$ (restricting 
to $s \in \N$ in discrete time, while for \abbr{csrw} the
inequality is between distributions and holds a.e.).
Similarly, 
$u \ge 0$ is called a \emph{sub-solution} on $Q$ 
when the reversed inequality \eqref{dfn:sub-sol-cyl} 
holds (see
\cite[Sec. 2.2]{Del}). 

\begin{remark}\label{rmk:sub-sol-cvx}
If $u \ge 0$ is a solution of \eqref{dfn:sol-cyl} on $Q$,
it must satisfy there \eqref{def:k-st} for \abbr{csrw} stopped upon exiting  
$\B(z,R)$. 
For any $\Phi(\cdot)$ convex, 
$v=\Phi(u)$ is then absolutely continuous on $Q$, and by Jensen's inequality
has \abbr{LHS} $\le$ \abbr{RHS} in \eqref{def:k-st} (throughout $Q$).
Taking $s \uparrow t$ we deduce that 
$v$ is a sub-solution on $Q$. Likewise, $v=\Phi(u)$ is
a super-solution on $Q$ whenever $\Phi(\cdot)$ is concave.
\end{remark}

\begin{ppn}\emph{[$L^2$-mean value inequality]}\label{csrw-ML1}
\newline
Suppose $t \mapsto \pi_t(x)$ is non-decreasing with $\CB:=\sup_{t,x}
\{\frac{\pi_t(x)}{\pi_0(x)}\}$ finite and the 
$\pi_t$-reversible, Markov operators $K_t$ satisfy
\eqref{bdd-range} and the relative \abbr{FK} inequality 
with same positive $a,\nu$.
Then, for $\vartheta(t):=\max\{t,t^{-1/\nu}\}$, some $C=C(a,\nu,\CB)<\infty$
any $T \ge 2 t \ge 4$, $R>r_0$, $z \in V$ and 
sub-solution $u(\cdot,\cdot)$ on $Q:=Q(T-2t,T;z,R)$ of \eqref{dfn:sub-sol-cyl}, 
\begin{align}
\hbox{\abbr{ML}}^2: 
\quad\quad u^2(T-t,z)\le \frac{C\vartheta(t/R^{2})}{\bpi(Q)}\int_{Q}u^2 d\bpi
\,.
\label{ML2}
\end{align}
\end{ppn}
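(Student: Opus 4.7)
The approach is a Moser iteration, adapting the program of \cite[Sect. 4]{CG} to the time-varying setting. The two structural ingredients are the relative Faber--Krahn inequality \eqref{eq:FK} (which provides an $L^2 \to L^{2\alpha}$ smoothing effect at each time slice) and a Caccioppoli-type reverse-Poincar\'e energy estimate for non-negative sub-solutions. The uniform bound $\CB := \sup_{t,x}\{\pi_t(x)/\pi_0(x)\}$ allows all the $\pi_s$-weighted norms on a cylinder to be compared up to a universal constant, so the time-varying nature of $\pi_s$ enters the argument only through this factor in the final constant $C$.

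First I would derive the Caccioppoli inequality. For a non-negative sub-solution $u$ on $Q$ and a product cutoff $\eta(s,x) = \phi(s)\psi(x)$ with compact support in $Q$, multiply \eqref{dfn:sub-sol-cyl} by $2\eta^2 u \pi_s$, sum over $x$ and integrate over $s \in [T-2t,T]$. The spatial part, via the identity for $\pi_s$-reversible operators used in the proof of Lemma \ref{gaffney-ineq} (with $\alpha = 1/2$ in \eqref{eq:cgz-thm22}) together with the symmetrization $\langle \eta^2(x) K_s |\nabla u|^2 \rangle_{\pi_s} \ge \langle (\eta_\wedge^2 K_s) |\nabla u|^2\rangle_{\pi_s}$, gives
\begin{align*}
2\langle \eta^2 u (I-K_s) u \rangle_{\pi_s}
\ge \tfrac12 \langle (\eta_\wedge^2 K_s) |\nabla u|^2 \rangle_{\pi_s}
- C_0 \langle u^2 \psi^{-2} (K_s |\nabla \psi|^2) \rangle_{\pi_s},
\end{align*}
with $C_0 = C_0(r_0)$ finite via \eqref{bdd-range}. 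The temporal part $-\langle \eta^2 \partial_s u^2\rangle_{\pi_s}$ is rearranged distributionally, and the assumption $\partial_s \pi_s \ge 0$ places the extra contribution $\langle u^2 \eta^2 \partial_s \pi_s\rangle$ on the favorable side, while $\CB$ bounds the surviving boundary terms. The outcome is
\begin{align*}
\sup_s \langle \eta^2 u^2 \rangle_{\pi_s}
+ \int_{T-2t}^T \langle (\eta_\wedge^2 K_s) |\nabla u|^2 \rangle_{\pi_s}\, ds
\le C_1 \int_Q \big(\psi^{-2} (K_s |\nabla \psi|^2) + |\partial_s \phi^2|\big) u^2 \, d\bpi,
\end{align*}
with $C_1 = C_1(\CB, r_0)$ finite.

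Second, I would combine this energy estimate with \eqref{eq:FK} applied slice-by-slice. Interpolating $L^{2\alpha}$ between the slice-wise $L^2$-bound (supplied by Faber--Krahn applied to $\eta u$ and the Dirichlet energy) and the $L^\infty$-in-$s$-of-$L^2$-in-$x$ bound (supplied by the $\sup_s$ term above), one obtains, for concentric cylinders $Q' \subset Q$ with spatial gap $\rho$ and temporal gap $\sigma$, and $\alpha := 1 + 1/\nu$,
\begin{align*}
\bigg(\frac{1}{\bpi(Q')} \int_{Q'} u^{2\alpha} d\bpi\bigg)^{1/\alpha}
\le \frac{C_2\,(\rho^{-2} + \sigma^{-1})(\sigma \vee \rho^2)}{\bpi(Q)}
\int_Q u^2 \, d\bpi,
\end{align*}
with $C_2 = C_2(a,\nu,\CB)$. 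Iterating this $L^p$-improvement over a geometric sequence of cylinders $Q_k$ shrinking to $\{(T-t,z)\}$ in the standard Moser fashion, the telescoping product of constants converges (since $\sum \alpha^{-k}<\infty$) and the bootstrap yields $u^2(T-t,z) \le C\,\vartheta(t/R^2)\,\bpi(Q)^{-1}\int_Q u^2 d\bpi$ in the limit. The two branches of $\vartheta(\tau) = \max\{\tau, \tau^{-1/\nu}\}$ reflect the two parabolic scalings for the choice of $\rho_k, \sigma_k$: when $t \le R^2$ the cylinders contract proportionally and the $(R^2/t)^{1/\nu}$ factor emerges through the Faber--Krahn exponent; when $t \ge R^2$ they first contract in space down to radius $R$ and the residual time factor $t/R^2$ survives.

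The main obstacle is the rigorous handling of $\partial_s \pi_s$ in the Caccioppoli step, since $s \mapsto \pi_s(x)$ is only assumed non-decreasing, hence a priori only of bounded variation in $s$. I would resolve this by mollifying $\pi_s$ in $s$, performing the estimate in the smooth setting where $\partial_s \pi_s$ is a non-negative function, and passing to the limit using $\CB$ to keep all estimates uniform. A secondary subtlety is that cutoffs couple to the nonlocal generator $K_s$ only through the ``min-cutoff'' $\eta_\wedge$, but this is precisely the form in which the weighted Poincar\'e inequality of Proposition \ref{weighted} is stated, so it is perfectly compatible with the iteration and no separate argument is required to convert between pointwise and edge cutoffs.
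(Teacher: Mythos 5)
The Caccioppoli step you sketch, and the observation that the monotonicity of $s\mapsto\pi_s$ makes the $\partial_s\pi_s$ contribution favorable while $\CB$ absorbs the remaining comparisons, line up with what the paper actually does (the analogue of \cite[Eq.~(4.15)]{CG} is derived the same way). The genuine divergence is in the iteration: you propose a Moser-type $L^p$-bootstrap, iterating an $L^2\to L^{2\alpha}$ improvement over shrinking cylinders, whereas the paper follows Coulhon--Grigor'yan's De~Giorgi-type scheme, comparing the quantities $I_n=\int_{\Psi_n}(u-\theta_n)_+^2\,d\bpi$ over nested cylinders with \emph{increasing truncation levels} $\theta_n$ and invoking the Faber--Krahn bound directly on each truncated function.

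The gap in your version is the claimed ``$L^2\to L^{2\alpha}$ smoothing at each slice supplied by Faber--Krahn.'' The relative Faber--Krahn inequality \eqref{eq:FK} is a lower bound on the bottom Dirichlet eigenvalue $\lambda_K(\Omega)$, and it only produces a gain when applied to a function whose \emph{support} $\Omega$ has small measure compared with the reference ball. In a Moser $L^p$-bootstrap, the function you feed in is $\eta u$ (or $\eta u^{\alpha^k}$), whose support is the full cutoff region $\B(z,R_k)$ --- not a small set. So \eqref{eq:FK} does not, as stated, deliver a slice-wise $L^{2\alpha}$ bound for $\eta u$. To make your route rigorous you would first have to upgrade the Faber--Krahn inequality to a localized Sobolev inequality of exponent $2\alpha$ with $\alpha=1+1/\nu$. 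That conversion is known on manifolds, but its proof is itself a truncation/level-set argument (essentially Nash's), and carrying it out in the discrete, nonlocal, time-varying setting would be additional nontrivial work not present in the proposal. The paper sidesteps this entirely: in the De~Giorgi iteration, the measure of the sub-level set $\{u>\theta_{n-1}\}\cap\Psi_n$ is \emph{controlled inductively} to be small, which is exactly what lets \eqref{eq:FK} bite at each stage; the small-support requirement is built into the mechanism rather than being something one must first remove by proving Sobolev. If you wish to save the Moser route, you would also need to verify that $u^{\alpha^k}$ inherits the sub-solution property (it does, since $r\mapsto r^p$ is convex and non-decreasing for $p\ge1$ and $u\ge0$; see the logic behind Remark~\ref{rmk:sub-sol-cvx}), but that is a secondary point relative to the Faber--Krahn issue above.
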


\begin{proof}
We follow closely the argument
in \cite[Sect. 4]{CG}, starting with the analogue 
of \cite[Corollary 4.7]{CG}. To this end, for any functions $u,g$ on $V$, 
\begin{align*}
|\nabla (gu) |^2 - (\nabla g^2 u) (\nabla u) = u(x)u(y) |\nabla g|^2
\le \frac{1}{2} u^2(x) |\nabla g|^2 + \frac{1}{2} u^2(y) |\nabla g|^2 \,.
\end{align*}
Hence, for any $\pi$-reversible operator $K(x,y)$ on $V$ and $g \in \cC_0(V)$,
\begin{equation}\label{CG-4.8}
2 \cE_{K,\pi}(gu,gu)
+ 2 \langle g^2 u (K u - u) \rangle_\pi 
\le \langle u^2 K |\nabla g|^2 \rangle_\pi 
\end{equation}
(recall \eqref{dirichlet} and the \abbr{lhs} of \eqref{eq:28-cgz}).
Fix any $\eta(s,x)$ supported on finite time-space 
region $[T-2t,T] \times \Omega$ with  
$\eta(T,\cdot) \equiv 0$ and 
$\|(\nabla \eta)^2\|_\infty + \|\partial_s\eta^2\|_\infty \le M$.
Since $s\mapsto\pi_s(x)$ are non-decreasing, from
\eqref{CG-4.8} for $g=\eta_s=\eta(s,\cdot)$ differentiable in $s$, 
any
sub-solution $u_s=u(s,\cdot)$
and the $\pi_s$-reversible $K_s$, we get that 
at a.e. $s \in [T-2t,T]$,
\begin{align*}
& 
 2 \cE_{K_s,\pi_s}(\eta_s u_s,\eta_s u_s) 
+ \partial_{-s}\langle \eta_s^2u_s^2\rangle_{\pi_s}
\le
\langle u_s^2K_s|\nabla\eta_s|^2\rangle_{\pi_s}+\langle u_s^2\partial_{-s}\eta_s^2\rangle_{\pi_s}.
\end{align*}
Integrating both sides over $[T-\tau,T]$ yields the analogue of \cite[Eq. (4.15)]{CG}.
That is, for $\wt{\Omega} = \{ z \in V : d(z,\Omega) \le r_0 \}$ and 
any $\tau \le 2t$,
\begin{equation}\label{eq:CG-4.15}
\|\eta u\|^2_{L^2(\pi_{T-\tau})} 
+ 2 \int_{T-\tau}^{T} \cE_{K_{s},\pi_{s}}(\eta u,\eta u) \, ds
\le 2 M \int_{[T-\tau,T] \times \wt{\Omega}} \, u^2 d\bpi \,. 
\end{equation}
We proceed to adapt the proof from \cite[Sect. 4.4 \& 4.5]{CG} 
of the \abbr{ML$^2$}. Indeed, by the assumed monotonicity of $s\mapsto \pi_s$ and \emph{uniformity} 
of $a$, $\nu$, here the relative Faber-Krahn inequality \eqref{eq:FK}
yields that for any $s \ge0$, $z \in V$, $r>0$ and 
non-empty $\Omega \subseteq \B(z,r)$,
\begin{equation}\label{eq:CG-4.19}
\lambda_{K_s}(\Omega)\ge \Lambda(\pi_s(\Omega)), \qquad 
\Lambda(\xi):=\frac{a}{r^2}\pi_0(\B(z,r))^{\nu}\xi^{-\nu} \,.
\end{equation}
With \eqref{eq:CG-4.15} and \eqref{eq:CG-4.19} 
taking the roles of \cite[Eq. (4.15)]{CG} and \cite[Eq (4.19)]{CG}, respectively,
the proof of \cite[Eq. (4.20)]{CG} 
applies verbatim, upon changing on \cite[page 681]{CG} to 
$I:=\int_\Psi u^2  d\bpi$, $I':=\int_{\Psi'}
\left(u-\theta\right)_+^2 \,d \bpi$,
for a
solution $u(\cdot,\cdot)$, constant $\theta>0$ 
and invoking hereafter the time inversion 
$s \mapsto (T-s)$ on $\Psi' \subseteq \Psi$ and 
all other time-space cylinders from \cite{CG}.
We proceed as in 
\cite[pages 685-687]{CG} to compare via \cite[Eq. (4.20)]{CG} the values 
of $I_{n-1}$ and $I_n:=\int_{\Psi_n}\left(u-\theta_n\right)_+^2d\bpi$,
for $\theta_n=\theta (2-2^{-n})$ and
decreasing cylinders $\Psi_n:=Q(T-2t+n,T-t+t_n;z,R_n)$, 
with $t_n=t 2^{-n}$ and $R_n=\lceil R_{n-1}/2 \rceil$, 
starting at $\Psi_0=Q$.
Iterating 
to $N=\max\{ n : R_n \ge r_0 + 1, t_n \ge 2\}$, we arrive at \cite[Eq. (4.36)]{CG} where
$\beta := \Lambda(1) \le \pi_0(z)^\nu$ by \eqref{eq:FK} for 
$\Omega=\{z\}$ (as $\lambda_{K_0}(\{z\})\le 1$). Setting $M=2$,
$\hat{\tau}=2$ and $\hat{T}-\hat{\tau}=T-t$, consider 
\eqref{eq:CG-4.15} for the sub-solution 
$\wh{u}:=(u-2\theta_N)_+$ (recall Remark \ref{rmk:sub-sol-cvx}),
and $[0,1]$-valued 
$\wh{\eta}$ supported on $[\hat{T}-2\hat{\tau},\hat{T}) \times \Omega$,
such that $\wh{\eta}(T-t,z)=1$ and 
$\|(\nabla \hat{\eta})^2\|_\infty + \|\partial_s\hat{\eta}^2\|_\infty \le M$.
Since $\wt{Q} = [\hat{T}-\hat{\tau},\hat{T}] \times \widetilde{\Omega} 
\subset \Psi_N$ and $\theta_N \le 2 \theta$, we have that 
\begin{align}\label{eq:singleton}
(u(T-t,z)-2\theta)_+^2 \pi_0(z) \le 
 2 M \int_{\wt{Q}} \hat{u}^2 d \bpi \le
2 M I_N \,.
\end{align}
Continuing as in \cite{CG}, we cancel the common power of 
$m(z)=\pi_0(z)$ 
from both sides of \cite[Eq. (4.38)]{CG}, en-route to \cite[Eq. (4.39)]{CG} 
and thereby to \abbr{ML$^2$} by taking $\theta=\frac{1}{3} u(T-t,z)$.     
\end{proof}

\medskip
Having the key ingredients of Prop.
\ref{weighted} and Prop. \ref{csrw-ML1}, 
we now establish the first growth lemma.

\begin{lem}\emph{[First growth lemma]}\label{first-gr}
Suppose $\{\blG_t\}$ are as in Theorem \ref{harnack}.\\
For any $\delta>0$ there exists $\varepsilon=\varepsilon(\delta,\CP,\Cd, \alpha_e,\CB)>0$ such that  
for all $T \ge 6R^2$, $z \in V$ and any positive solution  
$u(\cdot,\cdot)$ of \eqref{dfn:sol-cyl} on  
$Q:=Q\left(T-4R^{2},T;z,2R\right)$, 
\begin{equation}\label{eq:H-large-measure}
\frac{\bpi (Q(T-R^{2},T;z,R) \cap u^{-1}([1,\infty)))}{\bpi(Q(T-R^{2},T;z,R))} \ge \delta
\quad \Longrightarrow \quad
\inf_{Q(T-3R^{2},T-2R^{2};z,R)} u \ge \varepsilon \,.
\end{equation}
\end{lem}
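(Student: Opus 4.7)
The plan is to follow the Grigor'yan-Saloff-Coste strategy: via a logarithmic transformation, convert the lower bound on the measure of $\{u\ge 1\}$ in $Q_*:=Q(T-R^2,T;z,R)$ into a pointwise upper bound on a non-negative sub-solution, by combining the $L^2$-mean value inequality (Proposition \ref{csrw-ML1}) with an energy estimate coupling the Caccioppoli identity \eqref{eq:CG-4.15} to the weighted Poincar\'e inequality (Proposition \ref{weighted}).

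Set $v := (\log(1/u))_+$. The function $\Phi(y) := (\log(1/y))_+$ is convex on $(0,\infty)$ (being $0$ on $[1,\infty)$ and the convex $-\log y$ on $(0,1]$, with $\Phi'(1-) = -1 \le 0 = \Phi'(1+)$); since $u > 0$ solves \eqref{dfn:sol-cyl} on $Q := Q(T-4R^2, T; z, 2R)$, Remark \ref{rmk:sub-sol-cvx} ensures that $v = \Phi(u)$ is a non-negative sub-solution on $Q$. The hypothesis of \eqref{eq:H-large-measure} becomes $v \equiv 0$ on a subset of $Q_*$ of $\bpi$-measure at least $\delta \, \bpi(Q_*)$, while the conclusion is equivalent to the upper bound $v \le \log(1/\varepsilon)$ throughout $Q_0:=Q(T-3R^2, T-2R^2; z, R)$. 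For each $(s_0, x_0)\in Q_0$, the inner cylinder $Q':=Q(s_0 - R^2, s_0 + R^2; x_0, R)$ satisfies $Q'\subseteq Q$ since $T\ge 6R^2$, and Proposition \ref{csrw-ML1} applied to the sub-solution $v$ with $t = R^2$ (so $\vartheta(t/R^2)=1$) yields
\begin{equation*}
v^2(s_0, x_0) \le \frac{C}{\bpi(Q')}\int_{Q'} v^2\, d\bpi \,.
\end{equation*}
The task thus reduces to finding $C'=C'(\delta,\CP,\Cd,\alpha_e,\CB)$ finite with $\int_{Q'} v^2\, d\bpi \le C'\bpi(Q')$ uniformly in $(s_0,x_0)\in Q_0$; the claim then follows with $\varepsilon:=e^{-\sqrt{CC'}}$.

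To establish this uniform $L^2$-bound on $v$, I will couple a Caccioppoli-type estimate for the sub-solution $v$ (derived exactly as in \eqref{eq:CG-4.15}) with the weighted Poincar\'e inequality applied slice-by-slice. With a space-time cutoff $\eta(s,x) = \eta_{\mathrm{sp}}(x)\eta_{\mathrm{tm}}(s)$, where $\eta_{\mathrm{sp}}$ is the weight from Proposition \ref{weighted} associated with $\B(z,2R)$ and $\eta_{\mathrm{tm}}$ is supported on $[s_0-R^2,T]$ with $\eta_{\mathrm{tm}}(T)=0$, the Caccioppoli inequality reads
\begin{equation*}
\|\eta v(s)\|_{L^2(\pi_s)}^2 + 2\int_s^T \mathcal{E}_{K_r,\pi_r}(\eta v,\eta v)\,dr \le 2M \int_{[s,T]\times\B(z,2R)} v^2\, d\bpi \,, \qquad s \ge s_0 - R^2 \,.
\end{equation*}
By Fubini, the hypothesis forces the $\pi_s$-measure of $\{x\in\B(z,R):v(s,x)=0\}$ to be at least $c\delta\,\pi_s(\B(z,R))$ on a set of $s\in[T-R^2,T]$ of positive Lebesgue measure; on such slices Proposition \ref{weighted} bounds $\langle \eta_{\mathrm{sp}} v^2\rangle_{\pi_s}$ by $(CR^2/\delta)\,\langle (\eta_{\mathrm{sp},\wedge}K_s)|\nabla v|^2\rangle_{\pi_s}$. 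Combined with uniform ellipticity, this absorbs the spatial $L^2$-mass of $v$ into the energy, and iterating the Caccioppoli bound backward in time closes the estimate to yield $\int_{Q'} v^2\,d\bpi \le C'\bpi(Q')$.

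The main obstacle is that $v$ may be a priori unbounded wherever $u$ is very small, so the right-hand side of the Caccioppoli inequality is not automatically finite. I will bypass this by first running the argument with the truncated sub-solutions $v_\sigma := (\log(1/(u+\sigma)))_+$ for small $\sigma>0$: each $v_\sigma$ is bounded by $\log(1/\sigma)$ and remains a non-negative sub-solution on $Q$ via Remark \ref{rmk:sub-sol-cvx}, since $y \mapsto (\log(1/(y+\sigma)))_+$ is convex on $(0,\infty)$ (indeed $-\log(\cdot+\sigma)$ has positive second derivative, and its positive part inherits convexity). Because the resulting bounds are independent of $\sigma$, monotone convergence as $\sigma\downarrow 0$ recovers the desired estimate for $v$. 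The assumption $\CB<\infty$ together with the monotonicity of $t\mapsto \pi_t(x)$ keeps all constants in the weighted Poincar\'e and Caccioppoli estimates uniform in time, producing $\varepsilon=\varepsilon(\delta,\CP,\Cd,\alpha_e,\CB)>0$ as claimed.
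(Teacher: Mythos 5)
Your overall architecture is right as far as it goes: working with the logarithm $v=(\log(1/u))_+$ (same function as the paper's $-\log(1\wedge u)$), identifying it as a sub-solution via convexity and Remark \ref{rmk:sub-sol-cvx}, reducing the pointwise bound to an $L^2$-bound on $v$ via the $L^2$-mean-value inequality of Proposition \ref{csrw-ML1}, and handling the a-priori unboundedness of $v$ by a truncation (your $v_\sigma$ does the same job as the paper's replacement of $u$ by $(1-b)u+b$, which is sanctioned by Remark \ref{rmk:str-pos}). However, the step where you claim to obtain $\int_{Q'}v^2\,d\bpi\le C'\bpi(Q')$ is the crux of the whole lemma, and as written it has a real gap.

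The trouble is that the weighted Poincar\'e inequality \eqref{eq:w-poinc} is only useful on those time slices $s$ where the zero set $\bH_{v_s}$ occupies a definite fraction of $\B(z,R)$, and the hypothesis of \eqref{eq:H-large-measure} guarantees this only on a set of times of positive Lebesgue measure inside $[T-R^2,T]$ --- not on all slices, and certainly not on the slices inside $Q'\subseteq[T-4R^2,T-R^2]\times\B(z,2R)$ where you actually need the $L^2$-bound. On the ``bad'' slices Proposition \ref{weighted} gives nothing, so the Caccioppoli inequality \eqref{eq:CG-4.15}, whose right-hand side is again a space-time $L^2$-norm of $v$ over those same slices, cannot be closed by ``absorbing the spatial $L^2$-mass into the energy''; you end up bounding $\int v^2\,d\bpi$ by itself plus an uncontrolled remainder. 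The paper circumvents exactly this obstruction by Grigor'yan's device: it works with the weighted $L^1$-quantity $F(s)=\langle\eta v_s\rangle_{\pi_s}$ rather than the $L^2$-norm, combines the derivative estimate \eqref{def:I-t}--\eqref{delm-diff} with the weighted Poincar\'e inequality \emph{and a Cauchy--Schwarz step} to arrive at the Riccati inequality $\partial_s F\ge L(s)F^2-D(s)$, and then integrates this nonlinear ODE on $[T-R^2,T]$. The nonlinearity $F^2$ is what lets the \emph{integral} lower bound $\int_{T-R^2}^T L(s)\,ds\gtrsim\delta$ --- which is precisely what the hypothesis gives you --- produce a pointwise bound $F(t)\lesssim\pi_0(\B)/\delta$ for \emph{every} $t\le T-R^2$. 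Only afterwards does the paper feed this into \eqref{delm-diff}, then the strong Poincar\'e inequality \eqref{poinc-zeta}, and finally the $L^2$-mean-value inequality, in that order. Without the $F$-ODE step there is no mechanism in your proposal to transfer the measure-theoretic information on the level set $\{u\ge 1\}$ from the time slab $[T-R^2,T]$ to a uniform $L^2$-bound on $v$ over $Q'$, and ``iterating Caccioppoli backward in time'' does not supply one.

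Concretely, to repair the proof you should: (i) insert the Riccati argument for $F(s)=\langle\eta v_s\rangle_{\pi_s}$ (cf. \cite[pg.\ 67]{Gr}), deducing $F(t)\le C_1\pi_0(\B)\delta^{-1}$ on $[T-4R^2,T-R^2]$; (ii) integrate the derivative inequality to obtain a bound on $\int_{I_t}\langle(\eta_\wedge K_s)|\nabla v_s|^2\rangle_{\pi_s}\,ds$; (iii) use the strong Poincar\'e inequality (no longer the one with the zero-set weight) together with the $F$-bound to get $\int_{I_t\times\B'}v^2\,d\bpi\le C_3R^2\pi_0(\B)\delta^{-2}$; and only then (iv) apply Proposition \ref{csrw-ML1}. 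Steps (i)--(iii) are the missing core.
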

\begin{proof} Fixing $z \in V$ and $T \ge 4 R^2$ set $\B:=\B(z,2R)$ and
$\eta(\cdot):=\big\{[1-d(\cdot,z)/(2R)]_+\big\}^{2}$ as in Prop. \ref{weighted}. Recall Remark \ref{rmk:str-pos} that  
any solution $u>0$ of \eqref{dfn:sol-cyl} on $Q$ can be replaced by
bounded away from zero solutions $u_b = (1-b) u + b$,
without altering the \abbr{lhs} of \eqref{eq:H-large-measure}. Hence, by Remark \ref{rmk:sub-sol-cvx},
\abbr{wlog} we have the associated  
super-solution $\wt{u} := 1 \wedge u \ge b$ for some $b>0$,
and 
uniformly bounded sub-solution $v := - \log \wt{u}$.
Consider the functions 
$F(s):=\langle \eta v_s \rangle_{\pi_s}$ on $[T-4R^2,T]$, 
$\wt{u}_s:= \wt{u}(s,\cdot)$ and $v_s:=v(s,\cdot)$
on $V$ and the subset $\bH_{v_s}=\B(z,R) \cap u(s,\cdot)^{-1}([1,\infty))$ 
of $\B$. Having $s\mapsto\pi_s(\cdot)$ non-decreasing and
$\wt{u} \ge b$ a super-solution, 
it follows from \eqref{dfn:sub-sol-cyl} and the \abbr{lhs} of \eqref{eq:28-cgz}
that as distributions, for a.e. $s$,
\begin{align}\label{def:I-t}
\partial_s F(s)&\ge\langle\frac{\eta}{\wt{u}_s}\partial_{-s}\wt{u}_s\rangle_{\pi_s} \ge \langle \frac{\eta}{\wt{u}_s} K_s \nabla \wt{u}_s 
\rangle_{\pi_s} =
-\frac{1}{2}\langle K_s(\nabla(\frac{\eta}{\wt{u}_s})\nabla\wt{u}_s)\rangle_{\pi_s}.
\end{align} 
Setting $\psi(c)=\frac{1}{2c}$ for $c>0$ and $\psi(0)=1$, recall 
that for any $a,b>0$ and $c,d \ge 0$, 
\begin{align*}
-(\frac{d}{b}-\frac{c}{a})(b-a)\ge\frac{1}{2}(c\wedge d)(\log b-\log a)^2
- |d-c| \psi\Big(\frac{c \wedge d}{|d-c|}\Big) 
\end{align*}
(see \cite[Inequality (1.23)]{SZh}).
For $(x,y) \in E$ and $k \in \Z_+$, 
if $\eta_\wedge = k^2 (2R)^{-2}$ then necessarily 
$|\nabla \eta| \le (2k+1) (2R)^{-2}$,
so $|\nabla \eta| \psi(\eta_\wedge/|\nabla \eta|) \le 
\frac{9}{8} R^{-2}$ (or zero, whenever $d(z,x) \vee d(z,y) > 2R$). Hence, 
upon summing over $\pi_s K_s$ we get as in \cite[proof of (5.7)]{Ba}, that
\begin{align}
- \langle K_s(\nabla(\frac{\eta}{\wt{u}_s})\nabla\wt{u}_s)\rangle_{\pi_s} 
\ge \frac{1}{2}
\langle (\eta_{\wedge} K_s) |\nabla v_s|^{2} \rangle_{\pi_s} -
 \frac{9}{4} R^{-2} \pi_s(\B) \,.   \label{delm-diff}
\end{align}
Next, by Prop. \ref{weighted} for
$ 
v_s \ge 0$ and $r=R$, followed by Cauchy-Schwartz, 
\begin{align*}
 \CPw R^{2}
 \langle (\eta_{\wedge} K_s) |\nabla v_s|^{2} \rangle_{\pi_s}
 & \ge\frac{\pi_s(\bH_{v_s})}{\pi_s(\B)}\langle \eta v_s^2 \rangle_{\pi_s}
 \;\ge\frac{\pi_s(\bH_{v_s})}{\pi_s(\B)^{2}}F(s)^2.
\end{align*}
Plugging this into \eqref{def:I-t}-\eqref{delm-diff} yields 
\begin{align*}
\partial_s F(s)\ge L(s)F(s)^2-D(s)\,, \qquad
L(s):=\frac{R^{-2} \pi_s(\bH_{v_s})}{
\CPw (2 \CB)^2 \pi_0(\B)^{2}},  \qquad     D(s):= \frac{9}{8} R^{-2}\pi_s(\B)\,.
\end{align*}
Following \cite[pg. 67]{Gr}, let $J(t) = F(t)-\int^T_t D(s) ds$ 
and $t_\star := \sup \{ t \le T : J(t) \le 0 \}$. With 
$F$, $L$, $D$ non-negative and $J(T)
\ge 0$, we have on $[t_\star,T]$
that $\partial_s J \ge L F^2 \ge L J^2$ 
and consequently  
$J(t) \le (\int_{t}^T L(s) ds)^{-1}$. Further 
$t \mapsto J(t)$ is non-decreasing and $J(t_\star)=0$,
so this bound extends to all $t$. Thus, on 
$[T-4R^{2},T-R^{2}]$,
\begin{align}\label{eq:bd-I-t}
F(t)
\le 
\Big(\int_{T-R^2}^T L(s)ds\Big)^{-1}+\int_t^T D(s) ds \,.
\end{align}
From the \abbr{LHS} of \eqref{eq:H-large-measure} 
and definition of $\bH_{v_s}$ we have 
$R^{-2} \int_{T-R^2}^T \pi_s(\bH_{v_s}) ds \ge \delta \Cd^{-1} \pi_0(\B)$,
hence the first term on the \abbr{rhs} of \eqref{eq:bd-I-t} is at most
$\delta^{-1} \Cd \CPw (2 \CB)^2 \pi_0(\B)$. The other term 
is at most $7 \CB \pi_0(\B)$, so   
for some $C_1(\CPw,\Cd,\CB)$ finite,  
\begin{align}\label{ubd:I}
F(t)\le C_1 \pi_{0}(\B) \delta^{-1}\,, \qquad \qquad
\forall \, t\in[T-4R^2,T-R^2] \,.
\end{align}
For $t\in [T-3R^2,T-2R^2]$, integrating \eqref{def:I-t} 
on $I_t := [t-R^2,t+R^2]$, yields by \eqref{delm-diff} and \eqref{ubd:I} that 
\begin{align}
\int_{I_t}
\langle (\eta_{\wedge} K_s) |\nabla v_s|^{2} \rangle_{\pi_s} ds   
&\le 4 F(t+R^{2})
+ \frac{9}{2} R^{-2} \bpi(I_t \times \B)
\le C_2\pi_{0}(\B)\delta^{-1} \,,
\label{integ-wpi}
\end{align}
where $C_2= 4 C_1 + 9 \CB$. Since 
$\zeta:={\bf 1}_{\B'} \le 16 \eta$ for $\B':=\B(z,3R/2)$, 
it follows that  
\begin{align*}
\bar{v}_s:=
\frac{\langle v_s \zeta \rangle_{\pi_s}}{\langle \zeta \rangle_{\pi_s}} 
\le \frac{16 F(s)}{\pi_0(\B')} \,,
\qquad
\cE_{K_s,\pi_s} (v_s\zeta,v_s\zeta) \le 8
\langle (\eta_{\wedge} K_s) |\nabla v_s|^{2} \rangle_{\pi_s} \,.
\end{align*}
Recall that under uniform ellipticity, the \abbr{VD} property and (weak) Poincar\'e inequality (\abbr{PI}) of \eqref{poincare}, implies the 
strong-\abbr{PI} where $\B(x_0,r)$ replaces $\B(x_0,2r)$ on the \abbr{RHS} of \eqref{poincare} (see \cite[Cor. A.51]{Ba2} or
\cite[Prop. 3.3.2]{Kum}).
From the strong-\abbr{PI} on $\B'$ and the preceding bounds,
\begin{align}
\langle v_s^2 \zeta \rangle_{\pi_s}  =
\bar{v}_s^2 \langle \zeta \rangle_{\pi_s} +
\langle (v_s-\bar{v}_s)^2 \zeta \rangle_{\pi_s}
\le  \frac{16^2 F(s)^2}{\pi_0(\B')} + 
16 \CP R^2  
\langle (\eta_{\wedge} K_s) |\nabla v_s|^{2} \rangle_{\pi_s} \,.
\label{poinc-zeta}
\end{align}
Combining \eqref{ubd:I}-\eqref{poinc-zeta}, 
we get for some $C_3(C_1,C_2,\Cd,\CP)$ finite 
and all $t\in[T-3R^2,T-2R^2]$,
\begin{align}\label{L2-bound-v}
\int_{I_t \times \B'} v^2 d\bpi \le C_3 R^2 \pi_0(\B) \delta^{-2} \, .
\end{align}
Applying the $L^2$-mean value of Prop. \ref{csrw-ML1} to 
the sub-solution $v$ on $I_t \times \B(x,R/2))$ 
together with the \abbr{VD} property of $\pi_0$, 
we get for $C_i=C_i(\CP,\Cd,\alpha_e,\CB)$ finite 
and all $(t,x)\in Q(T-3R^2,T-2R^2;z,R)$,
\begin{align*}
v^2(t,x) & \le\frac{C_4 R^{-2} }{\pi_{0}\left(\B(x,R/2)\right)}
\int_{I_t \times \B(x,R/2)} v^2 d \bpi 
 \le\frac{C_5 R^{-2}}{\pi_{0}(\B)}\int_{I_t \times \B'}
v^2 d \bpi \;\le C_6^2 \delta^{-2},
\end{align*}
using \eqref{L2-bound-v} in the last step.
That is, \eqref{eq:H-large-measure} holds with $\varepsilon= \exp(-C_6/\delta)>0$.
\end{proof}

Adapting the derivation of \cite[Lemma 4.3]{Gr} (out of \cite[Lemma 4.1]{Gr}),
yields the following consequence of Lemma \ref{first-gr}.
\begin{lem}\label{first-gr2}
For $\{\blG_t\}$ as in Theorem \ref{harnack} there exist 
finite $\eta=\eta(\CP,\Cd,\alpha_e,\CB)$ and $R_0=R_0(\CP,\Cd,\alpha_e,\CB)$
such that for any $R \ge R_0$ and all $z$, $T$, $u(\cdot,\cdot)$ as in Lemma \ref{first-gr},
\begin{equation}\label{eq:small-measure}
\frac{\bpi (Q(T-R^{2},T;z,R) \cap u^{-1}([1,\infty)))}{\bpi(Q(T-R^{2},T;z,R))} \le \eta \;\; \& \;\;
u(T-R^2,z)\ge 2
\quad \Longrightarrow \quad
\sup_{Q(T-2R^{2},T;z,R)} u \ge 4 \,.
\end{equation}
\end{lem}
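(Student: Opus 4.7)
The plan is to argue by contradiction: suppose $\sup_{Q^{**}} u < 4$ where $Q^{**} := Q(T - 2R^{2}, T; z, R)$. Then $v := 4 - u$ is, by Remark \ref{rmk:str-pos}, a positive solution on $Q^{**}$ with $v(T - R^{2}, z) \le 2$, while the hypothesis on $\{u \ge 1\}$ yields $\bpi(\{v \ge 3\} \cap Q(T-R^{2}, T; z, R)) \ge (1 - \eta)\,\bpi(Q(T-R^{2}, T; z, R))$. I would apply Lemma \ref{first-gr} to $v/3$ at the rescaled spatial scale $R_1 := R/2$, choosing reference times $T_1 \in [T - R^{2}/2,\,T - R^{2}/4]$ so that the input cylinder $Q(T_1 - 4R_1^2, T_1; z, R) \subset Q^{**}$, the fraction sub-cylinder $Q(T_1 - R_1^2, T_1; z, R_1) \subset Q(T-R^{2}, T; z, R)$, and the conclusion sub-cylinder $Q(T_1 - 3R_1^2, T_1 - 2R_1^2; z, R_1)$ contains $(T-R^{2}, z)$. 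The uniform volume doubling (with the ratio bound $\CB$ on $\pi_t/\pi_0$) allows one to transfer the density bound to the sub-cylinder with loss at most a factor $4\Cd$, so for $\eta$ small enough the fraction of $\{v/3 \ge 1\}$ there is $\ge 1/2$. Lemma \ref{first-gr} with $\delta = 1/2$ then yields $v \ge 3\varepsilon_0$, i.e.\ $u \le M_1 := 4 - 3\varepsilon_0$, on each conclusion sub-cylinder, where $\varepsilon_0 := \varepsilon(1/2, \CP, \Cd, \alpha_e, \CB)$. Taking the union over admissible $T_1$ gives $\sup_{Q_1} u < M_1$ on the thickened cylinder $Q_1 := Q(T - R^{2} - R_1^2, T - R^{2} + R_1^2; z, R_1)$, which contains $(T - R^{2}, z)$ and straddles time $T - R^{2}$.

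The core of the proof is then to iterate this step. Define $M_{k+1} := M_k - (M_k - 1)\varepsilon_0$, so that $M_k - 1 = 3(1 - \varepsilon_0)^k \downarrow 0$, and let $k_\star$ be the least $k$ with $M_k < 2$. Inductively, at step $k+1$ I would apply Lemma \ref{first-gr} to the positive solution $(M_k - u)/(M_k - 1)$ on input cylinders of scale $R_{k+1} := R/2^{k+1}$ sitting inside $Q_k$; the admissible reference times form the interval $T_{k+1} \in [T - R^{2} + 2R_{k+1}^2,\,T - R^{2} + 3R_{k+1}^2]$, satisfying all three required constraints (input cylinder in $Q_k$, fraction sub-cylinder in $Q(T-R^{2}, T; z, R)$, and conclusion sub-cylinder containing $(T-R^{2}, z)$), and the union of the resulting conclusion sub-cylinders is precisely $Q_{k+1} = Q(T - R^{2} - R_{k+1}^2, T - R^{2} + R_{k+1}^2; z, R_{k+1})$. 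At step $k$, volume doubling inflates the fraction by at most $(4\Cd)^k$, so requiring $\eta \le (2(4\Cd)^{k_\star})^{-1}$ keeps $\delta = 1/2$ throughout; after $k_\star$ iterations one obtains $u(T - R^{2}, z) < M_{k_\star} < 2$, contradicting the hypothesis. Since $k_\star$ depends only on $\varepsilon_0$, this yields $\eta = \eta(\CP, \Cd, \alpha_e, \CB)$, and $R \ge R_0 := 2^{k_\star}$ ensures the innermost spatial scale stays above the kernel range $r_0 = 1$.

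The main obstacle is the tension among the three cylinder constraints at each iteration: Lemma \ref{first-gr} always places its conclusion sub-cylinder at \emph{earlier} times than its fraction sub-cylinder, while the measure hypothesis is available only at times $\ge T - R^{2}$ and the target point sits exactly at time $T - R^{2}$. The key device is that the union over admissible reference times at each step thickens $Q_k$ symmetrically around $T - R^{2}$ by exactly $R_k^2$ on each side, providing a buffer four times as large as required (namely $R_{k+1}^2 = R_k^2/4$) for the next step's fraction sub-cylinder to land in $[T-R^{2}, T]$ while its input cylinder remains inside $Q_k$. Tracking this geometry over $k_\star = O(\log 3 / \varepsilon_0)$ iterations, with the volume-doubling loss accumulated as $(4\Cd)^{k_\star}$, determines the final values of $\eta$ and $R_0$.
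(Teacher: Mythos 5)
Your proposal is correct and reconstructs, in careful detail, precisely the bootstrap argument of Grigor'yan's Lemma~4.3 from his Lemma~4.1, which is exactly what the paper invokes (with only a citation) for this step: argue by contradiction, apply Lemma~\ref{first-gr} to the renormalized solutions $(M_k - u)/(M_k - 1)$ on dyadically shrinking cylinders with the reference time shifted over an interval of length $R_k^2$ so that the union of conclusion cylinders straddles $(T-R^2,z)$, and drive $M_k \downarrow 1$ to contradict $u(T-R^2,z)\ge 2$. The only quibbles are cosmetic — the $\CB$ factor should ride along in the volume-doubling loss and $R_0$ should be chosen a power of two large enough that $R_{k_\star}$ stays above the radius threshold of Proposition~\ref{csrw-ML1} — but the geometry of the nested cylinders, the choice $\delta=1/2$ via $\eta\lesssim(\CB\,4\Cd)^{-k_\star}$, and the recursion $M_{k+1}-1=(1-\varepsilon_0)(M_k-1)$ are all exactly right.
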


\begin{remark} 
An alternative and quicker approach by Fabes-Stroock utilizes 
the weighted Poincar\'{e} inequality in a different way (e.g. \cite{FS,Ba,BK}).
It relies on having a-priori that  
\[
K_{t-s,t}(x,x)\le\frac{C}{\pi_{0}(\B(x,\sqrt{s}))}\ \quad 
\& \quad \inf_{s,t,y} \sum_{x\in\B(y,C\sqrt{s})}K_{t-s,t}(x,y) > 0 
\]
(which take the role of \eqref{eq:H-large-measure} in proving 
the first growth lemma). However, lacking a uniform in $y$ 
lower bound on $\sum_{x\in\B(y,C\sqrt{s})}K_{t-s}(x,y)$, 
prevents using this approach in our time-varying setting. 
\end{remark}

Under uniform volume doubling condition, the first growth lemma implies second growth lemma, following the same proof as \cite[Lemma 4.2]{Gr} verbatim.

\begin{lem}\emph{[Second growth lemma]}\label{second-gr}
For $\{\blG_t\}$ of Theorem \ref{harnack}, some
$\theta=\theta(\CP,\Cd,\alpha_e,\CB)$ finite,
$c=c(\delta,\CP,\Cd,\alpha_e,C_{0})>0$, 
$u(\cdot)$ as in Lemma \ref{first-gr}
and $T' \in [T-(R/2)^{2}+r^2,T]$,
\[
\frac{\bpi (Q(T'-r^{2},T';z,R) \cap u^{-1}([1,\infty)))}
{\bpi (Q(T'-r^{2},T';z,r))} \ge \delta \quad 
\Rightarrow \quad
u(T-4R^{2},z)\ge c\left(\frac{\bpi(Q(T'-r^{2},T';z,r))}
{\bpi(Q(T-R^{2},T;z,R))}\right)^{\theta}\,.
\]
\end{lem}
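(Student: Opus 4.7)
The plan is to iterate Lemma \ref{first-gr}, following Grigor'yan's chain argument, to propagate the lower bound from a small cylinder at scale $r$ near the ``present'' time $T'$ back to the single point $(T-4R^2,z)$, picking up a geometric loss at each scale doubling that the uniform volume doubling condition keeps under control.

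I would set $r_k:=2^k r$ and choose nested time-space cylinders $\tilde{Q}_k = Q(T_k - r_k^2, T_k; z, r_k)$ with times $T_k$ decreasing so that $\tilde{Q}_k \subset \tilde{Q}_{k+1}$, and so that at every $k \le N:=\lceil\log_2(R/r)\rceil$, the enlarged cylinder $Q(T_k - 4 r_k^2, T_k; z, 2 r_k)$ required to invoke Lemma \ref{first-gr} at scale $r_k$ still fits inside the domain $Q(T-4R^2, T; z, 2R)$ of $u$. The cumulative time budget along the chain is $\sum_{k=0}^{N} r_k^2 \asymp R^2$, which fits within $[T-4R^2, T']$ since $T' \ge T-R^2/4 + r^2$. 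At the initial stage, the hypothesis on $Q(T'-r^2,T';z,r)$ together with Lemma \ref{first-gr} yields $u \ge \varepsilon_0 := \varepsilon(\delta)$ on the base cylinder $\tilde{Q}_0$. Inductively, suppose $u \ge a_k$ on $\tilde{Q}_k$; then by the uniform \abbr{VD} condition \eqref{vd} combined with the two-sided comparison $\pi_0 \le \pi_t \le \CB \pi_0$, one has $\bpi(\tilde{Q}_k)/\bpi(\tilde{Q}_{k+1}) \ge \delta'$ for some fixed $\delta' = \delta'(\Cd,\CB) > 0$ independent of $k$. Applying Lemma \ref{first-gr} at scale $r_{k+1}$ to the positive solution $u/a_k$ then produces $u \ge a_{k+1} := \varepsilon(\delta') a_k$ on $\tilde{Q}_{k+1}$.

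After $N$ iterations we reach a cylinder of spatial scale $r_N \asymp R$ containing $(T-4R^2, z)$, on which
\[
u \;\ge\; \varepsilon_0 \,\varepsilon(\delta')^{N-1} \;\ge\; c_1 \,(r/R)^{\theta'}, \qquad \theta' := \log_2\!\big(1/\varepsilon(\delta')\big),
\]
for $c_1 = c_1(\delta,\CP,\Cd,\alpha_e,\CB)>0$. Finally, the \abbr{VD} property together with $\CB$-comparability gives an estimate of the form
\[
\frac{\bpi(Q(T'-r^2,T';z,r))}{\bpi(Q(T-R^2,T;z,R))} \;\le\; C\,(r/R)^{\theta''}
\]
with $\theta''=\theta''(\Cd)>0$ and $C=C(\Cd,\CB)$, so inverting this converts the $(r/R)^{\theta'}$ bound into $c\, (\text{vol ratio})^{\theta}$ for $\theta = \theta'/\theta''$. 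The main obstacle, exactly as in Grigor'yan's original argument, is the geometric bookkeeping: arranging the chain $\{\tilde{Q}_k\}$ so that every enlarged cylinder remains inside $Q(T-4R^2,T;z,2R)$ (so Lemma \ref{first-gr} applies at each scale), while simultaneously preserving the density hypothesis uniformly along the chain via the volume doubling condition. The dependence on $\CB$ enters only through this last step, where comparing $\pi_t$-volumes at different times $t$ to a common reference requires the uniform boundedness $\CB < \infty$.
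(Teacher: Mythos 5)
Your proposal is correct and follows the same route the paper intends: the paper's entire proof of Lemma \ref{second-gr} is the single sentence that it follows \cite[Lemma 4.2]{Gr} verbatim from Lemma \ref{first-gr} plus uniform volume doubling, which is precisely the iterate-and-double chain you sketch. One small imprecision worth noting: the relation $\tilde Q_k\subset\tilde Q_{k+1}$ is not what the iteration actually uses --- what matters is that $\tilde Q_k$ (the output slab of Lemma \ref{first-gr} at stage $k$) sit inside the \emph{input} slab $Q(T_{k+1}-r_{k+1}^2,T_{k+1};z,r_{k+1})$ of the first growth lemma at stage $k+1$, not inside the output slab $\tilde Q_{k+1}$ (which lies strictly earlier in time), so the chain is a zig-zag moving backward in time, consuming $O(r_k^2)$ per doubling; this is exactly the geometric bookkeeping you already flag and is what the constraint $T'\ge T-(R/2)^{2}+r^{2}$ is for, ensuring $\sum_k r_k^2\lesssim R^2$ fits inside $[T-4R^2,T']$.
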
 

\noindent
{\emph{Proof of Theorem \ref{harnack} [sketch, following \cite{Gr}].}}
In case of manifolds, the derivation of \abbr{PHI} 
from the first and second growth lemmas is standard in the literature. 
We sketch here the adaptation for discrete graphs of the
argument provided near the end of \cite[Section 4]{Gr},
where one is restricted to choose cylinders of radii at least $R_0 \ge 1$.
Specifically, with $Q(R)$ denoting the cylinder $Q(T-R^2,T;z,R)$, our
goal is to show that for some $\gamma(\CP,\Cd,\alpha_e,\CB)>0$ 
if $\sup_{Q(T-4R^2,T-3R^2;z,R)}\; u = 1$ for some 
positive solution $u(\cdot,\cdot)$ on $Q(8R)$, then necessarily 
$u(T-48R^2,z)\ge \gamma$.
To this end, for $R_0 \ge 1$ of Lemma \ref{first-gr2}
set $\eta_0:=\Cd^{-R_0}/(\CB R_0^2)$ and fix the largest 
$\eta \in (0,\eta_0]$ for which Lemma \ref{first-gr2} holds. 
By Lemma \ref{first-gr}, if
$\bpi (E_0) \ge \delta \bpi (Q(4R))$ for $E_0:=Q(4R)\cap u^{-1}([2^{-1},\infty)$
and $\delta:=\eta/(64 \CB \Cd^4)$,
then $u(T-48R^2,z)\ge \varepsilon$ for some 
$\varepsilon=\varepsilon(\delta,\CP,\Cd,\alpha_e,\CB)>0$, verifying 
the claimed \abbr{PHI} in this case. Next, suppose to the contrary that 
$\delta \bpi (Q(4R)) > \bpi(E_0)$. Recall our assumption that
$u(T-t_0,x_0)=1$ for some $t_0 \in [3R^2,4R^2]$ and $x_0 \in \B(z,R)$,
whereas
by the uniform volume doubling condition and our choice of $\delta$, 
\begin{equation}\label{eq:tianyi-contr}
\eta \bpi (Q(T-t_0-(R/2)^2,T-t_0;x_0,R/2)) \ge \delta \bpi (Q(4R)) > 
\bpi(E_0) \,.
\end{equation}
Next let $r_0$ be the maximum integer $r \le R/2$ for which  
\[
\bpi (Q(T-t_0-r^{2},T-t_0;x_0,r) \cap u^{-1}([2^{-1},\infty)))
\ge \eta \bpi(Q(T-t_0-r^{2},T-t_0;x_0,r)) \,.
\]
Having $u(T-t_0,x_0)=1$, we deduce from the uniform volume doubling condition 
and our choice of $\eta \le \eta_0$ that necessarily $r_0\ge R_0$. Further, 
in view of \eqref{eq:tianyi-contr} also $r_0< R/2$ and 
employing Lemma \ref{first-gr2}, we have that
\[
u(T-t_1,x_1) =\sup_{Q(T-t_0-2(r_0+1)^{2},T-t_0;x_0,r_0+1)} u \ge 2 \,.
\]
This procedure is iterated in the same way as at \cite[end of Section 4]{Gr}, 
with the only the change being that 
 $r_k$ is defined be the maximum integer $r$ such that 
\[
\frac{\bpi (Q(T-t_k-r^{2},T-t_k;x_k,r) \cap u^{-1}([2^{k-1},\infty)))}{\bpi(Q(T-t_k-r^{2},T-t_k;x_k,r))} \ge \eta.
\]
The proof concludes as in \cite{Gr} by choosing a good index $k$ and applying Lemma \ref{second-gr}
to the pair of cylinders $Q(T-t_k-r_k^2,T-t_k;x_k,r_k)$ and $Q(4R)$.
\qed

\medskip
\noindent
{\emph{Proof of Proposition \ref{ppn:holder}. }} 
Fixing $y_j \in \B(z,R)$ and $(T-s_j) \in [R^2,4R^2]$ such that 
$s_2 \ge s_1$, we consider nested time-space cylinders 
$Q(i):=[s_1,s_1+R_i^2] \times \B(y_1,R_i)$ 
for $R_i=2^i$, $i \ge 0$ and the corresponding
$M(i):=\sup_{Q(i)} \{u\}$, $m(i):=\inf_{Q(i)} \{u\}$ and $w(i):=M(i)-m(i)$.
With $i_2 := \sup\{ i : Q(i) \subseteq Q \}$ 
we have that $w(i_2) \le M(i_2) \le \sup_Q \{u\}$. Similarly, 
setting $r:=|s_2-s_1|^{1/2} \vee d(y_1,y_2)$ and
$i_1 := \inf\{ i : r \le R_i \}$ 
we have that $(s_j,y_j) \in Q(i_1)$, $j=1,2$, hence
$|u(s_2,y_2)-u(s_1,y_1)| \le w(i_1)$. Clearly $R_{i_1} \le 2r$, 
while $R \le 2 R_{i_2}$ (since $(T-s_1) \ge R^2$
and $\B(y_1,R) \subseteq \B(z,2R)$). The inequality \eqref{eq:holder}
is trivial unless $r \le R/4$ and thereby $i_1 \le i_2$. 
It thus suffices to show that $w(i-1) \le (1-\gamma) w(i)$ for
some $\gamma(\theta_j)$ from \eqref{eq:harnack} and all $i \le i_2$
(as then $w(i_1)\le (1-\gamma)^{i_2-i_1}w(i_2)$, 
yielding \eqref{eq:holder} for $2^{-h} = 1-\gamma$).
To this end, consider for (non-negative) 
solutions $u-m(i)$ and $M(i)-u$, 
the \abbr{PHI} in $Q(i) \subseteq Q$ with
$\theta_1=\theta_2=\frac{1}{\sqrt{2}}$, $\theta_3=\frac{\sqrt{3}}{2}$
and $\theta_4=1$,
to compare the solution at $(s_1 +2 R_{i-1}^2,y_1)$ 
with its infimum over $Q(i-1)$.
Setting $v(i)=u(s_1+ 2 R_{i-1}^2,y_1)$, these
comparisons yield by the \abbr{phI} that
\begin{align*}
\gamma (v(i)-m(i))&\le  m(i-1)-m(i),\\
\gamma (M(i)-v(i))& \le M(i)-M(i-1),
\end{align*}
and hence $w(i-1)\le (1-\gamma)w(i)$, as claimed.
\qed

\end{section}

\begin{section}{From \abbr{GHKU} and \abbr{PHI} to Gaussian lower bounds}
\label{sect:conseq} 

In this section we establish the matching \abbr{GHKE} of 
\eqref{ghke} 
out of the (weaker) upper bound \eqref{ghku-pi} and the \abbr{PHI} \eqref{eq:harnack}. To this end, we start with the following 
elementary fact.
\begin{lem}\label{lem:mass-conc}
Suppose $\Gamma_{\tau}: V \mapsto \R_+$ are such that  
\begin{equation}\label{eq:mod-ghku}
\Gamma_{\tau}(z) \le \frac{C}{v(\sqrt{\tau})}e^{-\rho_\tau(x,z)/C} \,,
\end{equation}
for some $C<\infty$, $x \in V$, 
a doubling function $v(r) \ge \CV^{-2} |\B(x,r)|$ and 
\begin{equation}
\label{dfn:rho_t}
\rho_t(x,z) := d(x,z) \Big(\frac{d(x,z)}{t} \wedge 1 \Big) \;, 
\qquad \qquad \;\; x,z \in V\,, \;\; t \ge 0 \,.
\end{equation}
Then, for some $c'(\CV,C)$ finite, 
\begin{equation}\label{eq:tail-level}
\Gamma_\tau(\B(x,R)^c) \le c'  e^{-R/(2C)} \,,
\qquad \forall R \ge \tau \ge 1\,, \;\; \forall x \in V \,.
\end{equation}
Further, if $\Gamma_\tau$ are probability measures, then 
for $b>0$ and some $\kappa(\CV,C,b)<\infty$,
\begin{equation}\label{eq:pos}
\inf_{\tau \ge b, x \in V} 
\, \{ \, \Gamma_{\tau}(\B(x,\kappa \sqrt{\tau})) \, \} \, 
\ge\frac{1}{2} \,.
\end{equation}
\end{lem}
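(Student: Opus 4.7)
The approach is an annular (shell) decomposition of $V\setminus\B(x,R)$, combining the pointwise bound \eqref{eq:mod-ghku} with the polynomial volume growth
$v(ar)\le \CV\, a^{D}\, v(r)$ for $a\ge 1$, $D := \log_2 \CV$, obtained by iterating the doubling property of $v$ and using $|\B(x,ar)|\le \CV^2 v(ar)$.

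For \eqref{eq:tail-level}, I set $R_k:=2^k R$ and $A_k:=\B(x,R_{k+1})\setminus \B(x,R_k)$, $k\ge 0$. Since $R\ge \tau$, every $z\in A_k$ satisfies $d(x,z)\ge R_k\ge \tau$, so \eqref{dfn:rho_t} forces $\rho_\tau(x,z)\ge d(x,z)\ge R_k$. Plugging this into \eqref{eq:mod-ghku} and using $|\B(x,R_{k+1})|/v(\sqrt{\tau})\le \CV^3 (R_{k+1}/\sqrt{\tau})^{D}$ gives
\[
\Gamma_\tau(A_k)\le C\,\CV^3\,(2^{k+1}R/\sqrt{\tau})^{D}\, e^{-2^kR/C}.
\]
Summing in $k$ is a routine geometric estimate: for $R$ above some threshold $R_0(C,D)$ the series is dominated by its $k=0$ term, yielding $\Gamma_\tau(\B(x,R)^c)\le c_1 (R/\sqrt{\tau})^{D} e^{-R/C}$. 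Since $\sqrt{\tau}\ge 1$ and $\sup_{R\ge 0} R^{D} e^{-R/(2C)}<\infty$, this is bounded by $c' e^{-R/(2C)}$. For the remaining bounded range $\tau\le R\le R_0$, I will run a similar shell decomposition on all of $V$ starting at scale $\sqrt{\tau}$ (splitting into $d(x,z)\le \sqrt{\tau}$, the near Gaussian regime $\sqrt{\tau}\le d(x,z)\le \tau$, and the exponential regime $d(x,z)\ge \tau$) to show $\sum_z \Gamma_\tau(z)$ is bounded by a constant depending only on $\CV,C$, absorbing the residual into $c'$.

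For \eqref{eq:pos}, write $R=\kappa\sqrt{\tau}$. When $\tau\le \kappa^2$ (so $R\ge \tau$) the claim reduces to \eqref{eq:tail-level}, which gives $\Gamma_\tau(\B(x,R)^c)\le c'e^{-\kappa\sqrt{b}/(2C)}\le \tfrac{1}{2}$ for $\kappa$ large in terms of $c',C,b$. When $\tau>\kappa^2$ I re-run the annular decomposition and split at the Gaussian–exponential crossover $k^*:=\lceil \log_2(\sqrt{\tau}/\kappa)\rceil$, where $R_{k^*}\sim \tau$. For $k<k^*$ the near-field bound $\rho_\tau\ge R_k^2/\tau=4^k\kappa^2$ yields super-Gaussian decay summing to $\le C_2\kappa^{D} e^{-\kappa^2/C}$. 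For $k\ge k^*$ the far-field bound $\rho_\tau\ge R_k\ge \tau$ yields a tail controlled, after reindexing $m=k-k^*$, by a constant multiple of $\tau^{D/2} e^{-\tau/C}$; since $\sigma\mapsto \sigma^{D/2} e^{-\sigma/C}$ is decreasing on $[DC/2,\infty)$ and $\tau\ge \kappa^2\ge DC/2$ (for $\kappa$ large), this is in turn at most $\kappa^{D} e^{-\kappa^2/C}$. Combining both contributions gives $\Gamma_\tau(\B(x,R)^c)\le C_3\kappa^{D} e^{-\kappa^2/C}$, which is $\le \tfrac{1}{2}$ once $\kappa$ is large in terms of $\CV,C,b$.

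The main technical point is the uniform-in-$\tau$ control in the far-field portion of part (b): the naive estimate produces an isolated factor $e^{-\tau/C}$, and the key trick is monotonicity of $\sigma^{D/2}e^{-\sigma/C}$ beyond its maximum, which converts this into the desired $\kappa^{D} e^{-\kappa^2/C}$ uniformly over $\tau\ge \kappa^2$. Apart from this crossover, both parts reduce to standard geometric summations of volume $\times$ exponential weight.
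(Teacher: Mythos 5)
Your proof is correct, and it reaches the same conclusion through a closely related but not identical decomposition. The paper organizes the sum over $V$ by level sets $\{d(x,\cdot)=\ell\}$, performs a summation by parts to trade the boundary counts $|\partial\B(x,\ell)|$ for ball volumes multiplied by the increment $a_\ell - a_{\ell+1}$ of the weight $a_\ell=e^{-\ell(\ell\wedge\tau)/(C\tau)}$ (bounded by $3\ell a_\ell/(C\tau)$), and then compares the resulting sum to a single integral $\int(4u)^{c_v+1}e^{-u(u\wedge\sqrt{b})/C}\,du$, which handles the Gaussian/exponential crossover automatically and treats small and large $R$ in one stroke. You instead use dyadic annuli $A_k=\B(x,2^{k+1}R)\setminus\B(x,2^kR)$, bound each annulus by its volume times the worst exponential factor on it, and sum the resulting (super-)geometric series, splitting explicitly at the crossover scale $R_{k^*}\approx\tau$ in part (b) and using the monotonicity of $\sigma^{D/2}e^{-\sigma/C}$ beyond its maximum to absorb the far-field contribution into $\kappa^De^{-\kappa^2/C}$. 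Your route is more elementary and fully explicit but requires two extra case distinctions (the bounded range $\tau\le R\le R_0$ in part (a), and the $\tau\le\kappa^2$ versus $\tau>\kappa^2$ dichotomy in part (b)) which the paper's integral comparison avoids; on the other hand, the summation-by-parts step in the paper is a small extra trick whose only benefit here is this compactness. Both are standard volume-doubling tail estimates and both are correct.
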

\begin{proof} Note that 
$1 - e^{-(1 + 2 (\ell \wedge \tau))/(C\tau)} \le 3 \ell/(C\tau)$ 
for 
$\ell \ge 1$. From \eqref{eq:mod-ghku} we thus get
after summation by parts 
that 
\[
\Gamma_{\tau} (\B(x,R)^c) 
\le\frac{C}{v(\sqrt{\tau})}
\sum_{\ell>R} |\partial \B(x,\ell)| e^{-\ell(\ell \wedge \tau)/
(C \tau)} 
\le 3 \CV^2 
\sum_{\ell > R} \frac{v(\ell) \ell}{v(\sqrt{\tau}) \tau}
 e^{-\ell (\ell \wedge \tau)/(C \tau)} \,.
\]
With $v(\ell)$ doubling, one has 
$v(\ell) \le v(\sqrt{\tau}) (2 \ell/\sqrt{\tau})^{c_v}$
for $c_v := \log_2 \CV$ and any $\ell \ge \sqrt{\tau}$. 
Hence, 
for some $c'(\CV,C)$ and any $R \ge \tau \ge 1$,
\begin{equation*}
\Gamma_{\tau} (\B(x,R)^c) \le 3 \CV^2 
\int_R^\infty (4 u)^{c_v+1} e^{-u/C} du \le c' e^{-R/(2C)} 
\end{equation*}
as claimed in \eqref{eq:tail-level}. Similarly, 
for $\kappa=\kappa(\CV,C,b)$ large enough and all $\tau \ge b$,
\begin{equation*}
\Gamma_{\tau} (\B(x,\kappa \sqrt{\tau})^c) \le 3 \CV^2  \int_{\kappa}^\infty 
(4u)^{c_v+1} e^{-u(u\wedge \sqrt{b})/C} du  \le \frac{1}{2} \,,
\end{equation*}
yielding \eqref{eq:pos} in case $\Gamma_\tau(V)=1$. 
\end{proof}

We next utilize the fact  
that for $\{K_t\}$ of \eqref{n-kernel}, 
the transition probabilities $K_{\cdot,t}(\cdot,y)$ of the \abbr{CSRW} 
and \abbr{DTRW}, are 
solutions of \eqref{dfn:sol-cyl}.

\begin{lem}\label{lem:csrw-sol}
Fixing Borel measurable $\{\bPi_s\}$,  
$t \ge 0$ and $z_\star \in V$,
the functions $u(s,x) := K_{s,t}(x,z_\star)$ for  
the \abbr{CSRW} and \abbr{DTRW} associated with \eqref{n-kernel},
solve the corresponding heat equation \eqref{dfn:sol-cyl}
on the time-space cylinder $Q(0,t;z,R)$ for any $R \ge 1$
and $z \in V$.  
The \abbr{PHI} then implies that:\\
(a). For some
$\gamma(\varphi,\delta) \in (0,1)$, any $\delta \in (0,1)$, 
$\varphi \ge 1/(1-\delta)$, all $t \ge \tau \ge 1$,  
and $x_1,x_2,z_\star \in V$,  
\begin{equation}\label{eq:phi-comp}
d(x_1,x_2) \le 2 [2 \varphi \sqrt{\tau}] \quad \Longrightarrow \quad
K_{t-\tau,t}(x_2,z_\star) \ge \gamma K_{t-\delta \tau,t}(x_1,z_\star) \,,
\end{equation}
where for \abbr{dtrw} we further assume 
that $\tau \in \N$ and $d(x_1,x_2) \le (1-\delta) \tau \in \N$.
\newline
(b). Suppose in addition that
$\CB:=\sup_{s,x}\left\{\frac{\pi_s(x)}{\pi_0(x)}\right\}$ is finite, 
$s\mapsto\pi_s(\cdot)$ is non-decreasing and the \abbr{DTRW} is 
uniformly lazy. Then, 
\begin{align}\label{dfn:D}
s \mapsto D_{s,t}(y):=
\sum_{x\in V}\pi_s(x) K_{s,t}(x,y)^2 
\end{align}
is non-decreasing on $[0,t]$. Further, for 
$C_1=C_1(\gamma,\alpha_l,\CB)$ finite,
\begin{align}\label{eq:D-doubling}
C_1 D_{t-2s,t}(y) \ge D_{t-s,t}(y) \,, \quad \forall y \in V, s \ge 0 \,.
\end{align}
\end{lem}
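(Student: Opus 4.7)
For part (a), my first step is to verify that $u(s,x):=K_{s,t}(x,z_\star)$ solves \eqref{dfn:sol-cyl} on any cylinder $Q(0,t;z,R)$. For the \abbr{CSRW} this is exactly the content of \eqref{def:k-st}; for the \abbr{DTRW}, the Chapman--Kolmogorov identity \eqref{def:k-mn} rewrites as $u(s-1,x)=\sum_y K_s(x,y)u(s,y)$, which is \eqref{dfn:sol-cyl} under the discrete convention $\partial_{-s}u(s,\cdot):=u(s-1,\cdot)-u(s,\cdot)$. The inequality \eqref{eq:phi-comp} then follows by a direct application of the \abbr{PHI} with $T=t$, $\tau_1=\delta\tau$, $\tau_2=\tau$. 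Concretely, pick $R:=\lceil 2\varphi\sqrt{\tau}\rceil\ge 1$, let $z$ be a graph-midpoint of $x_1,x_2$ so that $x_1,x_2\in\B(z,R)$ (possible since $d(x_1,x_2)\le 2R$), set $\theta_4:=\sqrt{\tau}/R\le 1/(2\varphi)$ so that $(\theta_4 R)^2=\tau\le t$ and the required $T\ge(\theta_4 R)^2$ holds, take $\theta_2:=\sqrt{\delta\tau}/R$, and then choose $\theta_3$ slightly below $\theta_4$ and $\theta_1$ slightly below $\theta_2$. The assumption $\delta<1$ forces $\theta_2<\theta_3$ with a quantitative gap depending only on $(\varphi,\delta)$, so the resulting $\gamma=\gamma(\theta_i)$ depends only on these parameters. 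In the \abbr{DTRW} case the extra \abbr{PHI} constraint $\tau_2\ge\tau_1+d(x_1,x_2)$ is exactly the hypothesis $d(x_1,x_2)\le(1-\delta)\tau$.

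For part (b), the monotonicity of $s\mapsto D_{s,t}(y)$ is obtained from Cauchy--Schwarz applied to the semigroup factorisation $K_{s_1,t}(x,y)=\sum_z K_{s_1,s_2}(x,z)K_{s_2,t}(z,y)$, using that $K_{s_1,s_2}(x,\cdot)$ is a probability on $V$:
\begin{equation*}
K_{s_1,t}(x,y)^2\le\sum_z K_{s_1,s_2}(x,z)\,K_{s_2,t}(z,y)^2\,.
\end{equation*}
Summing against $\pi_{s_1}(x)$ and using $\sum_x\pi_{s_1}(x)K_{s_1,s_2}(x,z)=\mu_{s_1,s_2}(z)\le\pi_{s_2}(z)$ from \eqref{eq:mon-mu-n} yields $D_{s_1,t}(y)\le D_{s_2,t}(y)$.

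For \eqref{eq:D-doubling}, the main mechanism is to invoke part (a) with $x_2=x_1=x$, $\tau=2s$, $\delta=1/2$ and a fixed $\varphi$ (say $\varphi=2$), which gives $K_{t-2s,t}(x,y)\ge\gamma K_{t-s,t}(x,y)$ pointwise whenever $s\ge 1/2$ (and $2s\in\N$ in the \abbr{DTRW} case). Combined with the bound $\pi_{t-s}(x)\le\CB\pi_0(x)\le\CB\pi_{t-2s}(x)$, coming from monotonicity of $t\mapsto\pi_t(x)$ and the hypothesis $\CB<\infty$, one concludes
\begin{equation*}
D_{t-s,t}(y)\le\CB\sum_x\pi_{t-2s}(x)\,K_{t-s,t}(x,y)^2\le\frac{\CB}{\gamma^{2}}\,D_{t-2s,t}(y)\,.
\end{equation*}
The residual small-$s$ regime ($s\in[0,1/2)$ for \abbr{CSRW} and $s\in\{0,1\}$ for the uniformly lazy \abbr{DTRW}) is handled directly: one has $K_{t-2s,t}(y,y)\ge e^{-2s}$ in the continuous case from the no-jump probability, or $\ge\alpha_l^{2}$ for \abbr{DTRW} at $s=1$ from laziness applied to both steps, giving $D_{t-2s,t}(y)\gtrsim\pi_t(y)/\CB$; on the other hand $D_{t-s,t}(y)\le\mu_{t-s,t}(y)\le\pi_t(y)$ by \eqref{eq:mon-mu-n} and the pointwise bound $K_{t-s,t}(x,y)^2\le K_{t-s,t}(x,y)$. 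The resulting constant depends only on $\CB,\alpha_l$, and is absorbed into $C_1$.

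The most delicate step is the parameter fit in part (a): the \abbr{PHI} requires $T\ge(\theta_4 R)^2$, whereas the hypothesis supplies only $t\ge\tau$, forcing $\theta_4=\sqrt{\tau}/R$ to depend on $(\tau,R)$ rather than being a universal constant. Since $R/\sqrt{\tau}\in[2\varphi,2\varphi+1/\sqrt{\tau}]$ stays in a bounded interval, however, the dependence of $\gamma$ on $(\varphi,\delta)$ remains quantitative and the proof goes through.
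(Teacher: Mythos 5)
Your argument for part (b)'s monotonicity is genuinely different from the paper's, and in fact more elementary. You factor $K_{s_1,t}=K_{s_1,s_2}K_{s_2,t}$, apply Jensen in the probability measure $K_{s_1,s_2}(x,\cdot)$ to get $K_{s_1,t}(x,y)^2\le\sum_z K_{s_1,s_2}(x,z)K_{s_2,t}(z,y)^2$, integrate against $\pi_{s_1}$, and invoke $\mu_{s_1,s_2}\le\pi_{s_2}$ from \eqref{eq:mon-mu-n}. The paper instead re-uses the integral-maximum-principle fact proved inside Lemma \ref{gaffney} (that $s\mapsto E_s(u)$ of \eqref{def:J_t} is non-decreasing, applied with $f_s\equiv1$). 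Both are correct; yours is self-contained and avoids reaching back into the Gaffney machinery. Your doubling argument and small-$s$ patch are the same in substance as the paper's.

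Part (a), however, contains a gap which you notice but do not close. You set $\theta_4:=\sqrt\tau/R$ and $\theta_2:=\sqrt{\delta\tau}/R$, so all the $\theta_i$ vary with $(\tau,R)$. The \abbr{PHI} of \eqref{eq:harnack} only says that \emph{for each fixed} quadruple $0<\theta_1\le\theta_2<\theta_3\le\theta_4$ there exists $\gamma=\gamma(\theta_i)\in(0,1)$; it does not assert that $\gamma$ is uniformly bounded away from zero as the $\theta_i$ range over a compact set (a continuity/uniformity statement about $\gamma(\theta_i)$ would have to be extracted from the proof of Theorem \ref{harnack}, which you do not do). Remarking that $R/\sqrt\tau$ lies in $[2\varphi,2\varphi+1]$ so that ``the proof goes through'' is an appeal, not an argument. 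The paper sidesteps this entirely: it takes $R=[2\varphi\sqrt\tau]$ and \emph{fixes} $\varphi\theta_1=\sqrt\delta/2$, $\varphi\theta_2=\sqrt\delta/(1+\delta)$, $\varphi\theta_3=1/2$, $\varphi\theta_4=1/(1+\delta)$, then checks that the hypotheses $\varphi\ge1/(1-\delta)$ and $\tau\ge1$ (so that $R+1\ge2\varphi\sqrt\tau\ge2/(1-\delta)$) force $\tau\in((\theta_3R)^2,(\theta_4R)^2]$ and $\delta\tau\in((\theta_1R)^2,(\theta_2R)^2]$. With the $\theta_i$ fixed once and for all, $\gamma$ legitimately depends only on $(\varphi,\delta)$. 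To repair your version you would either have to switch to fixed $\theta_i$ as the paper does, or add an argument that the worst-case $\theta_i$ in your ranges (with the smallest $\theta_1$, largest $\theta_2$, smallest $\theta_3$, largest $\theta_4$) still satisfies the nesting condition $\theta_2<\theta_3$ and that the resulting single $\gamma$ bounds the comparison for all $(\tau,R)$ in range.
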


\begin{remark}\label{d-bar-doubling} From part (b) we have that 
the non-decreasing $\bar D(s) := \pi_t(y)/D_{t-s,t}(y)$
is a doubling function, with $\bar D(0)=1$ such that 
$\bar D(2s) \le C_1 \bar D(s)$ for all $y \in V$ and $2s \le t$.
\end{remark}

\begin{proof} (a). 
From \eqref{def:k-mn} it
immediately follows that for \abbr{dtrw} the non-negative 
$u(k,x)=K_{k,t}(x,z_\star)$ 
satisfies \eqref{dfn:sol-cyl} on $Q(0,t;z,R)$ of \eqref{dfn:cyl}
(with $\partial_{-s} u(s,\cdot) = u(s-1,\cdot)-u(s,\cdot)$).
Similarly, in case of \abbr{csrw}, it follows  
from \eqref{def:k-st} that $s \mapsto u(s,x)=K_{s,t}(x,z_\star)$ is 
an absolutely continuous, 
solution of \eqref{dfn:sol-cyl} 
on $Q(0,t;z,R)$.  
Next,
if $d(x_1,x_2) \le 2R$ for integer $R:= [2 \varphi \sqrt{\tau}] \ge 1$, then 
$x_1,x_2 \in \B(z,R)$ for some $z \in V$. Further, for $\tau \ge 1$ we 
have $R+1 \ge 2 \varphi \sqrt{\tau} \ge 2/(1-\delta)$, hence 
the \abbr{PHI} applies for  
$T=t$, $\tau_2=\tau$ and
$\tau_1=\delta\tau$, with 
$\varphi \theta_1 = \sqrt{\delta}/2$,
$\varphi \theta_2 = \sqrt{\delta}/(1+\delta) < 1/2$,
$\varphi \theta_3 = 1/2$,
$\varphi \theta_4 = 1/(1+\delta)$
and the corresponding $\gamma=\gamma(\theta_i) \in (0,1)$.
\newline
(b). While proving Lemma \ref{gaffney}, we showed that $s \mapsto E_s(u)$ 
of \eqref{def:J_t} is non-decreasing whenever 
$u_s(x) \ge 0$ solves \eqref{dfn:sol-cyl}
for some $u_t \in \cC_0(V)$. By part (a) 
this applies to $s \mapsto D_{s,t}(y)$ which corresponds
to $u_s(x)=K_{s,t}(x,y)$ and $f_s(x) \equiv 1$. Next, from
\eqref{eq:phi-comp} for $x_1=x_2
$ 
we have that 
$K_{t-2s,t}(x,y)\ge \gamma K_{t-s,t}(x,y)$ for 
$\gamma(2,1/2)>0$, all $x,y \in V$, $2s \in [1,t]$,
yielding that 
$C_1 D_{t-2s,t}(y) \ge D_{t-s,t}(y)$ 
for $C_1 = \CB \gamma^{-2}$ finite.
Lowering $\gamma \le \alpha_l$ for lazy \abbr{DTRW}, or
$\gamma \le e^{-1}$ for \abbr{csrw}, we further have
that $K_{t-1,t}(y,y) \ge \gamma
$, hence $C_1 D_{t-1,t}(y) \ge D_{t,t}(y)$, extending
\eqref{eq:D-doubling} to all $2s \in [0,t]$.
\end{proof}

Utilizing the \abbr{ghku} of Theorem \ref{thm-main}(a) 
as well as Lemmas \ref{lem:new-weight}, \ref{lem:mass-conc} 
and \ref{lem:csrw-sol}(b), we next establish a moment generating
bound which is key for getting matching \abbr{ghke}.
\begin{lem}\label{lem:d-mgf}
In the setting of Theorem \ref{thm-main}(b),
for dynamics \eqref{def:k-mn} and \eqref{def:k-st}, the 
functions $\rho_t(\cdot)$
of \eqref{dfn:rho_t} and $D_{s,t}(\cdot)$ of \eqref{dfn:D}, 
some $\theta_0(\gamma,\CV,\alpha_l)>0$ and 
$C_2(\gamma,\CV,\alpha_l)$ finite,   
\begin{align}\label{bound:D-by-J} 
D_{\tau,2\tau}(y;\theta_0) := \sum_{z\in V} \pi_{\tau} (z) K_{\tau,2\tau}(z,y)^2 
e^{\theta_0 \rho_\tau(z,y)} \le C_2 D_{\tau,2\tau}(y) \,, 
\qquad \forall y \in V \,, \; \tau \ge 1 \,.
\end{align}
\end{lem}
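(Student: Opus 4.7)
My plan is to bracket both sides of \eqref{bound:D-by-J} between multiples of $1/v(\sqrt\tau)$:
\[
\frac{c'}{v(\sqrt\tau)} \;\le\; D_{\tau,2\tau}(y) \;\le\; D_{\tau,2\tau}(y;\theta_0) \;\le\; \frac{C'}{v(\sqrt\tau)},
\]
so that $C_2 = C'/c'$. The upper bound is read off from the \abbr{GHKU} of Theorem \ref{thm-main}(a); the lower bound combines the mass-concentration of Lemma \ref{lem:mass-conc} with the doubling of $\bar D$ from Lemma \ref{lem:csrw-sol}(b), linked by a semigroup and Cauchy--Schwarz argument at the intermediate time $3\tau/2$.

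For the upper bound, a two-case inspection on $d(z,y)\lessgtr\tau$ using the explicit form of $I(\cdot)$ gives $\tau I(d(z,y)/\tau) \ge \rho_\tau(z,y)$, whence Theorem \ref{thm-main}(a) yields
\[
K_{\tau,2\tau}(z,y)^2 \, e^{\theta_0\rho_\tau(z,y)} \;\le\; \frac{C^2}{v(\sqrt\tau)^2}\exp\!\big(-(2/C - \theta_0)\rho_\tau(z,y)\big).
\]
Fixing $\theta_0 \in (0,1/C)$ and integrating against $\pi_\tau \le \CV$ shell-by-shell, volume doubling (giving $\pi_\tau(\B(y,\ell)) \le \CV v(\ell)$ and $v(\ell) \le \CV^{O(\log_2(\ell/\sqrt\tau))} v(\sqrt\tau)$ for $\ell \ge \sqrt\tau$) produces $\sum_z \pi_\tau(z) \exp(-c\rho_\tau(z,y)) \le c_\star v(\sqrt\tau)$, just as in the tail bound \eqref{eq:tail-level}. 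Dividing by $v(\sqrt\tau)^2$ delivers $D_{\tau,2\tau}(y;\theta_0) \le C'/v(\sqrt\tau)$ for some $C'(\CV,C,\theta_0)$.

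For the lower bound, estimate \eqref{eq:pos} of Lemma \ref{lem:mass-conc}, applied to the row-stochastic probability distribution $z \mapsto K_{\tau,2\tau}(y,z)$ (whose Gaussian hypothesis is Theorem \ref{thm-main}(a)), supplies $\kappa = \kappa(\CV,C)$ with $K_{\tau,2\tau}(y,\B(y,\kappa\sqrt\tau)) \ge 1/2$. Pigeonholing over the $\le C_\kappa v(\sqrt\tau)$ vertices in that ball yields some $z_\star \in \B(y,\kappa\sqrt\tau)$ with $K_{\tau,2\tau}(y,z_\star) \ge c_1/v(\sqrt\tau)$. Writing the semigroup decomposition $K_{\tau,2\tau}(y,z_\star) = \sum_w K_{\tau,3\tau/2}(y,w)\, K_{3\tau/2,2\tau}(w,z_\star)$ and Cauchy--Schwarzing against $\pi_{3\tau/2}$, the first factor $\sum_w K_{\tau,3\tau/2}(y,w)^2/\pi_{3\tau/2}(w)$ is bounded by $C''/v(\sqrt\tau)$ via $\max K \le C/v(\sqrt\tau)$, $\sum_w K = 1$, and $\pi_{3\tau/2} \ge \CV^{-1}$. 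Rearranging gives $D_{3\tau/2,2\tau}(z_\star) \ge c_2/v(\sqrt\tau)$, which the doubling estimate \eqref{eq:D-doubling} of Lemma \ref{lem:csrw-sol}(b) (applied with $t = 2\tau$, $s = \tau/2$) upgrades to $D_{\tau,2\tau}(z_\star) \ge c_3/v(\sqrt\tau)$.

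The principal obstacle will be transferring the lower bound from $z_\star$ back to the requested point $y$, since non-reversibility precludes any direct symmetric identity between $D_{\tau,2\tau}(z_\star)$ and $D_{\tau,2\tau}(y)$. I plan to close this gap by running the same semigroup/Cauchy--Schwarz routine a second time in the reverse direction -- now with $z_\star$ acting as a source point inside the ball of radius $\kappa\sqrt\tau$ around $y$ -- absorbing the extra factors from this second application into another invocation of Lemma \ref{lem:csrw-sol}(b)'s doubling of $\bar D$. Tracking the dependence of all constants on the PHI constant $\gamma$ (entering via $C_1$ in \eqref{eq:D-doubling}), on $\CV$, and on the laziness $\alpha_l$ in the discrete-time case, produces the stated $C_2(\gamma,\CV,\alpha_l)$.
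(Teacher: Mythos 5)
Your reduction is sound in its first half: the bound $D_{\tau,2\tau}(y;\theta_0)\le C'/v(\sqrt\tau)$ for $\theta_0<2/C$ does follow from $\tau I(d(z,y)/\tau)\ge\rho_\tau(z,y)$, the \abbr{GHKU} \eqref{ghku-pi}, and the shell summation as in Lemma \ref{lem:mass-conc}. The problem lies entirely in the proposed lower bound $D_{\tau,2\tau}(y)\ge c'/v(\sqrt\tau)$, and the obstacle you flag at the end is not one your fix can clear.

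Your chain correctly yields $D_{\tau,2\tau}(z_\star)\gtrsim 1/v(\sqrt\tau)$ for \emph{some} $z_\star\in\B(y,\kappa\sqrt\tau)$ selected by pigeonhole from the row $K_{\tau,2\tau}(y,\cdot)$. This controls the \emph{column} indexed by $z_\star$, but you need the column indexed by $y$. The \abbr{PHI} of \eqref{eq:phi-comp} applies to backward solutions $u(s,x)=K_{s,t}(x,z_\star)$ and hence compares different \emph{sources} $x_1,x_2$ for one fixed \emph{target} $z_\star$; it says nothing relating $K_{s,t}(\cdot,z_\star)$ to $K_{s,t}(\cdot,y)$. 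In the reversible case these two comparisons coincide, but here the lack of reversibility is precisely the obstruction, and no forward-in-time Harnack inequality is available. Running ``the same routine in reverse'' starting from $z_\star$ merely re-invokes mass concentration and pigeonholing to produce yet another point $z_\star'$; it never returns to $y$. Worse, a uniform bound $D_{\tau,2\tau}(y)\gtrsim 1/v(\sqrt\tau)$ is a strictly stronger claim than the relative comparison the lemma asks for: via \eqref{eq:improv3} and the doubling of $D$, it would force $\inf_{y,\tau}\mu_{0,2\tau}(y)>0$, which is Conjecture \ref{conj-stab} and is left open by the paper. So the absolute-lower-bound route is not just delicate — it is very likely out of reach with the tools at hand at this stage.

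The paper's proof takes a genuinely different, \emph{relative} route that avoids any sharp lower bound on $D_{\tau,2\tau}(y)$. It introduces the tail quantities $J(\ell,s)$ of \eqref{dfn:I-sr}, reduces \eqref{bound:D-by-J} to the ratio bound $J(\ell,\tau)\le C_2'\,J(0,\tau)\,e^{-2\theta_0\ell(\frac{\ell}{\tau}\wedge 1)}$, and proves the latter by iterating the weighted maximum principle of Lemma \ref{lem:new-weight} over shrinking scales $(\ell_j,s_j)$ in the style of \cite[Prop.\ 5.4]{CGZ}. In that argument one only needs the crude \emph{polynomial} lower bound $J(0,\tau)\ge (C_1\CV)^{-1}\tau^{-c_1}$, obtained from the \abbr{PHI}-induced doubling of $\bar D$ in Lemma \ref{lem:csrw-sol}(b) together with $D_{t,t}(y)=\pi_t(y)\ge\CV^{-1}$, and the Gaussian tail $J(R,s)\le c'e^{-R/(2C)}$ of Lemma \ref{lem:mass-conc} beats that polynomial. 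In short: the paper never pins down the absolute size of $D_{\tau,2\tau}(y)$, and hence never needs the transfer from $z_\star$ to $y$ that blocks your approach.
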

\begin{proof} Fixing $y \in V$, $\tau \ge 1$ and $t=2\tau$, let 
\begin{equation}\label{dfn:I-sr}
J(\ell,s):=\sum_{z \in V} \pi_{t-s}(z) K_{t-s,t}(z,y)^2 {\bf 1}_{\{d(z,y) \ge \ell\}}
\,, \qquad  
\ell \ge 0, \; s \in [0,t] \,.
\end{equation}
Since $\partial_{-\ell} \{\ell (\frac{\ell}{\tau} \wedge 1)\} \ge -2 \ell/\tau$ 
and $1-e^{w} \le -w$, it follows after summation by parts that 
\[
D_{\tau,2\tau}(y;\theta) = \sum_{\ell=0}^\infty
e^{\theta \ell (\frac{\ell}{\tau} \wedge 1)} (J(\ell,\tau)-J(\ell+1,\tau))
\le J(0,\tau) + 2 \theta \sum_{\ell=1}^\infty J(\ell,\tau) \frac{\ell}{\tau} 
e^{\theta \ell (\frac{\ell}{\tau} \wedge 1)} \,.
\]
With $J(0,\tau)=D_{\tau,2\tau}(y)$ we thus get \eqref{bound:D-by-J}
upon showing that 
\begin{equation}\label{bd:I-taur}
J(\ell,\tau) \le C_2' J(0,\tau) e^{-2 \theta_0 \ell (\frac{\ell}{\tau} 
\wedge 1)} \,,
\end{equation}
for some $\theta_0(\gamma,\CV,\alpha_l)$ positive,
$C_2'(\gamma,\CV,\alpha_l)$ finite and all 
$\ell \ge \ell_0(\gamma,\CV,\alpha_l)$ finite, whereupon 
taking $C_2' \ge e^{2 \kappa \theta_0}$ it suffices 
to show \eqref{bd:I-taur} for $\ell > \sqrt{\kappa \tau}$. To this end,
we proceed by adapting the proof of \cite[Prop. 5.4]{CGZ} to 
handle both time-varying $\pi_t(\cdot)$ and the \abbr{csrw}
(see also \cite[Lemma 4.1]{Fo} for  
\abbr{csrw} with constant 
conductances). 
First apply Lemma \ref{lem:new-weight} for 
$u_s(\cdot):=K_{t-s'+s,t}(\cdot,y)$, 
with $u_{s'} \in \cC_0(V)$,
and the 
Lipschitz function $\rho(\cdot;\ell'):=\ell'+ 1 - d(\cdot,y) \wedge \ell'$, 
to deduce that for some $\eta=\eta(\alpha_l)>0$,
any $s' \in [0,t]$, $\ell' \ge 0$ and $y \in V$,
\begin{align*}
s \mapsto E_{s}(\ell',s'):= \sum_{z\in V} \pi_{t-s'+s}(z) K_{t-s'+s,t}(z,y)^2 e^{-\eta (s+1) I\big( \rho(z;\ell')/(s+1)\big)} \,,
\end{align*}
is non-decreasing on $[0,s']$. Further, $I(1)=1$ and 
$\rho(z;\ell')=1$ whenever 
$d(z,y) \ge \ell'$ while $\rho(z;\ell') \ge \ell'-\ell$ whenever 
$d(z,y) < \ell \le \ell'$. Hence, for any $s \in [0,s']$ and $\ell \le \ell'$,
\begin{align}
e^{-\eta} J(\ell',s')\le E_{0}(\ell',s') \le E_{s'-s}(\ell',s') 
\le J(\ell,s) + J(0,s) e^{-\eta (s'-s+1) 
I\big(\frac{\ell'-\ell}{s'-s+1}\big)} \,.
\label{iter-I}
\end{align}
We then get \eqref{bd:I-taur} upon recalling
Remark \ref{d-bar-doubling} that due to the \abbr{phi} the function 
$s\mapsto \bar D(s) = J(0,0)/J(0,s)$ is non-decreasing and $C_1$-doubling, 
hence regular in the sense of \cite[Def. 5.1]{CGZ}.
Indeed, \cite[(5.11)]{CGZ} is derived from  
\cite[(5.10)]{CGZ} by iterating \eqref{iter-I} for
consecutive terms of  
the sequence
$\ell_j = \ell/2 + \ell/(j+1)$, $s_j = \tau 2^{-(j-1)}$, 
starting at $(\ell,\tau)$ when $j=1$, and stopping 
at $j_0 := \min\{j \ge 1: \ell_j > s_j \}$ (since in 
their case, of \abbr{dtrw}, one has 
that $J(R,s)=0$ whenever $R > s$).
For such parameters $(\ell'-\ell)/(s'-s+1) \le \ell'/s' \le 1$,
hence $I(r)=r^2$ (even for \abbr{csrw}), and taking 
$\kappa \ge \theta_0^{-1} ( \log (2 C_1)+ \eta)$ makes \cite[CASE 1]{CGZ} 
hold here as well. Thus, the only difference is that for the \abbr{csrw} 
we still have to bound the last term 
of the iteration,
$e^{j_0 \eta} J(\ell_{j_0},s_{j_0})$, by
the \abbr{rhs} of \eqref{bd:I-taur}. For this task 
apply Lemma \ref{lem:mass-conc} to 
$\Gamma_s(z):=\pi_{t-s}(z) K_{t-s,t}(z,y)^2$, in which 
case \eqref{eq:tail-level} amounts to 
\[
J(R,s) \le c' e^{- R/C} \,, \qquad 
\forall R \ge s \ge 1 
\]
(for $C$ of \eqref{ghku-pi} and $c'(\CV,C)$ finite). Next, recall 
\eqref{eq:D-doubling} that 
$J(0,\tau) \ge (C_1 \CV)^{-1} \tau^{-c_1}$ for some $c_1$ 
finite and all $\tau \ge 1$. With $\ell_{j_0} \ge s_{j_0} \vee \ell/2$ and
$e^{j_0 \eta} \le \tau^{\eta/\log 2}$ (since $s_{j_0} \ge 1$), 
it thus follows that for $c_1'=c_1 + \eta/\log 2$ and some $C_1'$ finite,
\[
e^{j_0 \eta} J(\ell_{j_0},s_{j_0}) 
\le C_1' \, J(0,\tau) \, \tau^{c_1'} \, e^{-\ell/(2C)} \,,
\]
which for $\ell \ge \sqrt{\tau}$ and $\theta_0 \le (5C)^{-1}$, is 
further bounded by the \abbr{rhs} of \eqref{bd:I-taur}.
\end{proof}

\noindent
\emph{Proof of Theorem \ref{thm-main}(b).}
Proceeding to derive the matching \abbr{ghke} of \eqref{ghke}, 
since our assumptions apply for $(K_{s+r},\pi_{s+r}): r \in [0,t-s] \}$,
it suffices to do so for $s=0$ and fixed $x,y \in V$ such that 
$d(x,y) \le t$. 
\newline
$\bullet$ {\bf Step I: Improved \abbr{ghku}.}
Recall the \abbr{GHKU} \eqref{ghku-pi} implying that 
\eqref{eq:mod-ghku} holds for $K_{s,s+\tau}(x,\cdot)$ with 
$C<\infty$ independent of $s,\tau \ge 0$
and $x \in V$. Further, by the triangle inequality, we have for the 
non-increasing $t \mapsto \rho_t(x,z)$ of \eqref{dfn:rho_t} that 
\begin{equation}\label{eq:tri-rho}
\frac{1}{2} \rho_t(x,y) \le \rho_t(x,z) + \rho_t(y,z) \,, \qquad 
\forall z \in V, t \ge 0 \,.
\end{equation}
Hence, 
setting $t=2\tau$, $\tau \ge 1$ and 
$\theta_1 = \frac{1}{2} (\theta_0 \wedge C^{-1})$,
by Chapman-Kolmogorov and \eqref{eq:tri-rho}, followed by  
Cauchy-Schwartz, \eqref{bound:D-by-J}, Lemma \ref{lem:d-mgf}
and the inequality \eqref{eq:D-doubling}, we arrive at
\begin{align}
K_{0,t}(x,y)^2 e^{\theta_1 \rho_t(x,y)}
& \le \Big[ \sum_{z\in V} K_{0,\tau}(x,z)e^{\theta_1 \rho_t(x,z)}
K_{\tau,t}(z,y) e^{\theta_1 \rho_t(y,z)} \Big]^2 \nonumber\\
&\le \CV
D_{\tau,t}(y;2\theta_1)
\sum_{z\in V} K_{0,\tau}(x,z)^2e^{2\theta_1 \rho_\tau(x,z)}
\nonumber \\ 
& \le \CV C_2 D_{\tau,t}(y) \frac{C}{v(\sqrt{\tau})} 
\sum_{z\in V} K_{0,\tau}(x,z) \le \frac{C'}{v(\sqrt{t})} D_{0,t}(y) \,,  
\label{eq:c-s}
\end{align}
where $C' = \CV^2 C_2 C_1 C$.
Applying the same argument on $[\tau,2\tau]$ instead of $[0,t]$,
yields that for any $y \in V$ and $\tau \ge 1$,
\[
\Gamma_\tau(z) := \frac{\pi_{\tau}(z) K_{\tau,2\tau}(z,y)^2}
{D_{\tau,2\tau}(y)} 
\le \frac{\CV C'}{v(\sqrt{\tau})} e^{-\theta_1 \rho_{\tau}(z,y)} \,.
\]
In view of \eqref{dfn:D}, these $\Gamma_\tau(\cdot)$ are 
probability measures on $V$, hence by Lemma \ref{lem:mass-conc} there
exists $\kappa(\CV, \CV C' \vee \theta_1^{-1},1)$ finite such that
for $R=\kappa \sqrt{\tau}$, 
\begin{equation}\label{eq:improv1}
\Gamma_\tau(\B(y,R)) D_{\tau,t}(y) 
\ge \frac{1}{2} D_{\tau,t}(y) \ge \frac{1}{2 C_1} D_{0,t}(y)
\end{equation}
(using the \abbr{rhs} of \eqref{eq:D-doubling} 
for the last inequality). By the 
definition \eqref{evol-meas}
and Lemma \ref{lem:csrw-sol}(a) (at
$x_1=z$, $x_2=x$, $z_\star=y$),
we have for $\gamma(\kappa/2,1/2)>0$ and $R$ as above,
\begin{equation}\label{eq:improv2}
\Big[ \frac{\mu_{0,t}(y)}{\pi_0(\B(y,R))} \Big]^2 \ge 
\inf_{x \in \B(y,R)} \{ K_{0,t}(x,y)^2 \} 
\ge \gamma^2 
D_{\tau,t}(y) \frac{\Gamma_\tau(\B(y,R))}
{\pi_\tau(\B(y,R))} 
\end{equation}
(where for \abbr{dtrw} we restrict to $\tau \ge 2 R$). 
In view of the assumed uniform volume growth with $v(r)$ doubling,
from \eqref{eq:improv1} and \eqref{eq:improv2} it follows that 
for some $C_3(\CV,C_1,\kappa)$ finite,
\begin{equation}\label{eq:improv3}
\mu_{0,t}(y)^2 \ge \frac{\gamma^2}{2C_1 \CV^3} 
 v(R) D_{0,t}(y) 
\ge C_3^{-1} v(\sqrt{t}) D_{0,t}(y) \,.
\end{equation}
For \abbr{dtrw}, our derivation of \eqref{eq:improv3} required 
$t \ge 2 (2 \kappa)^2$, but with $K_t(\cdot,\cdot)$ 
uniformly elliptic, one easily extends
\eqref{eq:improv3} to all $t \ge 0$
upon increasing $C_3$ to some $C_3(\bar \alpha)$ finite.
Finally, combining \eqref{eq:c-s} and \eqref{eq:improv3} we have 
that for 
$C_\star := \sqrt{C_3 C'} \vee (2/\theta_1)$ finite,
\begin{align}\label{eq:ubd-interm}
K_{0,t}(x,y) \le \frac{C_\star \mu_{0,t}(y)}{v(\sqrt{t})}
e^{-\rho_t(x,y)/C_\star} \,,
\end{align}
as stated in the \abbr{rhs} of \eqref{ghke}.

\noindent
$\bullet$ {\bf Step II: matching \abbr{GHKL}.}
With \eqref{eq:ubd-interm} holding for $K_{s,s+t}(\cdot,\cdot)$,
$s \ge 0$, it yields the bound \eqref{eq:pos} for $b=1/2$, the
probability measures $K_{s,s+\tau}(x,\cdot)$ and 
$\Gamma_{\tau}(\cdot):=\pi_{s} (\cdot) 
K_{s,s+\tau}(\cdot,y)/\mu_{s,s+\tau}(y)$,
some $\kappa(\CV,C,b) \ge 2$, all $x,y \in V$, $s \ge 0$ and $\tau\ge b$.
Fixing $\varphi \ge 2(1+\kappa^2)$, $\delta=
1/2$ and $\gamma
\in (0,1)$ as in 
\eqref{eq:phi-comp}, we further have that for all 
$x,y \in V$, $t \ge \tau \ge 1$ and
$r \le 2 \varphi \sqrt{\tau}$ (with $4 r \le \tau \in \N$ in case
of \abbr{dtrw}),
\begin{align}\label{eq:dlbd}
K_{t-\tau,t}(x,y) \ge \gamma \sup_{z \in \B(x,2r)} 
\{ K_{t-\delta \tau,t} (z,y) \} \,.
\end{align}
Setting $n_\star = 1$ for \abbr{csrw} and
$n_\star = \lceil  (8 \varphi)^2 \rceil$ for \abbr{dtrw},
\eqref{eq:dlbd} applies when $r=[2 \varphi \sqrt{\tau}]$ and
$\tau \ge n_\star$. Further, if 
$d(x,y) \le r$ then $\B(y,r) \subseteq \B(x,2r)$,
so $r=[2 \varphi \sqrt{\tau}] \ge \kappa \sqrt{\tau}$ yields 
by \eqref{eq:pos}  
\begin{align}\label{eq:dlbd2}
K_{t-\tau,t}(x,y) \ge \frac{\gamma}{\pi_{t-\delta \tau}(\B(y,r))}
\sum_{z \in \B(y,r)} \pi_{t-\delta \tau} (z) K_{t-\delta\tau,t}(z,y) \ge 
\frac{\gamma \, \mu_{t-\delta \tau,t}(y)}{2 \CV v(2 \varphi\sqrt{\tau})} \,.
\end{align}
With $v(\cdot)$ volume doubling and
$s \mapsto \mu_{s,t}(y)$ is non-decreasing, taking 
$\tau=t \ge n_\star$ in \eqref{eq:dlbd2} yields the
\abbr{ghkl} for near-diagonal 
$d(x,y) \le 2 \varphi \sqrt{t}$. It 
extends to all $d(x,y) \le t < n_\star$ since only $y=x$ is 
relevant for $t<1$ (and the \abbr{ghkl} then trivially 
holds), and for \abbr{DTRW} having uniformly elliptic 
conductances implies that $K_{0,t}(x,y) \ge (\alpha_e)^{n_\star}$  
whenever $d(x,y) \le t  < n_\star$.

Considering hereafter 
$d(x,y) \in [2 \varphi \sqrt{t},t]$ and $t \ge n_\star$,
fix integers $R=[2 \varphi t/d(x,y)] \ge 
4$ 
and $\ell=\lceil d(x,y)/R \rceil \ge 
4$. We
further find $x_i \in V$ with $x_0=x$ and $x_\ell=y$  
such that $d(x_i,x_{i+1}) \le R$ for $0 \le i \le \ell-1$. Setting 
$\tau = t/\ell \ge 2$ 
(or its integer part for the \abbr{dtrw}), let 
$t_0=0$, $t_{2\ell}=t$ and $t_{2i-1}:=(t_{2i}+t_{2(i-1)})/2$,
with $t_{2i}-t_{2(i-1)}=\tau$ for \abbr{csrw}, or
in $\{\tau,\tau+1\}$ for \abbr{dtrw} (as needed).
It is easy to check that $\kappa \sqrt{\tau} \le R 
\le 2 \varphi \sqrt{\tau}
$,
and further that the extra requirement $4 R \le \tau \in \N$ which we need
in case of the \abbr{dtrw}, holds whenever $d(x,y) \le t/6$. For
such $x,y,t$
we get by Chapman-Kolmogorov and \eqref{eq:dlbd} followed by \eqref{eq:pos}, 
that 
\begin{align*}
K_{0,t}(x,y) &\ge \sum_{\{
z_i \in \B(x_i,2R)\}} 
K_{0,t_2}(x,z_1)
\Big[\prod_{i=2}^{\ell-1} K_{t_{2(i-1)},t_{2i}}(z_{i-1},z_i) 
\Big] K_{t_{2(\ell-1)},t}(z_{\ell-1},y)  \\
\ge & {\gamma}^{\ell-1}
\Big[ \sum_{z \in \B(x,R)} K_{0,t_2}(x,z) \Big] 
 \prod_{i=2}^{\ell-1} 
\Big[ \sum_{z \in \B(x_{i-1},R)} K_{t_{2i-1},t_{2i}}(x_{i-1},z) 
\Big] \, K_{t_{2\ell-1},t}(x_{\ell-1},y) 
\\
\ge & 
\left(\frac{{\gamma}}{2}\right)^{\ell-1} 
K_{t_{2\ell-1},t}(x_{\ell-1},y) \ge \left(\frac{\gamma}{2}\right)^\ell \frac{\mu_{t-\delta \tau',t}(y)}{\CV 
v(2 \varphi \sqrt{\tau'})} \,,
\end{align*}
where $d(x_{\ell-1},y) \le R$ so the last inequality 
is merely \eqref{eq:dlbd2} 
for $\tau':=t- t_{2\ell-1} \in [\tau/2,\tau]$ and 
$\kappa \sqrt{\tau'} \le R \le 2 \varphi \sqrt{\tau'}$.
Consequently, with $\tau \le t$, $v(\cdot)$ volume doubling
and $\ell \ge d(x,y)^2/(2 \varphi t)$, 
\begin{align}\label{eq:ghkl-mu}
K_{0,t}(x,y) &\ge
\left(\frac{\gamma}{2}\right)^{\ell} \frac{\mu_{0,t}(y)}{\CV
v(2 \varphi \sqrt{t})}
\ge \frac{\mu_{0,t}(y)}{C v(\sqrt{t})}\exp\{-C d(x,y)^2/t\}\,,
\end{align}
for some $C=C(\CV,\gamma,\varphi)$ finite. Note that for
$d(x,y)/t \in [1/6,1]$ and uniformly elliptic \abbr{dtrw}
we have that
$K_{0,t}(x,y) \ge (\alpha_e)^t \ge e^{-C d(x,y)^2/t}$,  
where $C = 1 \vee 36 \log (1/\alpha_e)$.
Recalling that $\mu_{0,t}(y) \le 
1 \le C v(\sqrt{t})$ 
this extends the validity of  \eqref{eq:ghkl-mu} to all 
$d(x,y) \le t$.
\qed

\end{section}

\begin{section}{The perturbative regime}\label{sect:proof-ppn}
\noindent
For $\{K_t\}$ of \eqref{n-kernel}
the transition kernels $\{K_{s,t}\}$ are unchanged by 
re-scaling the conductances 
\begin{equation}
\wh{\pi}_{u;v}(x,y)=e^{a_u-a_v} \, \pi_u(x,y)\,\quad\qquad\forall (x,y)\in E 
\label{dfn:pi-hat}
\end{equation}
In particular, for $\{a_t\}$ of \eqref{pertur-regime} one has
that $a_{u'}-a_u \ge \rho_{\bpi}(u',u)$ for all $u' \ge u$ and 
hence $u \mapsto \wh{\pi}_{u;v}(x)$ is non-decreasing 
(for each $x \in V$). More generally, working under 
the framework of Example \ref{inc-ex}, the Nash profiles re-scale as 
\[
\cN_{Q_u,\wh{\pi}_{u;v}}(\sss)=\cN_{Q_u,\pi_u}(e^{a_v-a_u} \, \sss),
\]
yielding an on-diagonal transition density upper bound
when $u \mapsto \wh{\pi}_{u;v}(x)$ are non-decreasing and
\eqref{cond:pert} holds. Also, if \eqref{cond:pert} applies
for $a_t$ of \eqref{pertur-regime}, then
$\mu_{s,t}(\cdot)$ of \eqref{evol-meas} are uniformly 
bounded below provided $s/t$ is.
\begin{lem}\label{per-n} 
Consider $\{K_t\}$ of \eqref{n-kernel} with non-decreasing 
$u \mapsto \wh{\pi}_{u;v}(x)$ of \eqref{dfn:pi-hat},
such that the non-decreasing $t \mapsto a_t$ satisfies \eqref{cond:pert}. 
\newline
(a). Suppose $c_\star := \inf_{t,x} \{\pi_t(x)\} > 0$ and 
for some finite $\CN$, $\sss_0$ and 
non-decreasing $N(\cdot)$, 
\begin{equation}
N(\sss)\ge\sup_t\{\cN_{K_t^2,\pi_t}(\sss)\}\,, \qquad 
\inf_{\sss \ge \sss_0} \Big\{ \frac{N(\CN\,\sss)}{N(\sss)} \Big\} \ge 2  \,.
\label{eq:nash-unif-bd}
\end{equation}
Then, for the dynamic \eqref{def:k-mn},  
$\psi(\st)=1/F^{-1}(\st;c_\star,N(\cdot))$, 
$\CN'=\CN'(\CN,\sss_0/c_\star)$ and
$F(\cdot)$ of \eqref{eq:N-psi},
\begin{align}
\sup_{x,y \in V} \Big \{ \frac{K_{s,t}(x,y)}{\pi_t(y)} \Big\} 
\le e^{2A}\CN'
\psi\Big(\frac{t-s}{6}\Big) \,, \qquad \forall s \in [0,t] \,.
\label{eq:psi-h}
\end{align}
For the dynamic \eqref{def:k-st}, replace 
$\cN_{K_t^2,\pi_t}(\sss)$ by $2 \cN_{K_t,\pi_t}(\sss)$ 
in \eqref{eq:nash-unif-bd} and
$\psi(\frac{t-s}{6})$ 
on the \abbr{RHS} 
of \eqref{eq:psi-h} 
by $\E [\psi(Z)]$, where $Z \sim \frac{1}{3}\rm{Poisson}(t-s)$.
\newline
(b). If $\{a_t\}$ of \eqref{pertur-regime} satisfies \eqref{cond:pert}, 
then for $\mu_{s,t}$ of \eqref{evol-meas}, under
either 
\eqref{def:k-mn} or \eqref{def:k-st}, 
\begin{equation}
\mu_{s,t} (y)\ge 
e^{-\gamma A} \pi_t(y)\,,
\quad \qquad \forall \, y\in V, \; \gamma \in \N\,,
\;\;\;\; (t+1) \le 2^\gamma (s+1) \,.
\label{eq:ref-meas}
\end{equation}
\end{lem}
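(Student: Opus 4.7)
For part (a), the plan is to apply Proposition \ref{nash-inc-gen} on a truncated time interval using rescaled reference measures. Since \eqref{dfn:pi-hat} leaves $K_{s,t}$ unchanged while $u \mapsto \wh{\pi}_{u;t}(x)$ is non-decreasing by hypothesis, I fix $u_\star := \lfloor (s+t)/2 \rfloor$ so that $2 u_\star + 1 \ge t$, ensuring $a_t - a_{u_\star} \le A$ by \eqref{cond:pert}. On $[u_\star, t]$ I choose reference measures $\nu_u := \wh{\pi}_{u;t} = e^{a_u - a_t}\pi_u$, which are non-decreasing in $u$ and remain reversing for $K_u$, placing us in the framework of Example \ref{inc-ex}. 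Direct inspection of \eqref{nash-profile} yields $\cN_{K_u^2, \nu_u}(\sss) = \cN_{K_u^2, \pi_u}(\sss e^{a_t - a_u}) \le N(\sss e^A)$ for $u \in [u_\star, t]$, preserving \eqref{eq:regular-nash} with the same $\CN, \sss_0$. Applying Proposition \ref{nash-inc-gen} on this truncated interval and performing a change of variables in \eqref{eq:N-psi} turns the shifted profile into the relation $\wt\psi(\cdot) \le e^A \psi(\cdot)$, while the lower bound $\underline{\nu_{u_\star}} \ge e^{-A} c_\star$ degrades $\CN'$ by at most another factor of $e^A$ (since $\CN'$ depends sub-linearly on $\sss_0/c_\star$, as seen in the proof of Proposition \ref{nash-inc-gen}). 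Since $(t - u_\star)/3 \ge (t-s)/6$ and $\psi$ is non-increasing, one gets $\sup_{x,y}\{K_{u_\star,t}(x,y)/\pi_t(y)\} \le e^{2A} \CN' \psi((t-s)/6)$, and Chapman--Kolmogorov together with $\sum_z K_{s, u_\star}(x, z) = 1$ upgrades this to the same bound for $K_{s,t}(x,y)/\pi_t(y)$, establishing \eqref{eq:psi-h}. The \abbr{csrw} version follows via the coupling of Theorem \ref{nash-inc}(b): representing \abbr{csrw} as a mixture over a Poisson process of discrete chains (with uniformly lazy censored transitions) replaces $\psi((t-s)/6)$ by $\E[\psi(Z)]$ with $Z \sim \tfrac13\mathrm{Poisson}(t-s)$.

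For part (b), the plan is a one-step iteration combined with a reversibility trick. Expanding $K_{s,t} = K_{s+1} K_{s+1,t}$ and using $\pi_{s+1}$-reversibility of $K_{s+1}$,
\[
(\pi_s K_{s+1})(y) = \pi_{s+1}(y) \sum_x \frac{\pi_s(x)}{\pi_{s+1}(x)} K_{s+1}(y, x).
\]
From \eqref{pertur-regime} one has $a_{s+1} - a_s \ge \rho_{\bpi}(s, s+1) \ge |\log(\pi_{s+1}(x)/\pi_s(x))|$ uniformly in $x$, hence $\pi_s(x)/\pi_{s+1}(x) \ge e^{-(a_{s+1} - a_s)}$ pointwise. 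Combined with $\sum_x K_{s+1}(y, x) = 1$, this gives the one-step bound $\pi_s K_{s+1} \ge e^{-(a_{s+1} - a_s)} \pi_{s+1}$. Iterating from $s$ up to $t$ (using non-negativity of $K_{u,t}$ to propagate each inequality by left multiplication) yields $\mu_{s,t}(y) = (\pi_s K_{s,t})(y) \ge e^{-(a_t - a_s)} \pi_t(y)$. Iterated application of \eqref{cond:pert} gives $a_{2^\gamma(s+1)-1} \le a_s + \gamma A$, so the hypothesis $(t+1) \le 2^\gamma(s+1)$ forces $a_t - a_s \le \gamma A$, completing \eqref{eq:ref-meas}. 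For \abbr{csrw}, the probabilistic representation in Remark \ref{mon-mu-m-n} writes $\mu_{s,t}(y)$ as an expectation over Poisson jump times $s < T'_1 < \cdots < T'_N \le t$ of the discrete product $\pi_s K_{T'_1} \cdots K_{T'_N}(y)$; the discrete-time argument gives the pathwise bound $\ge e^{-(a_{T'_N} - a_s)} \pi_{T'_N}(y)$ (with the $N=0$ case trivial), and a further use of \eqref{pertur-regime} applied to the pair $(T'_N, t)$ gives $\pi_{T'_N}(y) \ge e^{-(a_t - a_{T'_N})} \pi_t(y)$, so the pointwise lower bound $e^{-(a_t - a_s)} \pi_t(y)$ holds in every realization.

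The main obstacle is part (a): the midpoint split is forced by two competing demands. On the one hand the Nash profile shift $\sss \mapsto \sss e^{a_t - a_u}$ must be controlled uniformly over $u$, which caps the admissible range to $u \ge u_\star$ with $a_t - a_{u_\star} \le A$; on the other hand one needs the truncated length $(t-u_\star)/3$ to be at least $(t-s)/6$ to invoke the monotonicity of $\psi$. The choice $u_\star = \lfloor(s+t)/2\rfloor$ is the natural one simultaneously satisfying both, and the final constant $e^{2A}$ decomposes into one factor $e^A$ from the shifted profile ($\wt\psi \le e^A \psi$) and one from the degraded base constant ($\wt\CN' \le e^A \CN'$). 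The routine verification of all these rescaling identities via \eqref{eq:N-psi} is where the book-keeping is most delicate.
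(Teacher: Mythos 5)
Your proof is correct and takes essentially the same approach as the paper: split at the midpoint, rescale conductances so the measures become non-decreasing, apply the Nash-profile on-diagonal bound, and for part (b) reverse one step at a time and iterate (plus the Poisson representation from Remark \ref{mon-mu-m-n} for \abbr{CSRW}). The only cosmetic difference is that you rescale by $\wh{\pi}_{u;t}$ rather than the paper's $\wh{\pi}_{u;v}$ (with $v$ the midpoint); the latter keeps $N$, $c_\star$, $\sss_0$, and hence $\CN'$ and $\psi$, completely unchanged and collects the entire $e^{2A}$ factor at the end from $\|K_{v,t}\|_{L^1(\pi_t)\to L^\infty}=e^{a_t-a_v}\|K_{v,t}\|_{L^1(\wh{\pi}_{t;v})\to L^\infty}$, which sidesteps the (slightly over-estimated but harmless — in fact, $\wt\sss_0/\wt c_\star$ is unchanged, so $\CN'$ does not degrade at all) $\CN'$-degradation accounting in your version.
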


\begin{proof} (a). Fixing $s \in [0,t]$, set 
$v=(t+s)/2$, or its integer part in case of \abbr{dtrw}.
Note that $t-v \ge (t-s)/2$ and $(t+1) \le 4(v+1)$. Further, with 
$v \in [s,t]$, we have that $K_{s,t}=K_{s,v} K_{v,t}$
and since the Markov kernel $K_{s,v}$ is an $L^{\infty}$-contraction, 
the \abbr{lhs} of \eqref{eq:psi-h} is bounded above by 
$\Vert K_{v,t} \Vert_{L^{1}(\pi_t)\to L^{\infty}(\pi_v)}$. To 
bound the latter quantity, consider for $u \ge v$ the non-decreasing  
$u \mapsto \wh{\pi}_{u;v} (\cdot)$. 
Since $\underline{\wh{\pi}_{u;v}}\ge\underline{\pi_u} \ge c_\star$ and
\[
\cN_{K_u^{2},\wh{\pi}_{u;v}}(\sss)=\cN_{K_u^{2},\pi_u}(e^{a_v-a_u} \, 
\sss)\le N(\sss) \,,
\]
applying Theorem \ref{nash-inc}(a) for the 
dynamics \eqref{def:k-mn} and 
$\left\{ (K_u,\wh{\pi}_{u;v}), u \in [v,t] \right\}$, we have that 
\begin{align}\label{pert-1-infty}
\left\Vert K_{v,t}\right\Vert _{L^{1}(\pi_t)\to L^{\infty}(\pi_v)}
&=
e^{a_t-a_v} \,
\left\Vert K_{v,t}\right\Vert _{L^{1}(\wh{\pi}_{t;v})\to L^\infty(\wh{\pi}_{v;v})} 
\le
e^{2A} \, \CN'
\psi\Big(\frac{t-v}{3}\Big) 
\end{align}
yielding \eqref{eq:psi-h}. Replacing Theorem \ref{nash-inc}(a) 
by Theorem \ref{nash-inc}(b), the analogous argument applies for the 
dynamic \eqref{def:k-st}.

\smallskip
\noindent
(b). Fixing $x_s=x$ and $x_t=y$,
we have from \eqref{n-kernel} that for the dynamic \eqref{def:k-mn}, 
\begin{align}
K_{s,t}(x,y) 
& =\sum_{\{x_{s+1},\ldots,x_{t-1}\}}\;\prod_{r=s+1}^{t}\frac{\pi_{r}(x_{r})}{\pi_{r}(x_{r-1})}K_{r}(x_{r},x_{r-1})
  \ge\eta_s(t)\frac{\pi_t(y)}{\pi_s(x)}\big[K_t\cdots K_{s+1}\big](y,x)\,,\label{eq:revers} 
\end{align}
where under \eqref{pertur-regime},
\[
\eta_s(t) :=\prod_{r=s}^{t-1}
\inf_{z\in V}\Big\{\frac{\pi_{r}(z)}{\pi_{r+1}(z)}\Big\}\ge
\prod_{r=s}^{t-1} e^{-(a_{r+1}-a_r)} = 
e^{-(a_t-a_s)} \,.
\]
Multiplying 
\eqref{eq:revers} by $\pi_s(x)$ and
summing over $x$ we see that 
$\mu_{s,t}(y)/\pi_t(y) \ge e^{-(a_t-a_s)} 
$, which by \eqref{cond:pert} is further bounded below by
$e^{-\gamma A}$ whenever $(t+1) \le 2^\gamma (s+1)$
(see \eqref{doub:pert}).
Next, 
recall Remark \ref{mon-mu-m-n} that $\mu_{s,t}$ of 
the \abbr{csrw} is the expected value over
$L
\!\!\sim$Poisson($t-s$) 
and jump times $s=T_0' < T'_{1} < \cdots < T'_L \le t$ of the
value $\mu^{(\omega)}_{0,L}$ for the \abbr{dtrw}
using $\{(K_{T'_m},\pi_{T'_m})\}$. Since 
$a_{T'_{m+1}}-a_{T'_m} \ge \rho_{\bpi} (T'_{m+1},T'_m)$ 
for all $m \in \N$, due to \eqref{pertur-regime}, 
by the preceding argument, for each $\omega$,
\[
\mu_{0,L}^{(\omega)}(y) \ge e^{-(a_{T'_L}-a_{T'_0})} \pi_{T'_L}(y)
\ge e^{-(a_t-a_s)} \pi_t(y) \,.
\]
Thus, having \eqref{cond:pert} for $t \mapsto a_t$, implies that  
\eqref{eq:ref-meas} holds also for the dynamic \eqref{def:k-st}.
\end{proof}

\smallskip
\noindent
{\emph {Proof of Proposition \ref{perturbative}.}} 
(a)
Recall from the proof of Lemma \ref{P+VG-to-Nash} that for 
\abbr{csrw} or uniformly lazy \abbr{dtrw} on
$\blG_t$, the assumed uniform Poincar\'e inequality and volume growth $v(r)$ 
with $v(r)$ doubling yield the Nash profile bound 
for $N(\cdot)$ of \eqref{eq:nash-profile-vd}. Following the rest of 
the proof of Lemma \ref{P+VG-to-Nash}, while applying 
Lemma \ref{per-n}(a) instead of Theorem \ref{nash-inc}, we deduce that
the on-diagonal \abbr{ghku} bound \eqref{eq:diag-ubd-nash-rev} also holds here, 
for some $C'(A,\CP,\CV,\alpha_l)$ finite. Next, 
adapting the proof of Proposition \ref{prop:gauss-ubd}
we proceed to deduce for $T\ge 1$
the $1 \to \infty$ norm 
bound similar to \eqref{eq:op-s-norm} for 
the operator $K_{0,2T}^\theta$.
To this end, we use here the dual $\wh{K}_{s,t}^{\star}$  
of $K_{s,t}:L^{2}(\wh{\pi}_{s;T})\to L^{2}(\wh{\pi}_{t;T})$
for the re-scaled non-decreasing conductances $\wh{\pi}_{r;T}$
of \eqref{dfn:pi-hat}, and $r \ge T$.
Replacing \eqref{1-to-2}, we have 
\[
\left \Vert K_{T,2T}^\theta \right \Vert_{L^1(\pi_{2T}) \to L^2(\pi_T)} 
= 
e^{a_{2T}-a_T} \, \left \Vert K_{T,2T}^\theta \right \Vert_{L^1(\wh{\pi}_{2T;T}) 
\to L^2(\wh{\pi}_{T;T})}
\le e^{A} 
\left \Vert (\wh{K}^\star_{T,2T})^{-\theta} \right \Vert_{L^2(\wh{\pi}_{T;T}) \to 
L^\infty(\wh{\pi}_{2T;T})} 
\]
so it suffices
to bound via Proposition \ref{prop:hs}, 
the $L^{2}(\wh{\pi}_{T;T})\to L^{\infty}(\wh{\pi}_{2T;T})$
norm of $(\wh{K}_{T,2T}^{\star})^{-\theta}$ as in \eqref{duality}
and the
$L^{2}(\pi_T)\to L^{\infty}(\pi_{0})$ norm of $K_{0,T}^{\theta}$ as in \eqref{2-to-infty}. For the $2 \to \infty$
bound on $(\wh{K}_{T,2T}^{\star})^{-\theta}$, 
recall \eqref{dual-contr}
that since $r \mapsto \wh{\pi}_{r;T}$ are non-decreasing, 
$\| \wh{K}^\star_{s,t}\|_{L^\infty(\wh{\pi}_{s;T}) \to 
L^\infty(\wh{\pi}_{t;T})} \le 1$ and Lemma \ref{gaffney} provides 
the $2 \to 2$ analog of \eqref{2-to-2-star} for 
$(\wh{K}^\star_{s,t})^{-\theta}$. Further, the
$1\to\infty$ bound of \eqref{eq:diag-ubd-nash-rev} on $K_{s,t}$ 
yields the $2\to\infty$ bound \eqref{eq:on-diagonal} with 
$\varphi(\tau)=C' v(\sqrt{\tau})^{-1/2}$ and in conjunction 
with \eqref{1-to-infty-star} for $\wh{\pi}_{r;T}$, 
allows us to apply Proposition \ref{prop:hs} with 
this choice of $\varphi(\tau)$ for the 
adjoint operators $\wh{K}^\star_{s,t}$. Similarly to \eqref{duality}
it yields that for some $C_2(A)$ and $C_1$ finite,  
\begin{align*}
\Big\| \big(\wh{K}_{T,2T}^{\star}\big)^{-\theta}\Big\| _{L^{2}(\wh{\pi}_{T;T})\to L^{\infty}(\wh{\pi}_{2T;T})} 
\le C_2 v(\sqrt{T})^{-1/2} \exp\left(C_{1}\chi(\theta) T\right).
\end{align*}
Turning next to the $2\to\infty$ bound on $K_{0,T}^{\theta}$, 
utilizing the non-decreasing measures $u\mapsto\wh{\pi}_{u;v}$
Lemma \ref{gaffney} provides the $2\to2$ bound 
on $K_{v,u}^{\theta}$ with $\wh{\pi}_{u;v}$ replacing $\pi_u$
(and $a_u \equiv 0$). Thus,  
\begin{equation*}
\left\Vert K_{v,u}^{\theta}\right\Vert _{L^{2}(\pi_u)\to L^{2}(\pi_v)}=
e^{(a_u-a_v)/2}
\left\Vert K_{v,u}^{\theta}\right\Vert _{L^{2}(\wh{\pi}_{u;v})\to L^{2}(\wh{\pi}_{v;v})}\le \exp\big( a_u - a_v+\chi(\theta)(u-v) \big)
\,.
\end{equation*}
The non-decreasing $t \mapsto a_t$ satisfies \eqref{cond:pert},
so Proposition \ref{prop:hs} establishes the bound
\eqref{2-to-infty} which in turn yields the
heat kernel upper bound \eqref{ghku-pi} (see 
our proof of Theorem \ref{thm-main}(a)).

\medskip
\noindent
(b). As in the proof of Theorem \eqref{thm-main}(b) it suffices 
to establish the relevant \abbr{ghkl} for $K_{0,2T}$. 
To this end,  
thanks to \eqref{eq:ref-meas} and the assumed uniform volume growth
with $v(\cdot)$ doubling, we have that 
$\{\mu_{s,t}(\cdot):  s,t \in [T,2T] \}$ are $c$-stable with respect 
to $\pi(x) \equiv 1$ (for $c=c(\CV,A)$ and all $T$). Moreover, 
in part (a) we established the bound
\eqref{ghku-pi}, therefore $\{K_{s,s+t} : s,s+t \in [T,2T]\}$ 
satisfy the improved \abbr{ghku} of \eqref{eq:ubd-interm}.
The \abbr{phi} is invariant to the re-scaling \eqref{dfn:pi-hat}
and $a_t \le a_T + A$ for $t \in [T,2T]$. Hence, considering 
Theorem \ref{harnack} for $(K_{t},\widehat{\pi}_{t;T})$, yields for the 
\abbr{csrw} that $(K_{t},\pi_t)$ satisfy the \abbr{phi} on $[T,2T]$,
while for \abbr{dtrw} we assumed such \abbr{PHI}, as well
as uniformly elliptic conductances. Proceeding 
as in {\bf Step II} of the proof of Theorem \ref{thm-main}(b),
yields the \abbr{ghkl} of \eqref{ghke} for
some $\Cbu=\Cbu (\CP,\gamma,\CV,\bar \alpha,A)$ finite 
and $K_{T,2T}$ (omitting \abbr{wlog} the bounded factor $\mu_{T,2T}$). 
From the upper bound \eqref{ghku-pi} it further follows, 
as in \eqref{eq:pos}, that for some $\kappa(\CP,\CV,\alpha_l,A)$ finite,  
\begin{equation}\label{eq:lbd-k}
\sum_{z \in B(x,\kappa \sqrt{T})} K_{0,T}(x,z) \ge \frac{1}{2} \,,
\qquad \forall x \in V, \;\; T \ge 1 \,.
\end{equation}
With $d(z,y)^2 \le 2 d(x,z)^2 + 2 d(x,y)^2$, we get by
combining \eqref{ghke} and \eqref{eq:lbd-k}, that 
\begin{align*}
K_{0,2T}(x,y) & 
   \ge\frac{1}{2} \inf_{z\in\B(x, \kappa \sqrt{T})} \{ K_{T,2T}(z,y) \} 
 \ge \frac{e^{-2 \Cbu \kappa^2}}{2 \Cbu v(\sqrt{T})} e^{-2 \Cbu d(x,y)^2/T} \,,
\end{align*}
whenever $d(x,y)\le T-\kappa \sqrt{T}$.
With $\blG_t$ uniformly elliptic, 
increasing $\Cbu$ (in terms of $\alpha_e$),   
such \abbr{GHKL} extends to all 
$d(x,y) \in [T-\kappa \sqrt{T}, 2T]$ (as we have 
seen already after \eqref{eq:ghkl-mu}). 
\qed

\medskip
\begin{proof}[Proof of Proposition \ref{ppn:conv}] We refine 
the counter-example provided in \cite[Proposition 1.4]{HK} 
for $\bG=\Z$, by fixing $\eta,\delta_n \in (0,1/2)$, {$\delta_0=0$}
and setting the uniformly bounded
\begin{align*}
\pi_n(x,x+1) = 1 + (-1)^{n+x} \eta \,, \qquad \pi_n(x,x)=1 - (-1)^{n+x} 
\delta_n \,. 
\end{align*}
Then, $\pi_n(x)=3-(-1)^{n+x}\delta_n$ satisfy
\eqref{pertur-regime} with $a_{n+1} - a_n  
\le \frac{2}{5} (\delta_n+\delta_{n+1})$, and  
$K_n(x,y)$ of \eqref{n-kernel} satisfies
\eqref{dfn:unif-lazy}-\eqref{dfn:unif-elliptic}
with $\bar \alpha=1/7$. The process $\{X_n\}$
induces on types $A$ and $B$ that correspond to $n+X_n$ being even 
and odd, respectively, the in-homogeneous 
$\{A,B\}$-valued Markov chain of transition probabilities:
\begin{align*}
q_{n}(A,B) & =\frac{1-\delta_n}{3-\delta_n},\quad q_{n}(A,A)=\frac{2}{3-\delta_n},\\
q_{n}(B,A) & =\frac{1+\delta_n}{3+\delta_n},\quad q_{n}(B,B)=\frac{2}{3+\delta_n}.
\end{align*}
The uniformly bounded increments $X_{n+1}-X_n$ are zero 
on transitions between types $A$ and $B$ and otherwise 
they are $\pm 1$-valued of mean $\Delta_{n}(A)=\frac{2\eta}{3-\delta_n}$  
and $\Delta_{n}(B)=-\frac{2\eta}{3+\delta_n}$,
when at time $n$ the type is $A$ or $B$, respectively. 
Note that 
$$
v_n := \frac{1}{2} \sum_{i=1}^{n} ( \Delta_{i}(A) + \Delta_{i}(B)) 
\ge 
\frac{2 \eta}{9} a_n = O(n^{1/2+\iota}) \,,
$$ 
and starting at $X_0=0$ (i.e. at type $A$), since 
$q_{n}(B,A)>1/3>q_{n}(A,B)$ for all $n$, such $\{A,B\}$-valued 
Markov chain induces the drift $\bE X_n \ge v_n$. Thus, 
from the 
concentration of the pair-empirical 
$\{A,B\}$-valued measure around its limit $[2/6,1/6,1/6,2/6]$, 
we deduce that for some $C>0$,  
\begin{align}\label{eq:cnt-ghk}
p_n := \bP(|X_{n}|\le C n^{(1+\iota)/2})\le C^{-1} \exp(-C n^{\iota}) \,.
\end{align}
It is easy to check that if the lower bound of \eqref{ghk-lbd} held
for the uniformly bounded above and below $\pi_n(\cdot)$, then 
necessarily $\inf_n \{p_n\} >0$ in contradiction with \eqref{eq:cnt-ghk}.
Further, even if only the upper bound of \eqref{ghk-ubd} held 
for $\pi_n(\cdot)$, then since $\iota>0$ necessarily 
$\bP(|X_n| > C n^{(1+\iota)/2}) \to 0$, again contradicting 
\eqref{eq:cnt-ghk}. 
\end{proof}

\end{section}

\medskip
\noindent
{\bf{Acknowledgments.}} 
We thank Takashi Kumagai for proposing to
pursue the \abbr{GHK} in our aim to verify \cite[Conj. 7.1]{ABGK}.
This work further benefited from discussions (of A.D.) 
with Martin Barlow and (of T.Z.) with Laurent Saloff-Coste.

\end{document}